\let\oldsubset\subset
\newcommand{\subsetsim}{\mathrel{\ooalign{\raise0.175ex\hbox{$\oldsubset$}\cr\hidewidth\raise-0.9ex\hbox{\scalebox{0.9}{$\sim$}}\hidewidth\cr}}}
\def\thm@space@setup{
\thm@preskip=4mm
\thm@postskip=0mm
}
\renewenvironment{enumerate}{\begin{enumorig}[label=\textup{(\roman*)}, noitemsep, 
topsep=-7.5pt, labelindent=.2em, leftmargin=*, widest=iii,]}{\end{enumorig}}
\newenvironment{enumerateNum}{\begin{enumorig}
[label=\textup{(\arabic*)}, 
noitemsep, topsep=-7.5pt, labelindent=.2em, leftmargin=*, widest=iii]}{\end{enumorig}}
\newenvironment{enumerateAlpha}{\begin{enumorig}[label=\textup{(\alph*)}, noitemsep, 
topsep=0pt, labelindent=.2em, leftmargin=*, widest=iii]}{\end{enumorig}}
\DeclarePairedDelimiter\set{\{}{\}}
\DeclarePairedDelimiter\floor{\lfloor}{\rfloor}
\theoremstyle{plain}
\newtheorem{thm}{Theorem}
\newtheorem*{thm*}{Theorem}
\newtheorem{theorem}[thm]{Theorem}
\newtheorem{lem}[thm]{Lemma}
\newtheorem{lemma}[thm]{Lemma}
\newtheorem*{lemma*}{Lemma}
\newtheorem{cor}[thm]{Corollary}
\newtheorem*{cor*}{Corollary}
\newtheorem{obs}[thm]{Observation}
\theoremstyle{remark}
\newtheorem{ques}{Question}
\newtheorem*{claim}{Claim}
\crefname{obs}{Observation}{Observations}
\newtheorem*{lem*}{Lemma}
\theoremstyle{definition}
\newtheorem*{conj*}{Conjecture}
\crefname{lem}{Lemma}{Lemmas}
\crefname{thm}{Theorem}{Theorems}
\crefname{cor}{Corollary}{Corollaries}
\newenvironment{proofclaim}[1][]
	{\par\noindent {\it Proof}. }{ \hfill$\lozenge$\par\vspace{11pt}}
\newcommand{\utw}{\operatorname{utw}}
\newcommand{\td}{\operatorname{td}}
\newcommand{\pw}{\operatorname{pw}}
\newcommand{\tw}{\operatorname{tw}}
\newcommand{\cgB}{\mathcal{B}}
\newcommand{\cgF}{\mathcal{F}}
\newcommand{\cgG}{\mathcal{G}} 
\newcommand{\cgI}{\mathcal{I}}
\newcommand{\cgL}{\mathcal{L}}
\newcommand{\cgM}{\mathcal{M}} 
\newcommand{\cgN}{\mathcal{N}}
\newcommand{\Oh}{\mathcal{O}}
\newcommand{\cgP}{\mathcal{P}} 
\newcommand{\cgR}{\mathcal{R}}
\newcommand{\cgS}{\mathcal{S}}
\newcommand{\bigOh}{\mathcal{O}}
\newcommand{\torso}{\mathrm{torso}}
\newcommand{\dist}{\mathrm{dist}}
\let\le\leqslant
\let\leq\leqslant
\let\geq\geqslant
\let\subset\subseteq
\let\epsilon\varepsilon
\let\epsi\varepsilon
\let\setminus-
\DeclareMathOperator\WReach{WReach}
\DeclareMathOperator\wcol{wcol}
\DeclareMathOperator\len{len}
\DeclareMathOperator\inter{int}
\let\old@setaddresses\@setaddresses
\def\@setaddresses{\bigskip\bgroup\parindent 0pt\let\scshape\relax\old@setaddresses\egroup}
\newcommand{\piotr}[1]{{\color{orange} Piotr: #1}}
\title{The Grid-Minor Theorem Revisited}
\begin{document}
\author[Dujmovi{\'c}]{Vida Dujmovi{\'c}}
\address[V.~Dujmovi{\'c}]{School of Computer Science and Electrical Engineering, University of Ottawa, Ottawa, Canada}
\email{vida.dujmovic@uottawa.ca}

\author[Hickingbotham]{Robert Hickingbotham}
\email{robert.hickingbotham@monash.edu}

\author[Hodor]{Jędrzej Hodor}
\address[J.~Hodor,\ H.~La,\ P.~Micek]{Theoretical Computer Science Department, 
Faculty of Mathematics and Computer Science, Jagiellonian University, Krak\'ow, Poland}
\email{jedrzej.hodor@gmail.com}

\author[Joret]{Gwena\"el Joret}
\address[G.~Joret]{D\'epartement d'Informatique, Universit\'e libre de Bruxelles, Belgium}
\email{gwenael.joret@ulb.be}

\author[La]{Hoang La}
\email{hoang.la.research@gmail.com}

\author[Micek]{Piotr Micek}
\email{piotr.micek@uj.edu.pl}

\author[Morin]{Pat Morin}
\address[P.~Morin]{School of Computer Science, Carleton University, Ottawa, Canada}
\email{morin@scs.carleton.ca}

\author[Rambaud]{Clément Rambaud}
\address[C.~Rambaud]{DIENS, \'Ecole Normale Sup\'erieure, CNRS, PSL University, Paris, France}
\email{clement.rambaud@ens.psl.eu}

\author[Wood]{David R.\ Wood}
\address[R. Hickingbotham, D.R.~Wood]{School of Mathematics, Monash University, Melbourne, Australia}
\email{david.wood@monash.edu}

\renewcommand{\shortauthors}{Dujmovi{\'c}, Hickingbotham, Hodor, Joret, La, Micek, Morin, Rambaud, Wood} 

\thanks{J.\ Hodor, H.\ La, and P.\ Micek are supported by the National Science Center of Poland under grant UMO-2018/31/G/ST1/03718 within the BEETHOVEN program. 
G.\ Joret is supported by a CDR grant from the Belgian National Fund for Scientific Research (FNRS), by a PDR grant from FNRS, and by the Australian Research Council. Research of R.~Hickingbotham is supported by an  Australian Government Research Training Program Scholarship. Research of V.~Dujmovi\'c is supported by NSERC, and a Gordon Preston Fellowship from the School of Mathematics at Monash University. Research of D.R.~Wood is supported by the Australian Research Council. }

\maketitle

\begin{abstract} 
We prove that for every planar graph $X$ of treedepth $h$, there exists a positive integer $c$ such that 
for every $X$-minor-free graph $G$, 
there exists a graph $H$ of treewidth at most $f(h)$ such that $G$ is isomorphic to a subgraph of $H\boxtimes K_c$.
This is a qualitative strengthening of the Grid-Minor Theorem of Robertson and Seymour~(JCTB, 1986), and treedepth is the optimal parameter in such a result. 
As an example application, 
we use this result to 
improve the upper bound for weak coloring numbers of graphs excluding a fixed graph as a minor. 
\end{abstract}

\newpage


\newpage

\section{Introduction}
\label{sec:introduction}
The seminal \emph{Graph Minors} series of Robertson and Seymour is the foundation of modern structural graph theory. In this work, treewidth is a central concept that measures how similar a given graph is to a tree. A key theorem of Robertson and Seymour~\citep{RS86V} states  that a minor-closed graph class $\mathcal{G}$ has bounded treewidth if and only if some planar graph is not in $\mathcal{G}$. In particular, for every planar graph $X$, every $X$-minor-free graph has treewidth at most some function $g(X)$. This result is often called the Grid-Minor Theorem since it suffices to prove it when $X$ is a planar grid. The asymptotics of $g$ have been substantially improved since the original work. Most significantly, Chekuri and Chuzhoy~\citep{CC16} showed that $g$ can be chosen to be polynomial in $|V(X)|$. The current best bound is $g(X)\in \widetilde\bigOh(|V(X)|^9)$, which follows from a result of Chuzhoy and Tan~\citep{CT21}. Dependence on $|V(X)|$ is unavoidable, since the complete graph on $|V(X)|-1$ vertices is $X$-minor-free, but has treewidth $|V(X)|-2$. Our goal is to prove a qualitative strengthening of the Grid-Minor Theorem via graph product structure theory.

Graph product structure theory describes graphs in complicated classes as subgraphs of strong products\footnote{The \emph{strong product} $G_1\boxtimes G_2$ of two graphs $G_1$ and $G_2$ is the graph with vertex set $V(G_1\boxtimes G_2):=V(G_1)\times V(G_2)$ and that includes the edge with endpoints $(v,x)$ and $(w,y)$ if and only if
$vw\in E(G_1)$ and $x=y$; $v=w$ and $xy\in E(G_2)$; or
$vw\in E(G_1)$ and $xy\in E(G_2)$.} of simpler graphs. For example, the Planar Graph Product Structure Theorem by Dujmović, Joret, Micek, Morin, Ueckerdt, and Wood~\citep{DJMMUW20} says that for every planar graph $G$ there is a graph $H$ of treewidth at most $3$ and a path $P$ such that $G\subsetsim H \boxtimes P \boxtimes K_3$. Here, $G_1\subsetsim G_2$ means that $G_1$ is isomorphic to a subgraph of $G_2$.

Inspired by this viewpoint, we prove the following product structure extension of the Grid-Minor Theorem. Note that $H\boxtimes K_c$ is the graph obtained from $H$ by `blowing-up' each vertex of $H$ by a complete graph $K_c$.
Let $\tw(G)$ denote the treewidth of a graph $G$, 
and let $\td(G)$ denote the treedepth of $G$ 
(both defined in \Cref{sec:prelim}).

\begin{thm}
\label{thm:selling-point}
For every planar graph $X$, there exists a positive integer $c$ such that  for every $X$-minor-free graph $G$,
there exists a graph $H$ of treewidth at most $2^{\td(X)+1}-4$ such that $G\subsetsim H\boxtimes K_c$. 
\end{thm}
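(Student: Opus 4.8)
The plan is to proceed by induction on $\td(X)$, exploiting the recursive definition of treedepth: either $X$ is disconnected, in which case we handle each component separately and combine; or $X$ is connected and there is a vertex $v$ such that $\td(X-v)=\td(X)-1$. Since $X$ is planar, $X-v$ decomposes into (planar) connected components $X_1,\dots,X_t$, each of treedepth at most $\td(X)-1$. For a graph $G$, look at its decomposition into a tree-of-torsos via a suitable $2$-separation or clique-sum structure, or better, use the fact that an $X$-minor-free graph that is ``close to'' being $X_i$-minor-free for each $i$ can be peeled apart. The key tension is that $X$-minor-freeness is not the conjunction of $X_i$-minor-freeness, so we need a more global argument.

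Concretely, I expect the heart of the proof to go through a layered/BFS argument reminiscent of the proof that bounded-treedepth excluded minors force bounded layered treewidth or bounded ``local treewidth''. Root a BFS in each component of $G$; the treedepth bound on $X$ should translate (via a Ramsey-type or excluded-minor argument on BFS layers) into the statement that $G$ has a tree-decomposition in which every bag, when intersected with any single BFS layer, has bounded size — i.e.\ bounded ``layered treewidth'' with the layer width controlled by a function of $\td(X)$ alone, not just $|V(X)|$. Once $G$ has layered treewidth at most $k$ with respect to a BFS layering, a standard construction (as in the product-structure literature, e.g.\ the arguments behind \citep{DJMMUW20}) yields $H$ of treewidth at most roughly $3k-1$ or similar and a path $P$ with $G \subsetsim H \boxtimes P$; absorbing the path into the clique factor and tightening constants should give the claimed bound $\tw(H)\le 2^{\td(X)+1}-4$ with $c$ depending on $X$.

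The technical engine producing the bound $2^{\td(X)+1}-4$ will presumably be a clean inductive lemma of the form: if $G$ excludes a planar $X$ with $\td(X)\le h$, then $G$ admits a tree-decomposition whose torsos are each ``$(h-1)$-type'' graphs blown up by a constant, and treewidth accumulates as $w(h) \le 2w(h-1)+$ constant, which solves to something of the shape $2^{h+1}-c'$. The base case $\td(X)=1$ (so $X=K_1$, meaning $G$ is edgeless) or $\td(X)=2$ (so $X$ is a star forest, and $X$-minor-free graphs have bounded degree / bounded components after removing a bounded set) should be checked directly and is where the ``$-4$'' gets calibrated.

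The main obstacle, I expect, is the passage from ``excludes planar $X$ of small treedepth'' to ``bounded layered treewidth'' with the layer-width bounded by a \emph{function of $\td(X)$ only}. The Grid-Minor Theorem gives bounded treewidth but with dependence on $|V(X)|$; squeezing out a treedepth-only bound on each layer requires exploiting that a planar graph of treedepth $h$ appears as a minor inside any sufficiently ``tall and locally wide'' structure — essentially a refined grid-minor/well-linked-set argument sensitive to the height rather than the size of the excluded minor. Handling disconnected $X$ and ensuring the inductive decomposition genuinely keeps treewidth bounded (rather than merely pathwidth or local treewidth) at each clique-sum step will also need care, but I view that as routine bookkeeping by comparison.
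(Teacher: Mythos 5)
Your proposal takes a genuinely different route from the paper, and as written it has two real gaps. The first is the one you yourself flag as ``the main obstacle'': the claim that excluding a planar minor $X$ forces a tree-decomposition in which every bag meets every BFS layer in at most $f(\td(X))$ vertices. That is where essentially all of the work would have to live, and you offer only the hope of a ``refined grid-minor/well-linked-set argument sensitive to the height rather than the size of the excluded minor'' without supplying it. The paper never touches BFS layerings for this theorem. It invokes the Grid-Minor Theorem as a black box to reduce to graphs $G$ with $\tw(G)<t$, where $t$ may depend on all of $X$ --- this is harmless because $t$ is only allowed to inflate the clique factor $K_{ct}$ in \Cref{XMinorFreeProduct}, never $\tw(H)$. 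It then replaces $X$ by the universal treedepth-$h$ graph $U_{h,d}$ and runs an induction on $(h,|V(G)|)$ carrying a small ``interface'' of root sets $R_1,\dots,R_\ell$. The engine is a decomposition lemma for graphs with no $R$-attached model of $K_a\oplus U_{h,d}$ (\Cref{lemma:find_all_cuts_and_the_simple_part_in_the_middle}), proved via a Kawarabayashi-style rooted-minor alternative (\Cref{lemma:find_attached_minors_or_cuts_preserve_model}), together with the Helly property of tree-decompositions (\Cref{lemma:helly_property_tree_decomposition}), which yields a hitting set of $O(t)$ vertices meeting every attached model of $K_{k+1}\oplus U_{h-1,d}$; deleting it drops $h$ by one at the cost of roughly doubling the interface, which is exactly what produces the recursion $\tau(h,k)=\tau(h-1,2k+1)+k+1$ and hence $2^{h+1}-4$. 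Your guessed recursion $w(h)\le 2w(h-1)+\mathrm{const}$ has the right shape, but you have no mechanism that generates it, and in particular no reason why the quantity that doubles should be an interface size rather than a treewidth.

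The second gap is the step ``once $G$ has layered treewidth at most $k$, a standard construction yields $H$ of treewidth roughly $3k-1$ and a path $P$ with $G\subsetsim H\boxtimes P$.'' No such construction exists: bounded layered treewidth does not imply a bounded value of the minimum $\tw(H)$ over representations $G\subsetsim H\boxtimes P\boxtimes K_c$ (these two parameters are known to be separated), and the product structure theorems you allude to are proved with far heavier tools than a layering plus a per-layer width bound --- indeed the paper's own apex-minor-free product structure in \Cref{sec:excluding_an_apex} has to go through the Graph Minor Structure Theorem. So even granting your first claim, this step would fail as stated. By contrast, absorbing the path factor is the easy part and is not needed at all in the paper's argument, since \Cref{XMinorFreeProduct} produces no path factor in the first place.
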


The point of Theorem~\ref{thm:selling-point} is that the treewidth of $H$ only depends on the treedepth of $X$, not on $|V(X)|$
\footnote{Arbitrarily large graphs can have bounded treedepth, such as edgeless graphs (treedepth 1) or stars (treedepth 2).}. The described product structure of $G$ is a more refined description of $G$ compared to the output of the Grid-Minor Theorem
since $\tw(H\boxtimes K_c)\leq (\tw(H)+1)c-1$.
This refinement is useful because various graph parameters can be bounded on $H\boxtimes K_c$ by a fast-growing function of $\tw(H)$ times a slow-growing (usually linear) function of $c$; this includes queue-number~\cite{DJMMUW20}, nonrepetitive chromatic number~\cite{DEJWW20}, 
and others \cite{DFMS21,BDJM23}. 
As concrete applications of \cref{thm:selling-point}, we use it to improve bounds for weak coloring numbers and $p$-centered colorings of $X$-minor-free graphs.

The Grid-Minor Theorem relates to treewidth in the same way as the Excluded-Tree-Minor Theorem by Robertson and Seymour~\cite{GM1} relates to pathwidth.
The latter says that a minor-closed class $\mathcal{G}$ has bounded pathwidth if and only if some tree is not in $\mathcal{G}$. In particular, there is a function $g$ such that for every tree $X$, every $X$-minor-free graph has pathwidth at most $g(|V(X)|)$.
The following product structure version of this result was proved by Dujmović, Hickingbotham, Joret, Micek, Morin, and Wood~\cite{DHJMMW23}: there exists a function $f$ such that 
for every tree $X$, there exists a positive integer $c$ such that for every $X$-minor-free graph $G$,
there exists a graph $H$ of pathwidth at most $f(\td(X))$ such that 
$G\subsetsim H\boxtimes K_c$. 
(In fact, they prove a stronger statement in which the pathwidth of $H$ is bounded by $2h-1$, where $h$ is the radius of $X$.) 

We actually prove the following result for an arbitrary excluded minor, which combined with the Grid-Minor Theorem immediately implies \cref{thm:selling-point}.

\begin{thm}
\label{XMinorFreeProduct}
For every graph $X$, there exists a positive integer $c$ such that for every positive integer $t$ and for every $X$-minor-free graph $G$ with $\tw(G)<t$,  there exists a graph $H$ of treewidth at most $2^{\td(X)+1}-4$ such that 
$G\subsetsim H\boxtimes K_{ct}$.
\end{thm}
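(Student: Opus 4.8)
The plan is to prove \cref{XMinorFreeProduct} by induction on $\td(X)$, with a secondary induction on $|V(X)|$, writing $f(h):=2^{h+1}-4$ so that the identity $f(h)=2f(h-1)+4$ drives the inductive step. In the base case $\td(X)\le 1$ the graph $X$ is edgeless, say on $k$ vertices, so every $X$-minor-free graph $G$ has fewer than $k$ vertices and $G\subsetsim K_{k-1}\subsetsim K_1\boxtimes K_{(k-1)t}$ with $\tw(K_1)=0=f(1)$; take $c:=k$. If $X$ is disconnected, we reduce to the connected case by a standard argument: an $X$-minor-free graph has a tree-decomposition of adhesion less than $|V(X)|$ whose torsos each exclude some graph simpler than $X$ (smaller treedepth, or the same treedepth and fewer vertices), and the product structures produced for the torsos by induction are glued along this tree-decomposition; the point is that, since $c$ is allowed to depend on $X$, the bounded adhesion sets are absorbed into the blow-up factor $K_{ct}$ and so do not contribute to $\tw(H)$. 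Hence we may assume that $X$ is connected, with $h:=\td(X)\ge 2$ and (the case $X=K_2$ being trivial) $|V(X)|\ge 3$.

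Since $X$ is connected with at least two vertices, there is $v_0\in V(X)$ with $\td(X-v_0)=h-1$; set $X':=X-v_0$, so $|V(X')|\ge 2$. Let $G$ be a connected $X$-minor-free graph with $\tw(G)<t$ (disconnected $G$ is handled one component at a time), fix a vertex $r$, and let $L_0=\{r\},L_1,L_2,\dots$ be the BFS layering of $G$ from $r$; thus every vertex of $L_i$ with $i\ge1$ has a neighbour in $L_{i-1}$, and $G[L_0\cup\dots\cup L_{i-1}]$ is connected. The heart of the induction is:
\begin{claim}
$G[L_i]$ is $X'$-minor-free for every $i\ge 0$.
\end{claim}
For $i=0$ this holds since $|V(X')|\ge 2$. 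For $i\ge 1$, if $\{A_u:u\in V(X')\}$ were a model of $X'$ in $G[L_i]$, then $B:=G[L_0\cup\dots\cup L_{i-1}]$ is connected, disjoint from every $A_u$, and adjacent to every $A_u$ (each vertex of $L_i$ has a neighbour in $L_{i-1}\subseteq B$); so the $A_u$ together with $B$ form, in $G$, a model of the graph obtained from $X'$ by adding one vertex adjacent to all of $V(X')$, which contains $X$ as a subgraph because $N_X(v_0)\subseteq V(X')$ --- contradicting $X$-minor-freeness of $G$. Since also $\tw(G[L_i])\le\tw(G)<t$, the induction hypothesis applied to $X'$ (for which $\td(X')=h-1$) provides a constant $c'=c'(X')$ and graphs $H_i$ with $\tw(H_i)\le f(h-1)$ and $G[L_i]\subsetsim H_i\boxtimes K_{c't}$.

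The remaining task --- and the main obstacle --- is to combine the $G[L_i]\subsetsim H_i\boxtimes K_{c't}$ into a single $H$ with $\tw(H)\le 2f(h-1)+4=f(h)$ and $G\subsetsim H\boxtimes K_{c't}$. Each layer structure gives a partition of $L_i$ into parts of size at most $c't$, and the union of these over all $i$ is a partition $\mathcal Q$ of $V(G)$ whose quotient $R:=G/\mathcal Q$ already satisfies $G\subsetsim R\boxtimes K_{c't}$; so it suffices to show $\tw(R)\le f(h)$ and then take $H:=R$. On each layer $R$ embeds into $H_i$, so the only danger is the bipartite ``interface'' between the parts of consecutive layers $L_i$ and $L_{i+1}$: chosen carelessly, the per-layer structures can make this interface a large complete bipartite graph and $\tw(R)$ unbounded. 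To prevent this, the inductive statement must be strengthened so that the partition of $G$ is produced compatibly with a prescribed tree-decomposition of $G$ of width less than $t$ --- for instance, so that every part meets a connected set of bags --- and one then feeds the restriction of one fixed such tree-decomposition $(T,\mathcal B)$ of $G$ into all recursive calls, forcing all the layer partitions, and hence $R$, to be organised by the single tree $T$. Merging, along $T$, the width-$f(h-1)$ tree-decomposition of $R$ that reflects the $H_i$'s with the one coming from $(T,\mathcal B)$ adds their widths plus $1$, giving $\tw(R)\le 2f(h-1)+\bigOh(1)$; tightening the additive constant yields exactly $f(h)$, while the partition $\mathcal Q$ already provides the blow-up factor $K_{c't}$, linear in $t$.

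I expect this combining step --- and specifically the formulation of a compatibility condition between the product structure and a width-less-than-$t$ tree-decomposition of $G$ that is simultaneously preserved by the recursion and strong enough to control $\tw(R)$ --- to be the delicate technical core. The layering claim above, and the reduction of disconnected $X$ to the connected case, should be comparatively routine.
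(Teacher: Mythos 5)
Your layering claim is correct and is essentially the classical argument of Ossona de Mendez--Oum--Wood and Norin--Scott--Seymour--Wood: a BFS layer of a connected $X$-minor-free graph excludes $X-v_0$ because the earlier layers form a connected dominating set that could serve as the branch set of $v_0$. That argument is exactly what yields the known \emph{clustered coloring} result $G\subsetsim H\boxtimes K_{ct}$ with $\chi(H)\le 2^{h}-2$, since for chromatic number one never needs to control how parts in layer $i$ attach to parts in layer $i+1$. But the step you defer --- combining the per-layer structures into a single $H$ with \emph{bounded treewidth} --- is not a routine gluing; it is the entire content of the theorem, and the mechanism you sketch for it does not work. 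If $(T,\mathcal B)$ is a tree-decomposition of $G$ of width less than $t$, then a single bag can meet up to $t$ distinct parts of $\mathcal Q$, so the decomposition of $R=G/\mathcal Q$ "coming from $(T,\mathcal B)$" has width up to $t-1$, and any merge with the width-$f(h-1)$ decompositions of the $H_i$ inherits a dependence on $t$. No compatibility condition of the form "each part meets a connected set of bags" can repair this: the obstruction is quantitative (a part of $L_{i+1}$ may be adjacent to $\Omega(t)$ parts of $L_i$), not topological, and the whole point of the theorem is that $\tw(H)$ must depend only on $h$.

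The paper's proof avoids layering altogether for precisely this reason. It replaces "exclude $X'$ in each layer" by the stronger invariant "exclude $K_k\oplus U_{h,d}$ with a prescribed clique interface $R_1,\dots,R_k$", and the recursion never hands a whole layer to a subproblem: using attached models (\Cref{lemma:find_attached_minors_or_cuts_preserve_model}), the Helly property of tree-decompositions (\Cref{lemma:helly_property_tree_decomposition}), and the decomposition lemma (\Cref{lemma:find_all_cuts_and_the_simple_part_in_the_middle}), it extracts a \emph{single part} $X$ of size $\bigOh_{h,d}(t)$ whose removal splits $G$ into a core excluding $K_{2k+1}\oplus U_{h-1,d+2k}$ (inducting on $h$, trading $h\to h-1$ against $k\to 2k+1$) and components attached to the rest through cliques of at most $k$ parts (inducting on $|V(G)|$). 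Every interface between recursive pieces is a clique of $\bigOh(2^h)$ vertices of $H$, which is why the clique-sums keep $\tw(H)\le\tau(h,k)$. Your Claim corresponds to the step where $d$ disjoint $R$-attached models of $K_{k+1}\oplus U_{h-1,d}$ are combined into a model of $K_k\oplus U_{h,d}$ (\Cref{fig:combining:minors}), but the surrounding architecture --- in particular how one bounds the interface of each recursive call --- is missing from your proposal, and I do not see how to supply it along the BFS-layering route.
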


\Cref{XMinorFreeProduct} was inspired by and can be restated in terms of the parameter `underlying treewidth' introduced by Campbell, Clinch, Distel, Gollin, Hendrey, Hickingbotham, Huynh, Illingworth, Tamitegama, Tan, and Wood~\cite{CCDGHHHITTW22}. They defined the \emph{underlying treewidth} of a graph class $\mathcal{G}$, denoted by $\utw(\mathcal{G})$, to be 
the smallest integer such that, for some function $f$, for every graph $G\in \cgG$ there is a graph $H$ of treewidth at most $\utw(\mathcal{G})$ such that $G\subsetsim H \boxtimes K_{f(\tw(G))}$. Here, $f$ is called the \emph{treewidth binding function}. 
Campbell~et~al.~\cite{CCDGHHHITTW22} showed that the underlying treewidth of the class of planar graphs equals 3, and the same holds for any fixed surface. More generally, let $\mathcal{G}_X$ be the class of graphs excluding a given graph $X$ as a minor.  Campbell~et~al.~\cite{CCDGHHHITTW22} showed that $\utw(\mathcal{G}_{K_t})=t-2$ and
$\utw(\mathcal{G}_{K_{s,t}})=s$ (for $t\geq\max\{s,3\})$. In these results, the treewidth binding function is quadratic. Illingworth, Scott and Wood~\cite{illingworth2022product} reproved these results with a linear treewidth binding function.  

Determining the underlying treewidth of the class of $X$-minor-free graphs, for an arbitrary graph $X$, was one of the main problems left unsolved by Campbell~et~al.~\cite{CCDGHHHITTW22} and Illingworth, Scott and Wood~\cite{illingworth2022product}. 
\Cref{XMinorFreeProduct} together with a well-known lower bound construction given in~\Cref{LowerBounds} shows that $\utw(\cgG_X)$ and $\td(X)$ are tied:
\begin{equation}\label{eq:bound_on_f}
    \td(X)-2\leq \utw(\cgG_X) \leq 2^{\td(X)+1}-4.
\end{equation}
This shows that treedepth is the right parameter to consider in \cref{thm:selling-point,XMinorFreeProduct}.
Moreover, in the upper bound the treewidth binding function is linear. 

\textbf{Application \#1. Weak Coloring Numbers.}
Weak coloring numbers are a family of graph parameters studied extensively in  structural and algorithmic graph theory. 
See the book by Nešetřil and Ossona de Mendez~\cite{NOM2012}, or the recent lecture notes by Pilipczuk, Pilipczuk, and Siebertz~\cite{notes} for more information on this topic.
For the algorithmic side, see Dvo\v{r}\'{a}k~\cite{Dvorak13} and also Theorem 5.2 in~\cite{notes}, which contains a polynomial-time approximation algorithm for $r$-dominating set, with approximation ratio bounded by a function of a weak coloring number of the input graph.
We now quickly introduce the definition. 
The \emph{length} of a path is the number of its edges.
For two vertices $u$ and $v$ in a graph $G$, a $u$--$v$ \emph{path} is a path in $G$ with endpoints $u$ and $v$. 
Let $G$ be a graph and let $\sigma$ be an ordering of the vertices of $G$. 
For an integer $r\geq0$ and two vertices $u$ and $v$ of $G$, we say that
$u$ is \emph{weakly $r$-reachable} from $v$ in $\sigma$, if there exists
a $u$--$v$ path of length at most $r$ 
such that for every vertex $w$ on the path, $u\leq_{\sigma} w$.
The set of vertices that are weakly $r$-reachable from a vertex $v$ in $\sigma$ is denoted by $\WReach_r[G, \sigma, v]$. 
The $r$-\emph{th weak coloring number} of $G$, denoted by $\wcol_r(G)$, is defined as
\[
\wcol_r(G) = \min_{\sigma}\max_{v \in V(G)}\ |\WReach_r[G, \sigma, v]|.
\]
where $\sigma$ ranges over the set of all vertex orderings of $G$.
Several papers give bounds for weak coloring numbers of graphs in a given sparse class. For example, if $G$ is planar, then $\wcol_r(G)=\Oh(r^3)$; 
if $G$ has no $K_t$-minor, then $\wcol_r(G)=\Oh(r^{t-1})$ as proved by van den Heuvel, Ossona de Mendez, Quiroz, Rabinovich, and Siebertz~\cite{vdHetal17}; 
and if $\tw(G)\leq t$, then 
$\wcol_r(G)\leq \binom{r+t}{t}$ and this bound is tight as proved by Grohe, Kreutzer, Rabinovich,
Siebertz, and Stavropoulos~\cite{Grohe15}. 

Fix a planar graph $X$. What is known about $\wcol_r(G)$ when $X$ is not a minor of $G$?
Since $G$ is $K_{|V(X)|}$-minor-free, we have $\wcol_r(G) = \Oh(r^{|V(X)|-1})$.
However, thanks to~\Cref{thm:selling-point}, 
there exists a graph $H$ with $\tw(H)\leq f(\td(X)) =\Oh(2^{\td(X)})$ and $c$ depending only on $X$ such that 
$G\subsetsim H\boxtimes K_c$. Therefore,
\[
\wcol_r(G) \leq \wcol_r(H\boxtimes K_c) \leq c\cdot\wcol_r(H) \leq c\cdot r^{f(\td(X))}.
\]
Indeed, the first inequality follows from the monotonicity of $\wcol_r$ and the second inequality is an easy property\footnote{
Let $\sigma_H$ be an ordering of $V(H)$ such that $|\WReach_r[H,\sigma,x]| \leq \wcol_r(H)$ for every $x \in V(H)$.
Consider $\sigma$ an ordering of $V(H \boxtimes K_c)$ such that for every $(x,u), (x',u') \in V(H \boxtimes K_x)$ 
if $x <_{\sigma_H} x'$, then $(x,u) <_\sigma (x',u')$.
It is easy to see that $\WReach_r[H\boxtimes K_c,\sigma,(x,u)] \subseteq \{(y,v) \in V(H \boxtimes K_c) \mid y \in \WReach_r[H,\sigma_H,x]\}$, and so $|\WReach_r[H \boxtimes K_c, \sigma, u]| \leq c \cdot \wcol_r(H)$.
} of $\wcol_r$. 
The obtained upper bound on $\wcol_r(G)$ is polynomial in $r$, where the exponent depends only on $\td(X)$ and not on $|V(X)|$.

As mentioned, the Grid-Minor Theorem and also~\Cref{thm:selling-point} hold only 
when the excluded minor $X$ is planar. 
However, \cref{XMinorFreeProduct} does not have this restriction, hence there is no obvious obstacle for the above bound on $\wcol_r(G)$ to hold for all graphs $X$. 
We prove that this is indeed the case, which is the second main contribution of this paper.
\begin{restatable}{theorem}{weakcol}
\label{thm-wcol-main}
There exists a function $g$ such that
for every graph $X$, 
there exists a constant $c$ such that 
for every $X$-minor-free graph $G$ and every positive integer $r$, 
\[
\wcol_r(G) \leq c\cdot r^{g(\td(X))}.
\]
\end{restatable}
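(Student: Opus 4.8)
Since $\wcol_r$ is monotone under subgraphs and equals the maximum over connected components, we may assume $G$ is connected. The proof uses four facts: (i) monotonicity under subgraphs; (ii) $\wcol_r(H\boxtimes K_c)\le c\cdot\wcol_r(H)$, as in the footnote above; (iii) $\wcol_r(H)\le\binom{r+\tw(H)}{\tw(H)}$~\cite{Grohe15}, so a graph of bounded treewidth has $\wcol_r$ polynomial in $r$; and (iv) if $A\subseteq V(G)$ with $|A|\le a$, then $\wcol_r(G)\le\wcol_r(G-A)+a$ --- order $A$ first, then extend by an optimal ordering of $G-A$, and observe that any path witnessing weak $r$-reachability of a vertex outside $A$ either avoids $A$ entirely or has its minimum endpoint in $A$ --- so bounded sets of apex vertices are essentially free. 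Combining (i)--(iii) with \cref{XMinorFreeProduct} already gives $\wcol_r(G)\le c\,t\binom{r+k}{k}$ whenever $\tw(G)<t$, with $k=2^{\td(X)+1}-4$; this handles every $G$ with $\tw(G)\le r$, say, so the real issue is graphs of large treewidth, which for non-planar $X$ genuinely occur and for which the factor $t=\tw(G)+1$ is fatal.

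The combinatorial core is a \emph{composition lemma}: if $G$ has a tree-decomposition $(T,\beta)$ of adhesion at most $a$ in which every torso $H_x$ satisfies $\wcol_r(H_x)\le W$, then $\wcol_r(G)\le F(a,r)\cdot(W+a)$ for some $F$ polynomial in $r$, with $a$ appearing only as a multiplicative factor. To prove it, root $T$; for each node $x$ choose an ordering $\sigma_x$ of $V(H_x)$ that begins with the adhesion set shared with the parent (by (iv) one can keep the reachability count at most $W+a$) and agrees with the parent's ordering there. Order $V(G)$ by a breadth-first traversal of $T$, listing at each node $x$ the vertices of $\beta_x$ not in its parent consecutively in $\sigma_x$-order. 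Then for any $v$, a path of length at most $r$ witnessing weak $r$-reachability ends at a vertex that is new at $v$'s node or at one of its at most $r+1$ nearest ancestors (crossing each intervening adhesion set costs an edge), and, since every adhesion set --- including the child adhesions stored inside a torso --- is a clique of that torso, excursions of the path into subtrees can be shortcut so that within each of these $O(r)$ torsos the path witnesses a weak $r$-reachability relation from one of the at most $a+1$ entry points; hence $|\WReach_r[G,\sigma,v]|=O(r\cdot a\cdot(W+a))$.

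It remains to find a suitable tree-decomposition. By the Graph Minor Structure Theorem of Robertson and Seymour, $G$ has a tree-decomposition of adhesion at most $a(X)$ whose torsos are each $k$-almost-embeddable for some $k=k(X)$: a graph of Euler genus at most $k$ with at most $k$ vortices of width at most $k$, plus at most $k$ apices. Fix such a torso $H_x$; delete its apices, costing only $k$ by (iv). The remainder has bounded Euler genus with boundedly many bounded-width vortices, and for such graphs there is a product structure $\subsetsim H'\boxtimes P\boxtimes K_{c(k)}$ with $P$ a path and $\tw(H')$ bounded by an absolute constant --- this is the Planar Graph Product Structure Theorem~\cite{DJMMUW20} together with its extensions to surfaces and to vortices, the key point being that the unbounded treewidth is absorbed into the path factor $P$, which is harmless since $\wcol_r(H'\boxtimes P)=O(r^{\tw(H')+1})$. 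Hence $\wcol_r(H_x)\le k+c(k)\cdot O(r^{\tw(H')+1})$: polynomial in $r$, with an absolute constant exponent and a constant depending only on $X$. Plugging $W=\max_x\wcol_r(H_x)$ into the composition lemma yields $\wcol_r(G)\le c\cdot r^{C}$ with $C$ an absolute constant and $c$ depending only on $X$; combined with the planar case via \cref{thm:selling-point}, the theorem follows with $g(h)=\max\{2^{h+1}-4,\,C\}$.

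The main obstacle is the composition lemma: making the breadth-first ordering and the shortcutting argument precise, and especially verifying that the adhesion $a$ --- which the Graph Minor Structure Theorem bounds only by a function of $|V(X)|$, not of $\td(X)$ --- contributes only to the multiplicative constant and never to the exponent. A secondary point is pinning down the polynomial $\wcol_r$ bound for bounded-genus graphs with bounded vortices so that the vortex width does not leak into the exponent; channelling the vortices through the path factor of the product structure is the cleanest way to achieve this.
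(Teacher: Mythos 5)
There is a genuine gap, and it is fatal: your ``composition lemma'' is false as stated. If it held, then together with your (correct) torso bounds it would give $\wcol_r(G)\le c\cdot r^{C}$ for an \emph{absolute} constant $C$ for all $X$-minor-free graphs, which contradicts known lower bounds. Concretely, take a tree-decomposition of width $t$ of one of the graphs $G_{r,t}$ of Grohe, Kreutzer, Rabinovich, Siebertz, and Stavropoulos~\cite{Grohe15}: the adhesion is at most $t$ and every torso has at most $t+1$ vertices, so $W\le t+1$, and your lemma would give $\wcol_r(G_{r,t})=O(r\cdot t\cdot(W+t))=O(r\,t^2)$, whereas in fact $\wcol_r(G_{r,t})=\Omega(r^{t})$. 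The same issue appears in \Cref{sec:open}: the exponent $f(X)$ is at least $\tw(X)-1$, so for $X=K_t$ no absolute-constant exponent is possible. The informal argument you give for the lemma breaks in two places: a path of length $r$ can climb arbitrarily many levels of the tree without ``paying'' an edge per adhesion set (a single vertex may lie in all adhesion sets along a root-to-node path), and even after shortcutting through torsos the entry points compound multiplicatively level by level rather than additively, which is exactly how the adhesion ends up in the exponent. You correctly flag this as ``the main obstacle,'' but it is not an obstacle to be overcome --- the statement is simply not true, so the whole architecture (compose $\wcol_r$ bounds across the Graph Minor Structure Theorem decomposition, whose adhesion depends on $|V(X)|$) cannot work.

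For comparison, the paper avoids any such composition step. It builds a single global $H$-partition of $G$ with an elimination ordering (\Cref{lemma:structure_for_wcol}), where $\tw(H)\le\tau(\td(X),0)=2^{\td(X)+1}-4$ is controlled by the attached-model decomposition machinery of \Cref{sec:attached:better,sec:main} --- the same induction that proves \Cref{XMinorFreeProduct} --- and each part is a set of at most $\epsilon(h,d,0)$ vertices plus a union of at most $\epsilon(h,d,0)$ subgeodesics. The exponent then comes from $\tw(H)$ via \Cref{theorem:Grohe_et_al_wcol_bounded treewidth}, and each part contributes only a factor $O(r)$ by \Cref{lemma:intersection_ball_with_geodesic}. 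The Graph Minor Structure Theorem and the Pilipczuk--Siebertz geodesic partition (ingredients you also reach for) enter only inside the proof of the Helly-type \Cref{lem:X:geodesics}, which supplies the geodesic hitting sets; crucially, the parameters of that structure theorem affect only the multiplicative constant $\epsilon(h,d,0)$, never the exponent. If you want to salvage your approach, you would need to replace the composition lemma by a mechanism in which the quotient structure has treewidth bounded in terms of $\td(X)$ alone --- which is precisely what the paper's decomposition lemmas provide.
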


Again, the point of~\Cref{thm-wcol-main} is that the degree of the polynomial in $r$ bounding $\wcol_r(G)$ depends only on $\td(X)$ and not on $|V(X)|$. 
In the previous best bound for weak colouring number of $X$-minor free graphs, the degree of the polynomial in $r$ depended on the vertex-cover\footnote{The \emph{vertex-cover number} $\tau(G)$ of a graph $G$ is the size of a smallest set $S \subseteq V(G)$ such that every edge of $G$ has at least one endpoint in $S$.} number $\tau(X)$. 
In particular, it follows from a result by van den Heuvel and Wood~\cite[Proposition 28]{vdHW18} regarding the weak colouring number of $K_{s,t}^\star$-minor-free graphs that $\wcol_r(G) \leq c\cdot r^{\tau(X)+1}$ for every $X$-minor-free graph $G$ and integer $r\geq 1$. \Cref{thm-wcol-main} is qualitatively stronger since $\td(X)\leq \tau(X)+1$ and there are graphs $X$ with $\td(X)=3$ and arbitrarily large $\tau(X)$.

The proof of the theorem relies on the same decomposition lemma as the proof of~\Cref{thm:selling-point}. 
The ordering of the vertices witnessing the bound on $\wcol_r$ in~\Cref{thm-wcol-main} is built via chordal partitions---a powerful proof technique originally developed in the 1980s in the context of the cops and robber game~\cite{A86} that was rediscovered and used in~\cite{vdHetal17} to bound weak coloring numbers, and has subsequently found several other applications in structural graph theory.

\textbf{Application \#2. Product Structure for Apex-Minor-Free Graphs.} As already mentioned, Dujmovi\'c~et~al.~\cite{DJMMUW20} proved that every planar graph is isomorphic to a subgraph of $H\boxtimes P\boxtimes K_3$ for some graph $H$ with treewidth at most 3 and for some path $P$. This result has been the key ingredient in the solution of several open problems on planar graphs~\cite{DJMMUW20,DEJWW20,EJM,DEJGMM21}. Building on this work,  Distel, Hickingbotham, Huynh, and Wood~\cite{DHHW22} proved that every graph of Euler genus $g$ is isomorphic to a subgraph of $H\boxtimes P\boxtimes K_{\max\{2g,3\}}$ for some graph $H$ with treewidth at most 3 and for some path $P$. 
More generally, Dujmovi\'c~et~al.~\cite{DJMMUW20} characterized the graphs $X$ for which there exist integers $t$ and $c$ such that every $X$-minor-free graph is isomorphic to a subgraph of $H\boxtimes P\boxtimes K_c$ where $\tw(H)\leq t$ and $P$ is a path. 
The answer is precisely the apex graphs. 
Here a graph $X$ is \emph{apex} if $V(X)=\emptyset$ or $X-u$ is planar for some vertex $u$ of $X$. 
The following natural problem arises: for a given apex graph $X$, what is the minimum integer $t(X)$ such that, for some integer $c$, every $X$-minor-free graph is isomorphic to a subgraph of $H\boxtimes P\boxtimes K_c$ where $\tw(H)\leq t(X)$ and $P$ is a path? Illingworth, Scott and Wood~\cite{illingworth2022product} showed that $t(X)\leq \tau(X)$. 
We show, via an application of \cref{XMinorFreeProduct}, that  $t$ is tied to treedepth. In particular, 
\begin{equation}
    \label{ApexStructure}
    \td(X)-2 \leq t(X) \leq 2^{\td(X)+1}-1.
\end{equation}
The proof of this result is presented in \Cref{sec:excluding_an_apex}. 

\textbf{\boldmath Application \#3. $p$-Centered Colorings.}
\Cref{thm:selling-point} can also be used to improve bounds for $p$-centered chromatic numbers of graphs excluding a fixed minor.
For an integer $p\geq 1$, a vertex coloring $\phi$ of a graph $G$ is \emph{$p$-centered} if for every connected subgraph $H$ of $G$ either $\phi$ uses more than $p$ colors in $H$ or there is a color that appears exactly once in $H$. The \emph{$p$-centered chromatic number} of $G$, denoted by $\chi_p(G)$, is the least number of colors in a $p$-centered coloring of $G$.
Centered colourings are important since they characterize graph classes of bounded expansion~\cite{NOM2012}, and are a central tool for designing parameterized algorithms in classes of bounded expansion~\cite{PS19,PW18}; see \cite{DFMS21} for an overview. 

If $\tw(G)\leq t$ then $\chi_p(G)\leq \binom{p+t}{t}$, and this bound is again tight~\cite{PS19,DFMS21}. 
If $X$ is a planar graph and $G$ is $X$-minor-free, then by~\Cref{thm:selling-point},
there exists a graph $H$ with $\tw(H)\leq f(\td(X))$ and $c$ depending only on $X$ such that 
$G\subsetsim H\boxtimes K_c$. Therefore,
\[
\chi_p(G) \leq \chi_p(H\boxtimes K_c) \leq c\cdot\chi_p(H) \leq c\cdot p^{f(\td(X))},
\]
where the first inequality follows from the monotonicity of $\chi_p$ and the second inequality is an easy property of $p$-centered colorings (Lemma~8 in~\cite{DFMS21}). The obtained upper bound on $\chi_p(G)$ is polynomial in $p$, where the exponent depends only on $\td(X)$. Similarly, for an apex graph $X$, we use
\eqref{ApexStructure} to show that for some $c=c(X)$, every $X$-minor-free graph $G$ satisfies $\chi_p(G) \leq c\cdot p^{f(\td(X))}$. 

\textbf{Outline.} 
\Cref{sec:prelim} gives all the necessary definitions, as well as some preliminary results about tree-decompositions. 
\Cref{LowerBounds} proves the lower bounds in \eqref{eq:bound_on_f} and \eqref{ApexStructure}. 
\Cref{sec:attached:better} provides a decomposition lemma (\Cref{lemma:find_rooted_cliques}) for graphs avoiding an `attached model' of a fixed graph, which is a key ingredient in the results that follow. 
This part of the argument, in particular \Cref{lemma:find_attached_minors_or_cuts_preserve_model}, is inspired by a result of Kawarabayashi~\cite{kawarabayashi_rooted_2004} on rooted minors, which in turn is inspired by results of Robertson and Seymour~\cite{robertson_graph_1995}. 
\Cref{sec:main} contains the proof of~\Cref{XMinorFreeProduct}.
\Cref{sec:wcol_main} contains the proof of~\Cref{thm-wcol-main}, which relies on chordal partitions (see Lemma~\ref{lemma:Heuvel_et_al_wcol_Kt_minor_free} and Figure~\ref{fig:wcol_Kt_minor_free}), 
and a variant of the Helly property for $K_t$-minor-free graphs that is of independent interest (see~\Cref{lem:X:geodesics}). 
Section~\ref{sec:graph-minor-struggles} contains the proof of~\Cref{lem:X:geodesics}. 
This part builds on the work of Pilipczuk and Siebertz~\cite{PS19} for bounded genus graphs, 
and on the Graph Minor Structure Theorem of Robertson and Seymour~\cite{GMXVI} for $K_t$-minor-free graphs. 
\Cref{sec:excluding_an_apex} presents a product structure decomposition for graphs excluding an apex graph of small treedepth as a minor. As a consequence, we obtain better bounds for the $p$-centered chromatic number of such graphs. \Cref{sec:open} concludes with four questions that we find relevant and exciting for future work.

\section{Preliminaries}
\label{sec:prelim}

For a positive integer $k$, we use the notation $[k]=\{1,\ldots,k\}$, 
and when $k=0$ let $[k] = \emptyset$. The \emph{empty graph} is the graph with no vertices. 
All graphs considered in this paper are finite and may be empty. 

Let $G$ be a graph.
Recall that the \emph{length} of a path $P$, denoted by $\len(P)$, is the number of edges of $P$.
The \emph{distance} between two vertices $u$ and $v$ in $G$, denoted by $\dist_G(u,v)$, is the minimal length of a path with endpoints $u,v$ in $G$, 
if such a path exists, and $+\infty$ otherwise.
A path $P$ is a \emph{geodesic} in $G$ if it is a shortest path between its endpoints in $G$.

Let $u$ be a vertex of $G$.
The \emph{neighborhood} of $u$ in $G$, denoted by $N_G(u)$, is the set $\{v \in V(G) \mid uv \in E(G)\}$.
For every set $X$ of vertices of $G$, let $N_G(X)=\bigcup_{u \in X} N_G(u) \setminus X$.
For every integer $r \geq 1$, we denote by $N_G^r[u] = \{v \in V(G) \mid \dist_G(u,v) \leq r\}$.
We omit $G$ in the subscripts when it is clear from the context.

A \emph{rooted forest} is a disjoint union of rooted trees.
The \emph{vertex-height} of a rooted forest $F$ is the maximum number of vertices on a path from a root to a leaf in $F$.  
For two vertices $u$, $v$ in a rooted forest $F$, we say that $u$ is a \emph{descendant} of $v$ in $F$
if $v$ lies on the path from a root to $u$ in $F$.  
The \emph{closure} of $F$ 
is the graph with vertex set $V(F)$ and edge set $\set{vw\mid v \text{ is a strict descendant of $w$ in $F$}}$. 
The \emph{treedepth} of a graph $G$, denoted by $\td(G)$, is $0$ if $G$ is empty, and otherwise is the minimum vertex-height of a rooted forest $F$ with $V(F)=V(G)$ such that $G$ is a subgraph of the closure of $F$.

Consider the following family of graphs $\set{U_{h,d}}$. 
For every positive integer $d$, define $U_{0,d}$ to be the empty graph. 
For all positive integers $h$ and $d$, define 
$U_{h,d}$ to be the closure of the disjoint union of $d$ complete $d$-ary trees of vertex-height $h$. 
Observe that $U_{h,d}$ has treedepth $h$. 
Moreover, this family of graphs is universal for graphs of bounded treedepth: For every graph $X$ of treedepth at most $h$, there exists $d$ such that $X \subsetsim U_{h,d}$. Thus, every $X$-minor-free graph is $U_{h,d}$-minor-free, and to prove \Cref{XMinorFreeProduct} it suffices to do so for $X=U_{h,d}$.  

A \emph{tree-decomposition} $\mathcal{W}$ of a graph $G$ is a pair $(T,(W_x \mid x\in V(T)))$, 
where $T$ is a tree and the sets $W_x$ for each $x \in V(T)$ are subsets of $V(G)$ 
called \emph{bags} satisfying:
\begin{enumerate}
\item for each edge $uv\in E(G)$ there is a bag containing both $u$ and $v$, and
\item for each vertex $v\in V(G)$ the set of vertices $x\in V(T)$ with 
$v\in W_x$  induces a non-empty subtree of $T$.
\end{enumerate}
The {\em width} of $\mathcal{W}$ is $\max\{|W_x|-1 \mid x\in V(T) \}$, and its \emph{adhesion} is $\max\{|W_x\cap W_y| \mid xy \in E(T)\}$. 
The \emph{treewidth} of a graph $G$, denoted $\tw(G)$, is the minimum width of a tree-decomposition of $G$.


A \emph{clique} in a graph is a set of pairwise adjacent vertices. Given two graphs $G_1,G_2$, a clique $K^1$ in $G_1$, a clique $K^2$ in $G_2$, a function $f:K^2\to K^1$, the \emph{clique-sum} of $G_1$ and $G_2$ according to $f$ is the graph $G$ obtained from the disjoint union of $G_1$ and $G_2$ by identifying $x$ with $f(x)$ for every $x \in K^2$. Note that $f$ does not have to be injective. It is well known that $\tw(G) \leq \max\{\tw(G_1),\tw(G_2)\}$. 

Given two graphs $G$ and $H$, an \emph{$H$-partition} of $G$ is a partition $(V_x \mid x \in V(H))$ of $V(G)$ with possibly empty parts
such that for all distinct $x,y\in V(H)$, 
if there is an edge between $V_x$ and $V_y$ in $G$, then $xy \in E(H)$. 
The \emph{width} of such an $H$-partition is $\max\{|V_x| \colon x \in V(H)\}$.

\begin{obs}[Observation 35 in~\cite{DJMMUW20}]
\label{ObsPartitionProduct}
For all graphs $G$ and $H$, and every positive integer $c$, $G\subsetsim H\boxtimes K_c$ if and only if $G$ has an $H$-partition of width at most $c$.
\end{obs}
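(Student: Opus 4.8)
The plan is to establish both directions by directly translating between an $H$-partition of $G$ and a subgraph embedding of $G$ into $H \boxtimes K_c$, using the explicit description of the vertex set and edge set of the strong product.

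First I would prove the forward direction. Suppose $G \subsetsim H \boxtimes K_c$, and fix an injective homomorphism $\eta \colon V(G) \to V(H) \times V(K_c)$ whose image induces a subgraph containing (a copy of) $G$; write $\eta(v) = (h(v), k(v))$. For each $x \in V(H)$ set $V_x := \{v \in V(G) : h(v) = x\}$. These sets partition $V(G)$ since $\eta$ is a function, and each $V_x$ has size at most $c$ because $\eta$ is injective and $|V(K_c)| = c$. It remains to check the partition condition: if $uv \in E(G)$ with $u \in V_x$, $v \in V_y$ and $x \neq y$, then $\eta(u)\eta(v)$ is an edge of $H \boxtimes K_c$ with distinct first coordinates $x \neq y$, so by definition of the strong product we must have $xy \in E(H)$, as required.

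Next I would prove the converse. Suppose $G$ has an $H$-partition $(V_x \mid x \in V(H))$ of width at most $c$. Since $|V(K_c)| = c$, for each $x \in V(H)$ choose an injection $\iota_x \colon V_x \to V(K_c)$, and define $\eta \colon V(G) \to V(H) \times V(K_c)$ by $\eta(v) = (x, \iota_x(v))$ where $x$ is the unique part with $v \in V_x$. Then $\eta$ is injective: if $\eta(u) = \eta(v)$ then $u$ and $v$ lie in the same part $V_x$ and $\iota_x(u) = \iota_x(v)$, so $u = v$. Finally, for each edge $uv \in E(G)$ with $u \in V_x$, $v \in V_y$, either $x = y$, in which case $\eta(u)$ and $\eta(v)$ share their first coordinate and differ in the second (as $\iota_x$ is injective and $u \neq v$), so $\eta(u)\eta(v) \in E(H \boxtimes K_c)$; or $x \neq y$, in which case the partition condition gives $xy \in E(H)$, and since $K_c$ is complete the second coordinates $\iota_x(u)$, $\iota_y(v)$ are equal or adjacent, so again $\eta(u)\eta(v) \in E(H \boxtimes K_c)$. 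Hence $\eta$ witnesses $G \subsetsim H \boxtimes K_c$.

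There is no real obstacle here; the statement is essentially a bookkeeping exercise, and the only mild subtlety is being careful that the strong product's edge set permits both the ``$x = y$, second coordinates adjacent'' case and the ``$x \neq y$, first coordinates adjacent, second coordinates equal or adjacent'' case — both of which are covered precisely by the three clauses in the definition of $\boxtimes$ and the completeness of $K_c$. Since this is stated as an observation and attributed to~\cite{DJMMUW20}, I would in fact keep the write-up to two or three sentences in the final paper, or simply cite the reference.
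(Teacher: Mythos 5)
Your proof is correct and is the standard direct verification: the paper itself gives no proof of this observation, simply citing Observation~35 of~\cite{DJMMUW20}, and your two-direction bookkeeping argument (projecting to the $H$-coordinate one way, choosing injections $\iota_x\colon V_x\to V(K_c)$ the other way) is exactly the intended argument. One small point worth noting in a write-up: the paper's definition of $H$-partition explicitly allows empty parts, which is what makes the forward direction's sets $V_x$ a valid $H$-partition even when some $x\in V(H)$ receives no vertex of $G$.
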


A \emph{partition} of a graph $G$ is a family $\mathcal{P}$ of induced subgraphs of $G$ such that every vertex in $G$ is in the vertex set of exactly one member of $\mathcal{P}$.
Given a partition $\mathcal{P}$ of $G$, define $G/\mathcal{P}$
to be the graph with vertex set $\mathcal{P}$ and edge set all the pairs $P,Q \in \mathcal{P}$ such that there is an edge between $V(P)$ and $V(Q)$ in $G$. 

A \emph{layering} of a graph $G$ is a sequence $(L_0,L_1,\ldots)$ of disjoint subsets of $V(G)$ whose union is $V(G)$ and such that for every edge $uv$ of $G$ there is a non-negative integer $i$ such that $u,v \in L_i \cup L_{i+1}$.
A \emph{layered partition} of $G$ is a pair $(\mathcal{P},\mathcal{L})$ where $\mathcal{P}$ is a vertex partition of $G$ and $\mathcal{L}$ is a layering of $G$.

\begin{obs}[Observation 35 in~\cite{DJMMUW20}]
\label{ObsLayeredPartitionProduct}
For all graphs $G$ and $H$, and every positive integer $c$, $G\subsetsim H\boxtimes P  \boxtimes K_c$ for some path $P$ if and only if
there is an $H$-partition $(V_x \mid x \in V(H))$ of $G$ and a layering $\mathcal{L}$ such that $|V_x \cap L| \leq c$ for every $x \in V(H)$ and $L \in \mathcal{L}$.
\end{obs}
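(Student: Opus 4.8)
The plan is to unwind both implications directly from the definitions, exactly mirroring the argument behind \cref{ObsPartitionProduct}, since this statement is really a definitional repackaging rather than something with a substantive obstacle.

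For the forward direction I would assume $G$ is isomorphic to a subgraph of $H\boxtimes P\boxtimes K_c$ for some path $P$, fix an enumeration $p_0,p_1,\dots$ of $V(P)$ in the order they appear along $P$, and identify each $v\in V(G)$ with a triple $(h(v),p(v),k(v))\in V(H)\times V(P)\times[c]$. Then I set $V_x:=\{v\in V(G) : h(v)=x\}$ for $x\in V(H)$, and $L_i:=\{v\in V(G): p(v)=p_i\}$. Three checks remain. First, $(V_x \mid x\in V(H))$ is an $H$-partition: if $uv\in E(G)$ with $h(u)\ne h(v)$, then since $uv$ is an edge of the strong product and the first coordinates differ, the definition of $\boxtimes$ forces $h(u)h(v)\in E(H)$. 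Second, $(L_0,L_1,\dots)$ is a layering: if $uv\in E(G)$ with $p(u)=p_i$ and $p(v)=p_j$, then $p_i$ and $p_j$ are equal or adjacent in $P$, so $|i-j|\le 1$ and hence $u,v\in L_i\cup L_{i+1}$ (taking $i$ to be the smaller index). Third, the vertices of $V_x\cap L_i$ share their first two coordinates, so they are distinguished by the $[c]$-coordinate, giving $|V_x\cap L_i|\le c$.

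For the backward direction, suppose $(V_x\mid x\in V(H))$ is an $H$-partition of $G$ and $\mathcal{L}=(L_0,L_1,\dots)$ is a layering with $|V_x\cap L_i|\le c$ for all $x$ and $i$. Since $G$ is finite, pick $N$ with $L_i=\emptyset$ for $i>N$ and let $P$ be the path with vertices $p_0,p_1,\dots,p_N$ in that order. For $v\in V(G)$ let $x(v)$ be the (unique) vertex of $H$ with $v\in V_{x(v)}$ and $i(v)$ the index with $v\in L_{i(v)}$; for each pair $(x,i)$ fix an injection of $V_x\cap L_i$ into $[c]$ and let $k(v)$ be the image of $v$ under the injection for $(x(v),i(v))$. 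Define $\phi(v):=(x(v),p_{i(v)},k(v))$. Then $\phi$ is injective, since $\phi(u)=\phi(v)$ forces $u$ and $v$ to lie in the same set $V_x\cap L_i$ with the same label. And $\phi$ sends edges to edges: for $uv\in E(G)$, the $H$-partition property gives $x(u)=x(v)$ or $x(u)x(v)\in E(H)$; the layering property gives $|i(u)-i(v)|\le 1$, so $p_{i(u)}$ and $p_{i(v)}$ are equal or adjacent in $P$; and any two elements of $[c]$ are equal or adjacent in $K_c$; as $\phi(u)\ne\phi(v)$, the definition of the strong product yields $\phi(u)\phi(v)\in E(H\boxtimes P\boxtimes K_c)$. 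Hence $G\subsetsim H\boxtimes P\boxtimes K_c$.

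There is no real difficulty here; the only points needing a little care are that the adjacency condition for $\boxtimes$ forbids all coordinates being simultaneously equal, which is handled by the injectivity of the embedding in the backward direction, and that in that direction the path $P$ must be taken long enough to have one vertex per (nonempty) layer, while the per-block injections into $[c]$ are furnished exactly by the hypothesis $|V_x\cap L_i|\le c$.
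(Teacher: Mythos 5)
Your proof is correct and is just the standard definitional unwinding behind this statement; the paper itself gives no proof, citing Observation~35 of~\cite{DJMMUW20}, whose argument is essentially the one you wrote out. Both directions are handled properly, including the two delicate points (injectivity guaranteeing the ``not all coordinates equal'' clause of the strong product, and the width hypothesis furnishing the injections into $[c]$), so there is nothing to add.
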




We finish these preliminaries with three simple statements on tree-decompositions.

\begin{lemma}[Statement (8.7) in~\cite{RS86V}]\label{lemma:helly_property_tree_decomposition}
    For every graph $G$, for every tree-decomposition $\mathcal{W}$ of $G$, for every family $\mathcal{F}$ of connected subgraphs of $G$, for every positive integer $d$, either:
    \begin{enumerate}
        \item there are $d$ pairwise vertex-disjoint subgraphs in $\mathcal{F}$, or
        \item there is a set $S$ that is the union of at most $d-1$ bags of $\mathcal{W}$ such that $V(F) \cap S \neq \emptyset$ for every $F \in \mathcal{F}$.
    \end{enumerate}
\end{lemma}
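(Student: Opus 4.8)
The plan is to reduce the statement to a min--max property of subtrees of a tree and then prove that property by a short greedy argument. Write $\mathcal W=(T,(W_x\mid x\in V(T)))$, root $T$ at an arbitrary node, and (we may assume each $F\in\mathcal F$ is non-empty, as otherwise the statement degenerates). For $F\in\mathcal F$ let $T_F$ be the subgraph of $T$ induced by $\{x\in V(T)\mid W_x\cap V(F)\neq\emptyset\}$. The first thing I would verify is the standard fact that $T_F$ is a non-empty subtree of $T$: for each $v\in V(F)$ the set $T_v:=\{x\mid v\in W_x\}$ induces a non-empty subtree by the second tree-decomposition axiom; for each edge $uv$ of $F$ the first axiom gives a bag containing both $u$ and $v$, so $T_u$ and $T_v$ share a node; and since $F$ is connected, $\bigcup_{v\in V(F)}T_v=T_F$ is a connected subgraph of $T$, hence a subtree. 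Two easy consequences complete the reduction: (a) if $T_F$ and $T_{F'}$ are vertex-disjoint then so are $F$ and $F'$, since a common vertex $v$ would place $T_v$ in both; and (b) if $R\subseteq V(T)$ meets every $T_F$ then $S:=\bigcup_{x\in R}W_x$ meets every $V(F)$.

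By (a) and (b) it suffices to prove: for every rooted tree $T$ and every family $\mathcal T$ of non-empty subtrees of $T$, either $\mathcal T$ contains $d$ pairwise vertex-disjoint members, or some set of at most $d-1$ nodes of $T$ meets every member of $\mathcal T$. For a non-empty subtree $T'$ let $\mathrm{top}(T')$ denote the node of $T'$ closest to the root, which is well defined and unique because $T'$ is connected (note that $\mathrm{top}(T')$ is an ancestor of every node of $T'$). Now run the following greedy process, which terminates since $T$ is finite: while $\mathcal T\neq\emptyset$, choose $T_i\in\mathcal T$ maximizing the depth of $\mathrm{top}(T_i)$, set $x_i:=\mathrm{top}(T_i)$, delete from $\mathcal T$ every subtree containing $x_i$, and repeat. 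Let $x_1,\dots,x_k$ be the chosen nodes, with $T_1,\dots,T_k$ the subtrees picked at the corresponding steps. By construction every member of the original family contains some $x_i$, so $\{x_1,\dots,x_k\}$ is a transversal.

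It remains to show that $T_1,\dots,T_k$ are pairwise vertex-disjoint; granting this, either $k\geq d$, and then $T_1,\dots,T_d$ witness outcome (i) via (a), or $k\leq d-1$, and then $\{x_1,\dots,x_k\}$ witnesses outcome (ii) via (b). Suppose $i<j$ and $y\in V(T_i)\cap V(T_j)$. Since $T_j$ is deleted only at step $j$, it is present at step $i$ and does not contain $x_i$; hence the greedy rule at step $i$ gives $\mathrm{depth}(\mathrm{top}(T_j))\leq\mathrm{depth}(x_i)$. Both $x_i=\mathrm{top}(T_i)$ and $\mathrm{top}(T_j)$ are ancestors of $y$, hence comparable in $T$, and by the depth inequality $\mathrm{top}(T_j)$ is an ancestor of $x_i$. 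Then $x_i$ lies on the path in $T_j$ from $\mathrm{top}(T_j)$ to $y$, so $x_i\in V(T_j)$, contradicting that $T_j$ survived step $i$.

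The argument is entirely elementary; the one step that needs care is the greedy selection — specifically, the choice of the rule ``pick a subtree whose top is deepest'', which is exactly what forces (1) a deleted subtree never to reappear and (2) the witnesses $T_1,\dots,T_k$ to be pairwise disjoint, as verified above. Everything else is routine bookkeeping with the tree-decomposition axioms. (An alternative would be to invoke perfection of the intersection graphs of subtrees of a tree, i.e. chordal graphs, but the self-contained greedy proof above is shorter.)
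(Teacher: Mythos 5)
Your proof is correct. Note that the paper does not prove this lemma at all --- it is imported verbatim as Statement (8.7) of Robertson and Seymour's Graph Minors V --- so there is no in-paper argument to compare against; your reduction to subtrees of $T$ followed by the ``deepest top first'' greedy transversal is the standard self-contained proof of this Helly-type min--max statement, and every step (top of a subtree is an ancestor of all its nodes, comparability of two ancestors of $y$, the path from $\mathrm{top}(T_j)$ to $y$ staying inside $T_j$) checks out. The only cosmetic point is that $\mathcal{T}$ should be read as indexed by $\mathcal{F}$ so that the $k$ chosen subtrees yield $k$ \emph{distinct} members of $\mathcal{F}$, which your deletion rule already guarantees since each picked member is removed at its own step.
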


A tree-decomposition
$(T,(W_x\mid x\in V(T)))$ of a graph $G$ is said to be \emph{natural} if for every edge $e$ in $T$, 
for each component $T_0$ of $T-e$, the graph $G\left[\bigcup_{z\in V(T_0)} W_z\right]\ \textrm{is connected.}$ 
The following statement appeared first in~\cite{FN06}, see also~\cite{GJNW23}.
\begin{lemma}[Theorem~1 in~\cite{FN06}]\label{lemma:natural_tree_decomposition}
    Let $G$ be a connected graph and let $(T,(W_x\mid x\in V(T)))$ be a tree-decomposition of $G$. 
    There exists a natural tree-decomposition $(T',(W'_x\mid x\in V(T')))$ of $G$ such that 
    for every $x'\in V(T')$ there is $x\in V(T)$ with 
    $W'_{x'}\subseteq W_x$.
\end{lemma}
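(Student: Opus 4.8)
The plan is to prove this by an extremal argument that, morally, repeatedly ``splits'' subtrees to repair non-connectedness, but organized so that no explicit iteration is needed. Say that a tree-decomposition $\mathcal{W}'=(T',(W'_x\mid x\in V(T')))$ of $G$ \emph{refines} $(T,(W_x\mid x\in V(T)))$ if every bag of $\mathcal{W}'$ is a subset of some $W_x$, and define the potential $\Phi(\mathcal{W}') := \sum_{x\in V(T')}|W'_x|^2$. Among all tree-decompositions of $G$ that refine $(T,(W_x))$ and have no empty bag, I would pick one, say $\mathcal{W}^\star=(T^\star,(W^\star_x))$, minimizing $\Phi$. (If $V(G)=\emptyset$ the statement is trivial, so assume $V(G)\neq\emptyset$. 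Pruning the empty bags of $(T,(W_x))$---deleting empty-bag leaves and contracting empty-bag internal nodes into a neighbour---yields a decomposition in this family; letting $M$ be its potential, every member of the family with potential at most $M$ has at most $M$ nodes, since each nonempty bag contributes at least $1$, so only finitely many such decompositions exist and the minimum is attained.) Every bag of $\mathcal{W}^\star$ is a subset of some $W_x$, so it suffices to prove that $\mathcal{W}^\star$ is natural.

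Suppose not. Then there is an edge $e=xy$ of $T^\star$ such that, writing $T^\star_x$ and $T^\star_y$ for the components of $T^\star-e$ containing $x$ and $y$ and setting $U_x:=\bigcup_{z\in T^\star_x}W^\star_z$ and $U_y:=\bigcup_{z\in T^\star_y}W^\star_z$, the graph $G[U_y]$ is disconnected (the case where $G[U_x]$ is disconnected is symmetric). I would use the standard facts that $U_x\cup U_y=V(G)$ and $U_x\cap U_y=W^\star_x\cap W^\star_y=:S$. Let $C_1,\dots,C_k$ (with $k\geq2$) be the components of $G[U_y]$; they partition $U_y$. Each $C_i$ meets $S$: otherwise $C_i\subseteq U_y\setminus S$, and since every neighbour of a vertex of $U_y\setminus S$ lies in $U_y$ (a bag witnessing such an edge must lie in $T^\star_y$, as the vertex is not in $U_x$), the set $C_i$ would have no neighbours outside itself, hence be a union of components of $G$, forcing $C_i=V(G)$ and $k=1$ since $G$ is connected. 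So $S_i:=S\cap C_i\neq\emptyset$ for every $i$, and the sets $S_1,\dots,S_k$ partition $S$.

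Now I would apply the key \emph{split} operation. Build $\mathcal{W}'$ from $\mathcal{W}^\star$ by keeping $T^\star_x$ with its bags unchanged, replacing $T^\star_y$ by $k$ disjoint copies $T^{(1)},\dots,T^{(k)}$ of it in which the copy $T^{(i)}$ carries the bags $W^\star_z\cap C_i$ (for $z\in T^\star_y$), attaching the copy $y^{(i)}$ of $y$ to $x$ by an edge, and finally removing all empty bags. Every bag is a subset of a bag of $\mathcal{W}^\star$, hence of some $W_x$, so $\mathcal{W}'$ refines $(T,(W_x))$. The one real verification is that $\mathcal{W}'$ is a valid tree-decomposition of $G$: a vertex of $U_x\setminus S$ keeps exactly its old (connected) set of bags, all in $T^\star_x$; a vertex of $U_y\setminus S$ lies in a unique $C_i$ and its new set of bags is the $T^{(i)}$-copy of its old bag-subtree; and a vertex $v\in S_i$ has as its set of bags the union of its old bags in $T^\star_x$ (a subtree containing $x$) with the $T^{(i)}$-copy of its old bags in $T^\star_y$ (a subtree containing $y^{(i)}$), which is connected because $x$ is adjacent to $y^{(i)}$ in $\mathcal{W}'$. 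For edge coverage, an edge of $G$ witnessed by a bag in $T^\star_x$ is still covered, while an edge witnessed by a bag $W^\star_z$ with $z\in T^\star_y$ has both endpoints in $U_y$, hence in the same $C_i$, so it is covered by $W^\star_z\cap C_i$ in $T^{(i)}$.

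It remains to compare potentials. For each $z\in T^\star_y$ the sets $W^\star_z\cap C_1,\dots,W^\star_z\cap C_k$ partition $W^\star_z$, since $W^\star_z\subseteq U_y$; and because $(a+b)^2\geq a^2+b^2$ for nonnegative $a,b$, with equality only when $ab=0$, iterating gives $\sum_{i}|W^\star_z\cap C_i|^2\leq|W^\star_z|^2$, strict whenever at least two of the parts are nonempty. As $S\subseteq W^\star_y$ and each $S_i\neq\emptyset$, the inequality is strict for $z=y$. Summing over $z\in T^\star_y$ and leaving the $T^\star_x$-side untouched yields $\Phi(\mathcal{W}')<\Phi(\mathcal{W}^\star)$, contradicting the minimality of $\mathcal{W}^\star$. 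Hence $\mathcal{W}^\star$ is natural, which proves the lemma. The main obstacle is not conceptual but bookkeeping: choosing a potential (superadditivity of $n\mapsto n^2$) that provably strictly decreases under the split, and carefully checking that the split preserves both tree-decomposition axioms.
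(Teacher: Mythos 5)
Your proof is correct and follows essentially the same strategy as the paper's (and the cited [FN06]) argument: when some side of an edge induces a disconnected subgraph, split that subtree into one copy per component, and certify termination with a measure that strictly drops — you use minimality of $\sum_x |W_x|^2$ where the paper uses lexicographic descent on the sorted multiset of bag sizes, but these are interchangeable. All the key verifications (each component of $G[U_y]$ meets the adhesion set $S=W^\star_x\cap W^\star_y$, the split preserves both tree-decomposition axioms, and the potential strictly decreases at the node $y$) are present and correct.
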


The following technical lemma encapsulates a step in the main proof.  
In this lemma, we ``capture'' a given set of vertices $Y$ with a superset $X$ such that $X$ is not too large and each component of $G-X$ has a bounded number of neighbors in $X$.

\begin{lemma}\label{lemma:increase_X_to_have_small_interfaces}
Let $m$ be a positive integer.
Let $G$ be a graph and let $\mathcal{W}$ be a tree-decomposition of $G$.
If $Y$ is the union of $m$ bags of $\mathcal{W}$, then there is a set $X$ that is the union of at most $2m-1$ bags of $\mathcal{W}$ such that $Y\subseteq X$ and for every component $C$ of $G-X$, $N(V(C)) \cap X$ is a subset of the union of at most two bags of $\mathcal{W}$.
Moreover, if $\mathcal{W}$ is natural, then $N(V(C)) \cap X$ intersects at most two components of $G-V(C)$.
\end{lemma}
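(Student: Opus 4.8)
The plan is to realize $X$ as the union of the bags indexed by a carefully chosen set $Z$ of nodes of the decomposition tree, and then to transfer information between the components of $G-X$ and the forest $T-Z$. Write $\mathcal{W}=(T,(W_x\mid x\in V(T)))$ and $Y=W_{x_1}\cup\dots\cup W_{x_m}$ for some nodes $x_1,\dots,x_m\in V(T)$. Let $T'$ be the minimal subtree of $T$ containing $\{x_1,\dots,x_m\}$, let $Z$ be the union of $\{x_1,\dots,x_m\}$ with the set of vertices of degree at least $3$ in $T'$, and put $X:=\bigcup_{a\in Z}W_a$. By minimality every leaf of $T'$ is one of the $x_i$, so $T'$ has at most $m$ leaves and hence at most $\max\{m-2,0\}$ vertices of degree at least $3$; thus $|Z|\le 2m-1$ and $Y\subseteq X$, which settles the bound on the number of bags.

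The structural heart of the argument is the claim that every component $T^*$ of $T-Z$ has at most two neighbours in $Z$ (in $T$). If $T^*$ is disjoint from $V(T')$, then it is attached to the rest of $T$ only through $V(T')$, and two such attachments would yield two distinct $T$-paths between two nodes of $V(T')$; since $T$ is a tree, $T^*$ then has exactly one neighbour in $Z$. If $T^*\cap V(T')\ne\emptyset$, then, being an intersection of two subtrees of $T$, it is a path of $T'$ all of whose nodes have degree $2$ in $T'$ and neither of whose ends is an $x_i$; such a path has exactly two neighbours in $T'$, both lying in $Z$, and the same tree-path argument shows that these are its only neighbours in $Z$.

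Now fix a component $C$ of $G-X$ and let $T_C=\{z\in V(T)\mid W_z\cap V(C)\ne\emptyset\}$; this is a nonempty connected subtree of $T$ by the standard fact that the bags meeting a connected subgraph form a subtree. Since $X$ contains every bag indexed by $Z$, the subtree $T_C$ avoids $Z$, hence lies inside a single component $T^*$ of $T-Z$, whose neighbourhood $B$ in $T$ is a subset of $Z$ of size at most two (and is nonempty since $T$ is connected and $Z\ne\emptyset$). If $v\in N(V(C))\cap X$, then $v$ shares a bag with a neighbour in $C$, so $v\in W_z$ for some $z\in T_C\subseteq T^*$, and $v\in W_a$ for some $a\in Z$; as $\{y\in V(T)\mid v\in W_y\}$ is a connected subtree meeting both $T^*$ and $Z$, it contains a node of $B$, so $v$ lies in one of the at most two bags indexed by $B$. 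This shows $N(V(C))\cap X$ is contained in the union of at most two bags. For the last sentence, assume $\mathcal{W}$ is natural and write $B=\{b_1,b_2\}$ (or a single node). Each $b_i$ is joined to $T^*$ by a unique edge $e_i=b_iu_i$ (uniqueness because $T$ is a tree), and $T^*$ — hence $T_C$ — lies in the component of $T-e_i$ avoiding $b_i$; so the set $A_i$ of vertices in bags indexed by the $b_i$-side of $T-e_i$ is disjoint from $V(C)$, contains $W_{b_i}$, and induces a connected subgraph of $G$ by the definition of a natural tree-decomposition. Therefore $W_{b_i}$ lies in a single component of $G-V(C)$, and $N(V(C))\cap X\subseteq W_{b_1}\cup W_{b_2}$ meets at most two components of $G-V(C)$.

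The step I expect to be the main obstacle is the structural claim about $T-Z$ — in particular, ruling out attachments of a component of $T-Z$ to $Z$ along edges outside $T'$, and checking the degenerate cases ($m=1$, or $T'$ a single vertex, or a component of $T-Z$ that hangs off $T'$ rather than lying strictly between two nodes of $Z$). Once that claim is established, the remainder is routine bookkeeping with the subtree (connectivity) properties of tree-decompositions and the definition of naturality.
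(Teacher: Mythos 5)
Your proof is correct. The overall frame is the same as the paper's: choose a set $Z$ of at most $2m-1$ nodes of the decomposition tree containing the nodes indexing $Y$, set $X=\bigcup_{a\in Z}W_a$, and then finish via the interpolation (subtree) property of tree-decompositions and, for the last sentence, via naturality applied to the two sides of the edges joining $Z$ to the component of $T-Z$ containing $T_C$. Where you genuinely diverge is in how $Z$ is produced. The paper proves an inductive claim on \emph{rooted} trees (for any nonempty $U$ there is $V\supseteq U$ with $|V|\le 2|U|-1$ such that each component of $T-V$ has at most two neighbours in $V$, and the root's component at most one), whereas you construct $Z$ explicitly as $U$ together with the branch vertices of the Steiner tree $T'$ of $U$. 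Your construction is arguably cleaner, avoids the induction entirely, and even gives the marginally better bound $|Z|\le 2m-2$ for $m\ge 2$; the paper's rooted version additionally supplies the ``root component has one neighbour'' refinement, which the lemma statement does not need. The one delicate point in your route is the structural claim that a component $T^*$ of $T-Z$ meeting $V(T')$ has no neighbour in $Z$ other than the two $T'$-neighbours of the path $T^*\cap T'$: an edge from a vertex of $T^*$ outside $V(T')$ to a vertex $z\in Z\subseteq V(T')$ must be excluded. You flag this and invoke uniqueness of tree paths, which does work (such an edge, concatenated with a path inside $T^*$ to $T^*\cap T'$, would give a second $z$--$T'$ path with an internal vertex outside $V(T')$), so the argument is complete, though this subcase deserves to be spelled out rather than waved at.
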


\begin{proof}
    First, we prove the following claim for rooted trees.
    \begin{claim}
        Let $T$ be a rooted tree, let $r$ be the root of $T$, and let $U$ be a non-empty subset of $V(T)$. Then, there exists $V \subseteq V(T)$ such that $U \subseteq V$, $|V| \leq 2|U|-1$, and for each component $C$ of $T-V$:
        \begin{enumerate}
            \item\label{item:root_component} if $r \in V(C)$, then $C$ is adjacent to at most one vertex of $V$;
            \item\label{item:non_root_component} otherwise, $C$ is adjacent to at most two vertices of $V$.
        \end{enumerate}
    \end{claim}

    \begin{proofclaim}
        We prove the claim by induction on $|V(T)|$.
        For a 1-vertex tree $T$, the statement holds with $V=U$.
        Now assume that $|V(T)|>1$.
        Let $T_1, \dots, T_{d}$ be the rooted subtrees of $T-r$, where $d$ is the degree of $r$ in $T$. 
        If $T_i$ is disjoint from $U$ for some $i \in [d]$, then apply induction to $T-T_i$ and $U$. The obtained set $V$ satisfies the claim also for $T$.
        Therefore, without loss of generality, we assume that for every $i \in[d]$, $T_i$ intersects $U$, and let $U_i = V(T_i) \cap U$.

        First, suppose that $d=1$. 
        By induction applied to $T_1$ and $U_1$, we obtain $V_1$ satisfying the assertion of the claim. 
        Let $V = V_1$ if $r \notin U$, and $V = V_1 \cup \{r\}$ otherwise.
        One can immediately verify that $V$ satisfies the assertion for $T$.

        Next, suppose that $d > 1$.
        For each $i \in [d]$, by induction applied to $T_i$ and $U_i$, we obtain $V_i$ satisfying the assertion.
        Let $V := \{r\} \cup \bigcup_{i \in [d]} V_i$.
        Clearly, $U \subset V$.
        Consider a component $C$ of $T-V$. Then $C$ is a component of $T_i-V$ for some $i\in [d]$. 
        Since $r \in V$, we have $r \notin C$, and so,~\ref{item:root_component} holds.
        If $C$ is not adjacent to $r$, then~\ref{item:non_root_component} is satisfied by induction.
        If $C$ is adjacent to $r$, then  the root of $T_i$ is in $C$, thus, by induction, $C$ is adjacent to at most one vertex in $V_i$, and so, it is adjacent to at most two vertices in $V$.
        Finally, 
        $|V|\leq 1+\sum_{i\in [d]}|V_i|
        \leq 1+\sum_{i\in [d]}(2|U_i|-1) 
        = (2\sum_{i \in [d]}|U_i|) - (d-1) 
        \leq 2|U| - (d-1) \leq 2|U| - 1$. 
    \end{proofclaim}

    Now, we prove the lemma.
    Let $\mathcal{W} = (T,(W_x \mid x \in V(T)))$.
    Let $U$ be a set of $m$ vertices in $T$ such that $Y \subseteq \bigcup_{x \in U} W_u$.
    By the claim, there exists $V \subseteq V(T)$ of size at most $2m-1$ such that $U \subseteq V$ and every component of $T-V$ has at most two neighbors in $V$.
    Let $X := \bigcup_{x \in V} W_x$. 
    For a given component $C$ of $G-X$, let $T_C$ be the subtree of $T$ induced by the vertices $x \in V(T)$ with $W_x \cap V(C) \neq \emptyset$.
    Observe that $T_C$ is connected and disjoint from $V$, and so $S = N_T(V(T_C)) \cap V$ has size at most two.
    Finally, $N(V(C)) \cap X \subseteq \bigcup_{s \in S} W_s$.
    Moreover, if $\mathcal{W}$ is natural, then for every $s \in S$, $W_s$ is in a single component of $G-V(C)$.
\end{proof}

\section{Improper Colourings and Lower Bounds}
\label{LowerBounds}

This section explores connections between our results and  improper graph colorings, which lead to the lower bound on $\utw(\cgG_X)$ in \eqref{eq:bound_on_f} and the lower bound on $t(X)$ in \eqref{ApexStructure}. 

A graph $G$ is \emph{$k$-colorable with defect $d$} if each vertex can be assigned one of $k$ colors such that each monochromatic subgraph has maximum degree at most $d$. A graph $G$ is \emph{$k$-colorable with clustering $c$} if each vertex can be assigned one of $k$ colors such that each monochromatic connected subgraph has at most $c$ vertices. The \emph{defective chromatic number} of a graph class $\mathcal{G}$ is the minimum integer $k$ such that for some integer $d$, every graph in $\mathcal{G}$ is $k$-colorable with defect $d$.  Similarly, the \emph{clustered chromatic number} of a graph class $\mathcal{G}$ is the minimum integer $k$ such that for some integer $c$, every graph in $\mathcal{G}$ is $k$-colorable with clustering $c$. These topics have been widely studied in recent years; see \citep{DEMWW22,vdHW18,OOW19,NSSW19,LW1,LW2,LW3,Liu22,WoodSurvey,EJ14,LO18} for example. Clustered coloring is closely related to the results in this paper, since a graph $G$ is $k$-colorable with clustering $c$ if and only if $G\subsetsim H\boxtimes K_c$ for some graph $H$ with $\chi(H)\leq k$. Our results are stronger in that they replace the condition $\chi(H)\leq k$ by the qualitatively stronger statement that $\tw(H)\leq k$ (since $\chi(H)\leq \tw(H)+1$). Of course, this is only possible when $G$ itself has bounded treewidth. 

The treedepth of $X$ is the right parameter to consider when studying the defective or clustered chromatic number of the class of $X$-minor-free graphs. Fix any connected\footnote{These results hold for possibly disconnected $X$, but with treedepth replaced by a variant parameter called connected treedepth, which differs from treedepth by at most 1.} graph $X$ with treedepth $h$. Ossona de Mendez, Oum and Wood~\cite[Proposition~6.6.]{OOW19} proved that the defective chromatic number of the class of $X$-minor-free graphs is at least $h-1$, and conjectured that equality holds. Norin, Scott, Seymour and Wood~\citep{NSSW19} proved a relaxation of this conjecture with an exponential bound, and in the stronger setting of clustered coloring. In particular, they showed that every $X$-minor-free graph is $(2^{h+1}-4)$-colorable with clustering $c(X)$. 
The proof of Norin~et~al.~\citep{NSSW19} went via treewidth. In particular, they showed that every $X$-minor-free graph with treewidth $t$ is $(2^{h}-2)$-colorable with clustering $ct$ where $c=c(X)$; that is, $G\subsetsim H\boxtimes K_{ct}$ for some graph $H$ with $\chi(H)\leq 2^h-2$.
\Cref{XMinorFreeProduct} provides a qualitative strengthening of this result by showing that $G\subsetsim H\boxtimes K_{ct}$ for some graph $H$ with $\tw(H)\leq 2^{h+1}$ where $c=c(X)$. Liu~\cite{Liu22} recently established the original conjecture of Ossona de Mendez~et~al.~\citep{OOW19}, which also implies that the clustered chromatic number of $X$-minor-free graphs is at most $3h-3$, by a result of~Liu and Oum~\cite[Theorem~1.5]{LO18}. 

For the sake of completeness, we now adapt the argument of Ossona de Mendez~et~al.~\cite{OOW19} to conclude the lower bound in \eqref{eq:bound_on_f}  on underlying treewidth, and the lower bound in \eqref{ApexStructure} related to the product structure of apex-minor-free graphs. We start with the following well-known statement (see \citep[Lemma~12]{CCDGHHHITTW22} for a similar result).




\begin{lemma}
\label{lem-uhd}
Let $h,d$ be positive integers, and let $H$ be a graph.
For every $H$-partition of $U_{h,d}$ of width at most $d$, we have $\tw(H)\geq h-1$.
\end{lemma}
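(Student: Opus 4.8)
The plan is to prove this by induction on $h$, exploiting the recursive structure of $U_{h,d}$: it is the closure of a disjoint union of $d$ complete $d$-ary trees of vertex-height $h$, so after deleting the $d$ roots (one per tree), each such tree splits into $d$ subtrees whose closures are copies of $U_{h-1,d}$. I would set up the induction to prove the slightly cleaner statement that if $U_{h,d}$ has an $H$-partition of width at most $d$, then $\tw(H) \geq h-1$. The base case $h=1$ is trivial since $\tw(H) \geq 0$ always holds for nonempty $H$ (and $U_{1,d}$ is nonempty).

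For the inductive step, suppose $(V_x \mid x \in V(H))$ is an $H$-partition of $U_{h,d}$ of width at most $d$. Fix one of the $d$ complete $d$-ary trees, call its closure $U$, and let $R$ be the set of its $d$ roots (the top-level vertices of that tree in the forest defining $U_{h,d}$); note these $d$ roots are pairwise adjacent in $U_{h,d}$. Since each part has at most $d$ vertices and $R$ has $d$ vertices... actually the key counting point is different: consider the bag $B \subseteq V(H)$ consisting of all parts $V_x$ that contain at least one vertex of $R$. The subgraphs of $U_{h,d}$ obtained as closures of the $d$ subtrees hanging below the roots of $U$ are each a copy of $U_{h-1,d}$; there are $d$ of them inside $U$, and in fact there are $d^2$ copies of $U_{h-1,d}$ total (across all $d$ trees). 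Every vertex of such a copy $U'$ is adjacent in $U_{h,d}$ to at least one vertex of the corresponding root set. Since each part has size at most $d$ and $|R|=d$, the set $B$ of parts meeting $R$ has size at most $d$... that is not quite enough; instead I should argue: every vertex of the copy $U' \cong U_{h-1,d}$ is either in a part meeting $R$, or in a part all of whose vertices lie in $U'$, and the former parts induce a subgraph of $H$ on which $U'$ restricted gives an $H[B]$-partition...

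The cleaner route, which is the one I would actually carry out: let $U' \cong U_{h-1,d}$ be one of the $d$ copies hanging below the roots $R$ of one tree. Every vertex of $U'$ is adjacent (in $U_{h,d}$) to every root in $R$ that lies on the path from it to the tree root — in particular the restriction of the $H$-partition to $V(U') \cup R$ is an $H'$-partition for some subgraph $H'$ of $H$, and crucially, in $H'$ every part contained in $V(U')$ is adjacent to every part meeting $R$ that it must connect to. By the induction hypothesis applied to the copy $U' \cong U_{h-1,d}$, the subgraph $H_{U'}$ of $H$ induced by parts meeting $V(U')$ has treewidth at least $h-2$. Now I want to boost this to $h-1$ for $H$ itself. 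The point is that the $d$ parts (or rather, at most $d$ parts) meeting $R$ form a clique in $H$ that is complete to $H_{U'}$: indeed each root in $R$ is adjacent in $U_{h,d}$ to every vertex of $U'$, so each part meeting $R$ is adjacent in $H$ to every part meeting $V(U')$. If all $d$ roots of $R$ lay in distinct parts we would get a clique of size $d$ complete to $H_{U'}$ — but that clique might be smaller. Here is where the width bound bites: since we may choose which copy $U'$ to use, and there are $d$ of them below a single root set $R$, a counting argument shows we can find a copy $U'$ such that the parts meeting $V(U')$ are disjoint from the (at most $d$) parts meeting $R$... no — rather, I pick $U'$ so that no part meeting $R$ is "wasted" inside $U'$. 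Concretely: the $\le d$ parts meeting $R$ collectively contain at most $d^2 - d$ vertices outside $R$; these are distributed among the $d$ copies $U'_1,\dots,U'_d$; I instead want a copy where the roots of $R$ genuinely separate, which is automatic. Adding a clique of size $k$ that is complete to a graph $G_0$ raises treewidth: $\tw(G_0 + K_k) \ge \tw(G_0) + k$ when... hmm, that requires $k$ vertices disjoint from $G_0$.

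**The main obstacle**, and what the bulk of the proof must handle carefully, is exactly this bookkeeping: showing that one can always find a copy $U' \cong U_{h-1,d}$ hanging below the roots $R$ such that (a) the parts meeting $V(U')$ are disjoint from at least one part $V_{x_0}$ with $x_0$ meeting $R$ but $x_0$ having a vertex in $R$, and (b) that part $x_0$ is adjacent in $H$ to all parts meeting $V(U')$. Then $\tw(H) \ge \tw(H_{U'} \cup \{x_0\}) \ge \tw(H_{U'}) + 1 \ge (h-2) + 1 = h-1$, using the standard fact that adding a single vertex adjacent to all of a connected-dominating set of a graph raises treewidth by exactly one when done in a tree-decomposition (add the new vertex to every bag). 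The existence of such a $U'$ and $x_0$ follows from a pigeonhole argument on the $d$ copies $U'_1,\dots,U'_d$ versus the at most $d$ parts meeting $R$: at least one root $\rho \in R$ lies in a part $V_{x_0}$, and since $|V_{x_0}| \le d$ while $\rho$ is adjacent in $U_{h,d}$ to all $d^{h-1}$ descendants... so $V_{x_0}$ cannot contain a vertex from every one of the $d$ sub-copies unless $d \le d$, with equality forcing exactly one vertex per copy — and in that degenerate case one argues directly that $x_0$ is still complete to each $H_{U'_i}$ and adjacent to it as a genuinely new vertex in at least one copy by re-examining which vertices of $V_{x_0}$ lie where. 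I would write this as a short lemma: *in any $H$-partition of $U_{h,d}$ of width $\le d$, for any tree-root set $R$ there is a sub-copy $U' \cong U_{h-1,d}$ and a vertex $x_0 \in V(H)$ with $V_{x_0} \cap R \neq \emptyset$, $V_{x_0} \cap V(U') = \emptyset$, and $x_0 \in N_H(y)$ for every $y$ with $V_y \cap V(U') \neq \emptyset$* — once that is in hand, the induction closes cleanly.
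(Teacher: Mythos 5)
Your inductive strategy has a genuine gap at exactly the point you flag as ``the main obstacle'': the key lemma you state at the end is false. First, the structure of $U_{h,d}$ is not quite what you describe: it is the closure of $d$ disjoint trees, so it has $d$ roots $\rho_1,\dots,\rho_d$, \emph{one per tree}, and these roots are pairwise \emph{non-adjacent} (the closure only adds ancestor--descendant edges within a single tree). Deleting the root $\rho_i$ of the $i$-th tree leaves exactly \emph{one} copy $U'_i\cong U_{h-1,d}$, namely the closure of the $d$ child-subtrees of $\rho_i$ taken together --- not $d$ copies per root, and not $d^2$ copies in total --- and the only root complete to $U'_i$ is $\rho_i$ itself. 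Hence the only candidate apex for the copy $U'_i$ is the part containing $\rho_i$, and an adversary can always make that part intersect $V(U'_i)$: take $V_{z_i}=\{\rho_i,c_i\}$ with $c_i$ a child of $\rho_i$, for every $i$. Concretely, for $h=d=2$ the partition $\{\rho_1,a_1\},\{\rho_2,a_2\},\{b_1,b_2\}$ of $U_{2,2}$ (where $a_i,b_i$ are the children of $\rho_i$) has width $2$, yet no root lies in a part disjoint from its own copy, and the part of $\rho_1$ is not adjacent in $H$ to the part of $\rho_2$, so it cannot serve as an apex for $U'_2$ either. Your ``degenerate case'' is therefore the generic case, and the sketched fix does not go through.

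The paper sidesteps the apex bookkeeping entirely by proving the stronger statement that $H$ contains a clique on $h$ vertices, via a single greedy root-to-leaf descent. If $u\in V_x$ is not a leaf, then its $d$ children root pairwise disjoint subtrees while $V_x\setminus\{u\}$ has at most $d-1$ vertices, so the entire subtree below some child avoids $V_x$; since each chosen subtree is contained in the previous one, it also avoids all previously used parts. This produces vertices $u_1,\dots,u_h$ along a root-to-leaf path lying in pairwise distinct parts $x_1,\dots,x_h$, and since $\{u_1,\dots,u_h\}$ is a clique in $U_{h,d}$, the set $\{x_1,\dots,x_h\}$ is a clique in $H$, whence $\tw(H)\ge h-1$. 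If you want to keep an inductive formulation, the induction should follow this descent through a single tree rather than delete roots and reattach them as apexes.
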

\begin{proof}
Let $(V_x \mid x \in V(H))$ be an $H$-partition of $U_{h,d}$ with width at most $d$. 
Recall that $U_{h,d}$ is the closure of the disjoint union of $d$ complete $d$-ary trees of vertex-height $h$. In what follows, we refer to these underlying complete $d$-ary trees when we consider parent/child relations, subtrees  rooted at a given vertex, and leaves. For every $x \in V(H)$, every vertex $u \in V_x$ that is not a leaf in $U_{h,d}$ has a child $v$ such that the subtree rooted at $v$ in $U_{h,d}$ is disjoint from $V_x$. This implies that there is a sequence $u_1, \dots, u_{h}$ of vertices in $U_{h,d}$ such that $u_{i+1}$ is a child of $u_i$ for every $i \in [h-1]$, and $u_i \in V_{x_i}$ for every $i \in [h]$ with $x_1, \dots, x_{h}$ pairwise distinct. Since $\{u_1, \dots, u_{h}\}$ is a clique in $U_{h,d}$, $\{x_1, \dots, x_{h}\}$ is a clique in $H$. This shows that $K_{h} \subsetsim H$, which implies $\tw(H)\geq  h-1$.
\end{proof}

The next lemma proves the lower bound in \eqref{eq:bound_on_f}.

\begin{lemma}
\label{TreeDepthLowerBound}
For every graph $X$, 
$\utw(\cgG_X)\geq\td(X)-2$. 
\end{lemma}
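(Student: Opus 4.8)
The plan is to deduce \cref{TreeDepthLowerBound} from \cref{lem-uhd} by exhibiting, for each value of $d$, an $X$-minor-free graph whose treewidth is small but for which any low-width $H$-partition forces $\tw(H)$ to be large. Set $h := \td(X)$. The natural candidate is $G := U_{h-1,d}$. First I would check that $U_{h-1,d}$ is $X$-minor-free: since $\td(U_{h-1,d}) = h-1 < h = \td(X)$ and treedepth is minor-monotone, $X$ cannot be a minor of $U_{h-1,d}$. Next I would record that $\tw(U_{h-1,d})$ is bounded (in fact $\tw(U_{h-1,d}) \le h-2$, as the closure of a forest of vertex-height $h-1$ has treewidth at most $h-2$), so this graph lies in a class of bounded treewidth, which is the relevant regime for underlying treewidth.

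The core of the argument is then the following: by definition of $\utw(\cgG_X)$, there is a function $f$ such that every $G \in \cgG_X$ embeds as a subgraph of $H \boxtimes K_{f(\tw(G))}$ for some $H$ with $\tw(H) \le \utw(\cgG_X)$. Apply this to $G = U_{h-1,d}$ with $d := f(h-2)$ (or any $d$ at least this value, using that $\tw(U_{h-1,d}) \le h-2$ uniformly in $d$); by \cref{ObsPartitionProduct}, $U_{h-1,d}$ then has an $H$-partition of width at most $d$ with $\tw(H) \le \utw(\cgG_X)$. Now \cref{lem-uhd} applied with parameters $h-1$ and $d$ gives $\tw(H) \ge (h-1) - 1 = h-2$. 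Combining, $\utw(\cgG_X) \ge h - 2 = \td(X) - 2$, as desired. A small subtlety: the definition of $\utw$ quantifies $f$ existentially, and we need $d = f(\tw(U_{h-1,d}))$ to be a fixed integer; this is fine because $\tw(U_{h-1,d}) \le h-2$ does not depend on $d$, so we may first fix $d := f(h-2)$ and then the graph $U_{h-1,d}$ is well-defined.

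The only genuinely delicate point — and the step I would write most carefully — is the bound $\tw(U_{h-1,d}) \le h-2$, or at least that it is bounded by some quantity depending only on $h$ (any bound suffices, since $f$ is then evaluated at a fixed integer). This follows because $U_{h-1,d}$ is a subgraph of the closure of a rooted forest of vertex-height $h-1$, and the closure of any rooted forest of vertex-height $k$ has treewidth at most $k-1$: take the tree-decomposition indexed by the forest itself, with the bag at a vertex $v$ equal to the set of ancestors of $v$ (including $v$), which has size at most $k$. One should double-check this satisfies the two tree-decomposition axioms for the closure — edges of the closure join ancestor–descendant pairs, which share a bag, and the bags containing a fixed vertex $w$ form the subtree of descendants of $w$, which is connected. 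If one prefers to avoid even this computation, it suffices to invoke that $\td(U_{h-1,d}) = h-1$ implies $\tw(U_{h-1,d}) \le \td(U_{h-1,d}) - 1 = h-2$, which is a standard inequality between treedepth and treewidth. Everything else is bookkeeping with the definitions and a direct appeal to \cref{lem-uhd}.
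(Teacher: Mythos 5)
Your proposal is correct and follows essentially the same route as the paper: both take the universal graph $U_{\td(X)-1,d}$ with $d=f(\td(X)-2)$, note it is $X$-minor-free because its treedepth is smaller than $\td(X)$, and then invoke \cref{ObsPartitionProduct} and \cref{lem-uhd} to force $\tw(H)\geq\td(X)-2$. The extra care you take with the bound $\tw(U_{\td(X)-1,d})\leq\td(X)-2$ (which is in fact an equality, since $U_{\td(X)-1,d}$ contains $K_{\td(X)-1}$, so the function $f$ is evaluated at exactly the right point) is a detail the paper leaves implicit.
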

\begin{proof}
    Let $X$ be a graph and let $h=\td(X)-1$. By the definition of $\utw(\cdot)$ together with \Cref{ObsPartitionProduct}, there exists an integer-valued function $f$ such that every $X$-minor-free graph $G$ has an $H$-partition of width at most $f(\tw(G))$ for some graph $H$ of treewidth at most $\utw(\cgG_X)$. Let $d=f(\td(X)-2)$. 
    Note that $X$ has larger treedepth than $U_{h,d}$, therefore $U_{h,d}\in \cgG_X$.
    By~\Cref{lem-uhd}, every $H$-partition of $U_{h,d}$ of width at most $d$ satisfies $\tw(H)\geq h-1$. 
    Hence $\utw(\cgG_X)\geq h-1 = \td(X)-2$.
\end{proof}



The next result, which is an adaptation of Theorem~19 in \citep{DJMMUW20}, proves the lower bound in \eqref{ApexStructure}.


\begin{lem}\label{LowerBoundtX}
    Let $c$ be a positive integer, and let $X$ be a graph.
    There exists an $X$-minor-free graph $G$ such that for every graph $H$ and every path $P$, if $G\subsetsim H\boxtimes P\boxtimes K_c$, then $\tw(H)\geq \td(X)-2$.
\end{lem}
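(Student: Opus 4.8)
The plan is to reuse the graphs $U_{h,d}$, exactly as in the proof of \Cref{TreeDepthLowerBound}, but with the arity $d$ chosen large in terms of $c$. Set $h := \td(X) - 1$ and $d := 3c$, and let $G := U_{h,d}$. As observed earlier (and as in the proof of \Cref{TreeDepthLowerBound}), $\td(G) = h = \td(X) - 1 < \td(X)$, so since treedepth is minor-monotone, $G$ is $X$-minor-free. We may assume $\td(X) \ge 3$, so that $h \ge 2$; when $\td(X) \le 2$ the bound $\tw(H) \ge \td(X) - 2 \le 0$ holds trivially as soon as $G$, and hence $H$, is nonempty.

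Now suppose $G \subsetsim H \boxtimes P \boxtimes K_c$ for some graph $H$ and path $P$. By \Cref{ObsLayeredPartitionProduct} there is an $H$-partition $(V_x \mid x \in V(H))$ of $G$ together with a layering $\mathcal{L} = (L_0, L_1, \dots)$ such that $|V_x \cap L_i| \le c$ for all $x \in V(H)$ and all $i$. The crucial point — and the only genuinely new idea compared with \Cref{lem-uhd} — is that every connected component $T$ of $G = U_{h,d}$, being the closure of a complete $d$-ary tree of vertex-height $h$, has a dominating vertex, namely its root $r$. Hence in any layering, $V(T)$ occupies at most three consecutive layers: if $r \in L_i$, then every other vertex of $T$ is a neighbour of $r$ and so lies in $L_{i-1} \cup L_i \cup L_{i+1}$. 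It follows that $|V_x \cap V(T)| \le 3c = d$ for every component $T$ and every $x \in V(H)$; in other words, inside each component the layered-partition hypothesis collapses to an ordinary $H$-partition of width at most $d$.

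It then remains to replay the clique-chasing argument from the proof of \Cref{lem-uhd} inside one fixed component $T$. Since the restricted partition $(V_x \cap V(T) \mid x \in V(H))$ has width at most $d$ and $T$ arises from a $d$-ary tree, every non-leaf vertex $u \in V(T)$, say $u \in V_x$, has a child whose rooted subtree avoids $V_x$ (otherwise the $d$ pairwise-disjoint child-subtrees, together with $u$, would force $|V_x \cap V(T)| \ge d+1$). Iterating from the root of $T$ yields a root-to-leaf path $u_1, \dots, u_h$ in $T$ whose vertices lie in pairwise distinct parts $V_{x_1}, \dots, V_{x_h}$; since $\{u_1, \dots, u_h\}$ is a clique in $T$ (consecutive vertices are in the parent/child relation, so all pairs are adjacent in the closure), $\{x_1, \dots, x_h\}$ is a clique in $H$, whence $K_h \subsetsim H$ and $\tw(H) \ge h - 1 = \td(X) - 2$.

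I do not expect a serious obstacle; the proof is essentially \Cref{lem-uhd} plus one observation. The one point requiring a little care is the bookkeeping of the arity against the layer count: each radius-$1$ component is confined to three layers, so the effective width within a component is $3c$, and the arity $d$ must be at least $3c$ so that the ``some child-subtree avoids $V_x$'' step retains the needed slack. Choosing $d := 3c$ does the job, and the degenerate small-treedepth cases are disposed of separately as indicated above.
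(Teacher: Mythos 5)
Your proof is correct and follows essentially the same route as the paper's: take $G=U_{h,d}$ with $h=\td(X)-1$ and $d=3c$, use \Cref{ObsLayeredPartitionProduct}, observe that radius $1$ confines everything to three layers, and then run the clique-chasing argument of \Cref{lem-uhd} to extract $K_h\subsetsim H$. Your component-wise treatment is in fact slightly more careful than the paper's, which asserts that \emph{any} layering of the (disconnected) graph $U_{h,d}$ has at most three layers; strictly speaking only each component is confined to three consecutive layers, and your observation that the argument of \Cref{lem-uhd} needs only the bound $|V_x\cap V(T)|\leq d$ within a single component is exactly the right fix.
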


\begin{proof}
    Fix $h=\td(X)-1$ and $d=3c$. Since $h=\td(U_{h,d})>\td(X)$, we conclude that $U_{h,d}$ is $X$-minor-free. 
    Now suppose that $U_{h,d} \subsetsim H\boxtimes P\boxtimes K_c$ for some graph $H$ and path $P$. 
    We claim that $\tw(H)\geq h-1=\td(X)-2$, 
    which would complete the proof.

By \Cref{ObsLayeredPartitionProduct} there is an $H$-partition $(V_x \mid x \in V(H))$ of $U_{h,d}$ and a layering $\mathcal{L}$ such that $|V_x \cap L| \leq c$ for every $x \in V(H)$ and $L \in \mathcal{L}$. Since $U_{h,d}$ has radius $1$, any layering of $U_{h,d}$ has at most three layers. So $|V_x|\leq 3c$ for every $x\in V(H)$. Thus $(V_x \mid x \in V(H))$ is an $H$-partition of $U_{h,d}$ with width at most $3c$.
Now \Cref{lem-uhd} implies $\tw(H)\geq h-1=\td(X)-2$, as desired.
\end{proof}

\section{Attached Models}
\label{sec:attached:better}
Let $G$ and $H$ be graphs. Then $H$ is a \emph{minor} of $G$ if a graph isomorphic to $H$ can be obtained from $G$ by deleting edges, deleting vertices and contracting edges. If $H$ is not a minor of $G$, then $G$ is \emph{$H$-minor-free}. A \emph{model} of $H$ in $G$ is a family $(B_x \mid x \in V(H))$ of pairwise disjoint subsets of $V(G)$ such that: 
\begin{enumerate}
    \item for every $x \in V(H)$, the subgraph induced by $B_x$ is non-empty and connected.
    \item for every edge $xy \in E(H)$, there is an edge between $B_x$ and $B_y$ in $G$.
\end{enumerate}
The sets $B_x$ for $x \in V(H)$ are called the \emph{branch sets} of the model. Note that $H$ is a minor of $G$ if and only if there is an $H$-model in $G$.

The \emph{join} of graphs $G_1$ and $G_2$, denoted by $G_1 \oplus G_2$, is the graph obtained from the disjoint union of $G_1$ and $G_2$ by adding all edges between vertices in $G_1$ and vertices in $G_2$. Similarly, given a set $U$ and a graph $G$ with $V(G)\cap U = \emptyset$,  denote by $U\oplus G$ the graph with vertex set $U \cup V(G)$ and edge set $E(G) \cup\{uv \mid u \in U, v \in V(G)\} \cup \{uu' \mid u,u' \in U, u\neq u'\}$.

Let $G$ and $H$ be graphs. Let $a$ and $k$ be integers with $a\geq k\geq0$, and let $R_1,\ldots,R_ k$ be pairwise disjoint subsets of $V(G)$. 
A model $(B_v\mid v\in V(K_a\oplus H))$ of $K_a\oplus H$ in $G-\bigcup_{i=1}^k R_i$ is \emph{$\{R_1,\dots,R_k\}$-attached} in $G$
if there are $k$ distinct vertices $v_1,\ldots,v_k$ of $K_a$ such that 
$B_{v_i}$ contains a neighbor of a vertex in $R_i$ in $G$ for each $i\in[k]$.
If $R = \{r_1, \dots, r_k\} \subseteq V(G)$ is a set of $k$ vertices, then
we say that a model of $K_a\oplus H$ in $G$ is \emph{$R$-attached} in $G$ if it is $\{\{r_1\},\dots, \{r_k\}\}$-attached in $G$.

In this paper, a \emph{separation} in $G$ is a pair $(A,B)$ of subgraphs of $G$ such that $A \cup B=G$ (where $V(A) \subseteq V(B)$ or $V(B) \subseteq V(A)$ is allowed). The \emph{order} of $(A,B)$ is $|V(A) \cap V(B)|$.
Let $G$ be a graph and $S,T$ be two sets of vertices of $G$.
Let $k$ be a positive integer.
A \emph{linkage} of order $k$ between $S$ and $T$ is a family of $k$ vertex-disjoint paths from $S$ to $T$ in $G$, with no internal vertices in  $S\cup T$. 
Menger's Theorem asserts that either $G$ contains a linkage of order $k$ between $S$ and $T$ or 
there is a separation $(A,B)$ of $G$ of order at most $k-1$ such that $S \subseteq V(A)$ and $T \subseteq V(B)$.

The next lemma and  corollary are tools to be used in the main decomposition lemma that follows (\Cref{lemma:find_all_cuts_and_the_simple_part_in_the_middle}). 
\Cref{lemma:find_attached_minors_or_cuts_preserve_model} is inspired by a result of Kawarabayashi~\cite{kawarabayashi_rooted_2004}. 

\newcommand{\aaa}{a}

For a graph $G$ and a subset $R$ of the vertices of $G$, let $G^{+R}$ be the graph obtained from $G$ by adding all missing edges between vertices of $R$.

\begin{lemma}
\label{lemma:find_attached_minors_or_cuts_preserve_model}
Let $H$ be a graph, and let $\aaa$ and $k$ be positive integers with $\aaa \geq 2k$.
For every graph $G$ and every set $R$ of $k$ vertices of $G$ such that there exists a model $\mathcal{M}=(B_x \mid x\in V(K_\aaa\oplus H))$ of $K_\aaa \oplus H$ in $G^{+R}$, at least one of the following 
properties hold:
\begin{enumerate}
    \item
    \label{item:claim_attached_model}
    $G$ contains an $R$-attached model of $K_{\aaa-k} \oplus H'$, for some graph $H'$ obtained from $H$ by removing at most $2k$ vertices, 
    \item
    \label{item:claim_R_cut} 
    there is a separation $(A,B)$ in $G$ of order at most $k-1$ and a vertex $z$ in $K_a$ 
    such that 
    $R\subseteq V(A)$ and $B_z\subseteq V(B)-V(A)$.
\end{enumerate}
\end{lemma}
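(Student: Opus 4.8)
The plan is to strip from the model the few branch sets meeting $R$, route vertex-disjoint paths from $R$ into the $K_a$-part of what remains, and then either absorb these paths into the model to obtain~\ref{item:claim_attached_model} or read off the separation~\ref{item:claim_R_cut} from the failure of the routing; this follows the pattern of Kawarabayashi's rooted-minor lemma~\cite{kawarabayashi_rooted_2004}. First I would clean up the model. Put $T=\{x\in V(K_a\oplus H):B_x\cap R\neq\emptyset\}$, so $|T|\le|R|=k$. For $x\notin T$ the set $B_x$ is disjoint from $R$, so every edge of $G^{+R}$ inside $B_x$, and every edge of $G^{+R}$ between two such branch sets, is already an edge of $G$ not incident to $R$; hence $(B_x\mid x\in V(K_a\oplus H)\setminus T)$ is a model in $G-R$ of the graph obtained from $K_a\oplus H$ by deleting the vertices in $T$, i.e.\ of $K_{a'}\oplus H''$ where $a'=a-|T\cap V(K_a)|\ge a-k\ge k$ and $H''$ is obtained from $H$ by deleting the at most $k$ vertices of $T\cap V(H)$. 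Call this model $\mathcal{M}'$; let $\Gamma$ be obtained from $G$ by contracting each branch set of $\mathcal{M}'$ to a single vertex, so that $R\subseteq V(\Gamma)$; and let $U$ be the set of all branch-vertices of $\mathcal{M}'$, among which $a'\ge k$ are $K$-branch-vertices.

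Now I would apply Menger's theorem in $\Gamma$ between $R$ and $U$. Suppose first it returns a linkage of order $k$: vertex-disjoint paths $Q_1,\dots,Q_k$, one starting at each $r_i\in R$, ending in $k$ distinct branch-vertices and with no interior branch-vertex. Whenever $Q_i$ ends at an $H''$-branch-vertex $b_y$, I would extend $Q_i$ by a single edge to a $K$-branch-vertex used by no $Q_j$ (possible as $b_y$ is adjacent to all $a'\ge k$ $K$-branch-vertices) and delete $B_y$ from the model---at most $k$ branch sets in all. Un-contracting, and using that each retained branch set of $\mathcal{M}'$ is connected in $G-R$, the $Q_i$ become vertex-disjoint paths $P_1,\dots,P_k$ in $G$, with $P_i$ running from $r_i$ to a vertex of a distinct $K$-branch-set and internally disjoint from $R$ and from every retained branch set. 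Absorbing $V(P_i)\setminus\{r_i\}$ into that $K$-branch-set, and keeping any $a-k$ of the surviving $K$-branch-sets (including the $k$ reached by the $P_i$), we get an $R$-attached model of $K_{a-k}\oplus H'$ in $G-R$ where $H'$ is $H$ with at most $|T\cap V(H)|+k\le 2k$ vertices deleted; this is~\ref{item:claim_attached_model}.

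Suppose instead Menger returns a separation of $\Gamma$ of order at most $k-1$ separating $R$ from $U$. Since a set of at most $k-1$ vertices cannot meet all $a'\ge k$ $K$-branch-vertices, some $K$-branch-vertex $b_z$ is outside the separator; one would then like to conclude that un-contracting the separator yields a separation of $G$ of order at most $k-1$ with $R$ on one side and the whole branch set $B_z$ (which, being disjoint from the at most $k-1$ branch sets hit by the separator, is disjoint from its un-contraction) strictly on the other---giving~\ref{item:claim_R_cut} with $z\in V(K_a)\setminus T$.

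The last step is where I expect the real difficulty to lie. A minimum $R$--$U$ separator in $\Gamma$ may legitimately pass through contracted branch-vertices $b_x$, and un-contracting such a vertex naively replaces it by the whole set $B_x$, inflating the order of the separation in $G$. Handling this requires working with an extremal choice of model and linkage---for instance, over all models obtained from $\mathcal{M}'$ by deleting at most a further $2k-|T\cap V(H)|$ branch sets of $H''$, and all linkages from $R$ into the $K$-branch-sets, maximize the number of $r_i$ reaching distinct $K$-branch-sets and then minimize total length---so that every branch-vertex forced into the separator is either an $H''$-branch-vertex chargeable to the $H$-deletion budget, or can be by-passed using the clique structure of the $K_a$-part (every two $K$-branch-sets being adjacent). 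Making these two budgets---at most $k$ vertices lost from the clique $K_a$ and at most $2k$ from $H$---balance is the combinatorial heart of the proof, and is exactly where the hypothesis $a\ge 2k$, rather than merely $a\ge k$, is used.
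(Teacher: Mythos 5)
Your overall strategy (Menger between $R$ and the model; linkage $\Rightarrow$ item~\ref{item:claim_attached_model}, separation $\Rightarrow$ item~\ref{item:claim_R_cut}) is the same as the paper's, and your linkage case is essentially sound: the counting with $a'\geq a-k\geq k$ does let you re-route the at most $k$ paths that terminate in $H''$-branch sets into unused $K$-branch sets, at a total cost of at most $2k$ deleted vertices of $H$. But the separation case has a genuine gap, which you correctly locate and then do not close. Item~\ref{item:claim_R_cut} demands a separation of $G$ itself of order at most $k-1$, whereas Menger applied in the contracted graph $\Gamma$ only yields a separator made of vertices of $\Gamma$, some of which are contracted branch-vertices; un-contracting these inflates the order arbitrarily. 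Your proposed repair (an extremal choice of model and linkage, charging separator vertices to the $H$-deletion budget or by-passing them via the clique structure) does not work as described: a minimum $R$--$U$ separator in $\Gamma$ may consist of $k-1$ contracted $K$-branch-vertices of unbounded size, and the lemma's conclusion offers no budget that can absorb actual vertices of $G$ in a separator --- the only budgets available ($k$ clique vertices, $2k$ vertices of $H$) live in the pattern graph, not in $G$. Your closing remark that $a\geq 2k$ is what rescues this step is also misplaced; in the paper that hypothesis is used in the \emph{linkage} case, to find an injection from the $H$-branch sets lying on the linkage into spare $K_a$-branch sets.

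The paper avoids the problem by never contracting into an auxiliary graph. It takes a vertex-minimal counterexample $(G,R,\mathcal{M})$ and first proves, by contracting a single edge inside an offending branch set and invoking minimality (which requires a further round of Menger inside the side $A$ of the resulting separation, plus a recursive application of the lemma to the side $B$ with the new interface $R'$), that every branch set is either a singleton or a subset of $R$. Only then is Menger applied, in $G$ itself, between $R$ and the union $M$ of the singleton branch sets; a separator of order at most $k-1$ in $G$ then misses at least one $K_a$-branch set outright (since at most $k+(k-1)<a$ of them can meet $V(A)$), which is exactly item~\ref{item:claim_R_cut}. Some such reduction to singleton branch sets (or an equivalent device) is the missing ingredient in your proposal.
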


\begin{proof}
    Suppose that the lemma is false and let $G$ be a graph with the minimum number of vertices for which there exist $R$ and $\cgM$ as in the statement such that neither~\ref{item:claim_attached_model}~nor~\ref{item:claim_R_cut} holds. Fix such a set $R$ and model $\cgM = (B_x \mid x \in V(K_a\oplus H))$.

    We claim that for each $x\in V(K_a\oplus H)$ we have $B_x\subseteq R$ or $B_x$ is a singleton. 
    Suppose the opposite, that is, 
    there exists a branch set $U$ of $ \cgM$ such that $|U| > 1$ and $U$ is not a subset of $R$. In particular, $G[U]$ contains an edge $e=uv$ such that $u \in V(G) - R$ and $v \in V(G)$.
    Consider the graph $G_1$ obtained from $G$ by contracting $e$. Contracting an edge inside a branch set of a model preserves the model. 
    Let $\mathcal{M}_1 = (B^1_x \mid x \in V(K_a\oplus H))$ be the resulting model of $K_\aaa\oplus H$ in $G_1^{+R}$. By the minimality of $G$, the lemma holds for $G_1$, $R$, and $\cgM_1$. 
    If item~\ref{item:claim_attached_model} holds, that is, 
    $G_1$ contains an $R$-attached model of $K_{\aaa-k} \oplus H'$, where $H'$ is a graph obtained from $H$ by removing at most $2k$ vertices, then $G$ does as well, a contradiction.
    Therefore, item~\ref{item:claim_R_cut} holds and we fix a separation $(A_1,B_1)$ in $G_1$ of order at most $k-1$ and $z\in V(K_a)$ such that 
    $R \subseteq V(A_1)$ and $B^1_z\subseteq V(B_1)-V(A_1)$.
    By uncontracting $e$, we obtain a separation $(A,B)$ in $G$ of order at most $k$ such that 
    $R \subseteq V(A)$ and $B_z\subseteq V(B)-V(A)$. 
    This separation has to be of order exactly $k$, in particular, $u$ and $v$ are both in $V(A) \cap V(B)$, as otherwise, \cref{item:claim_R_cut} would be satisfied for $G$, $R$, and $\cgM$.
    
    Let $R' = V(A) \cap V(B)$.
    By Menger's Theorem, either there exists a linkage of order $k$ between $R$ and $R'$ in $A$, or there exists a separation $(C,D)$ in $A$ of order at most $k-1$ such that $R\subseteq V(C)$ and $R'\subseteq V(D)$. 
    In the latter case, we obtain a separation $(C,D\cup B)$ in $G$ of order at most $k-1$ such that 
    $R\subset V(C)$ and $B_z\subseteq V(D\cup B)-V(C)$. Thus,~\ref{item:claim_R_cut} is satisfied for $G$,$R$, and $\cgM$, which is a contradiction. 
    Therefore, there exists a linkage $\mathcal{L}$ of order $k$ between $R$ and $R'$ in $A$.
    Since $|R|=k$, $|R'|=k$, and not all vertices of $R'$ are in $R$ (since $u\in R'-R$), at least one vertex of $R$ is in $V(A) - V(B)$. 
    Since $z$ is adjacent to every other vertex in $K_a\oplus H$ and $B_z \subseteq V(B)-V(A)$, 
    every branch set $Y$ in $\mathcal{M}$ contains a vertex of $B$, and thus  $B^{+R'}[Y]$ is non-empty and connected. 
    Let $\mathcal{M}'=(B'_x \mid x\in V(K_a\oplus H))$ be obtained from $\mathcal{M}$ by restricting each branch set to the graph $B$. 
    It follows that $\mathcal{M}'$ is a model of $K_a\oplus H$ in $B^{+R'}$. 
    Since $B$ has fewer vertices than $G$, the triple $B,R',\mathcal{M}'$ satisfy the lemma. 
    If \cref{item:claim_attached_model} is satisfied, that is, if there is an $R'$-attached model of $K_{\aaa-k} \oplus H'$ in $B$, where $H'$ is a graph obtained from $H$ by removing at most $2k$ vertices, then we can extend the model using $\mathcal{L}$ to obtain an $R$-attached model of $K_{a-k} \oplus H'$ in $G$, a contradiction.
    Therefore, \cref{item:claim_R_cut} is satisfied for $B,R',\mathcal{M}'$, that is, there is a separation $(A',B')$ in $B$ of order at most $k-1$ 
    and $z'\in V(K_a\oplus H)$ with $R' \subseteq V(A')$ and $B_{z'}\subseteq V(B')-V(A')$. Observe that $(A \cup A',B')$ is a separation in $G$ of order at most $k-1$ such that $R\subset V(A \cup A')$ and $B_{z'}\subseteq V(B') - V(A\cup A')$, a contradiction.
    This proves that each branch set of $\cgM$ is either a singleton or a subset of $R$.

    Let $M$ be the union of all branch sets in $\mathcal{M}$ that does not intersect $R$.
    By Menger's Theorem, either there is a linkage of order $k$ between $R$ and $M$ in $G$, or there is a separation $(A,B)$ of $G$ of order at most $k-1$ with $R \subseteq V(A)$, $M \subseteq V(B)$. 
    Suppose the latter is true. 
    Observe that for every vertex $z$ in $K_a$, the corresponding branch set $B_z$ is either contained in $M$ or intersects $V(A)\cap V(B)$, since $z$ is adjacent to every other vertex of $K_a\oplus H$ (and $M$ is not empty). 
    Thus, since $a > k-1$, there is a choice of $z$ such that $B_z$ is disjoint from $V(A)$.     
    Hence, item~\ref{item:claim_R_cut} holds.
    Now, assume that there is a linkage $\mathcal{L}$ of order $k$ between $R$ and $M$ in $G$.
    
    Let $W_{1K},W_{1H},W_{2K},W_{2H},W_{3K},W_{3H}$ be the partition of $V(K_a\oplus H)$ defined by $V(K_a)=\bigcup_{i\in[3]}W_{iK}$, $V(H)=\bigcup_{i\in[3]}W_{iH}$, and
    \begin{align*}
    W_{1K}\cup W_{1H} &= \{x \in V(K_a\oplus H) \mid B_x \subseteq R\},\\
    W_{2K}\cup W_{2H} &= \{x \in V(K_a\oplus H) \mid B_x \subseteq \bigcup_{L \in \mathcal{L}} V(L) - R\},\\
    W_{3K}\cup W_{3H} &= V(K_a\oplus H) - (W_{1K}\cup W_{1H} \cup W_{2K}\cup W_{2H}).
    \end{align*}
    See \Cref{fig:lemma:using:linkage:sacrifice} for an illustration. First, we argue that $|W_{2H}| \leq |W_{3K}|$. Observe that
        \begin{align*}
            a &= |W_{1K}| + |W_{2K}| + |W_{3K}|,\\
            k &= |W_{2H}| + |W_{2K}|.
        \end{align*}
    Combining the above with $a \geq 2k$ and $|W_{1K}| \leq k$, we obtain
        \[|W_{3K}| = a-|W_{1K}| - |W_{2K}| = a-|W_{1K}|-(k - |W_{2H}|) \geq 2k-k-k+|W_{2H}| = |W_{2H}|.\]
    It follows that there exists an injective mapping $f: W_{2H} \rightarrow W_{3K}$.

    Let $\aaa'= \aaa - |W_{1K}|$, and let $H' = H - (W_{1,H} \cup W_{2H})$. Note that $H'$ is a graph obtained from $H$ by removing at most $2k$ vertices.
    Now, define a model $\mathcal{M}'=(B'_x\mid x\in W_{2K}\cup W_{3K}\cup V(H'))$ 
    of $K_{\aaa'} \oplus H'$ as follows:
    \[
    B'_x=
    \begin{cases}
    B_x \cup B_{f^{-1}(x)}&\textrm{ if $x \in f(W_{2H}) \subset W_{3K}$,} \\
    B_{x}&\textrm{ otherwise.} \\
    \end{cases}
    \]
    See \Cref{fig:lemma:using:linkage:sacrifice} again. Now, $\mathcal{L}$ is a linkage of order $k$ between $R$ and $k$ distinct branch sets $B'_x$ with $x \in V(W_{2K} \cup W_{3K})$. We can extend the model using $\cgL$. Namely, for each path in $\cgL$ we add all its internal vertices to the unique branch set that intersects the path. We obtain an $R$-attached model of $K_{\aaa'} \oplus H'$, hence, \ref{item:claim_attached_model} holds. This contradiction concludes the proof.
    \begin{figure}[!htbp]
       \centering 
       \includegraphics[scale=1]{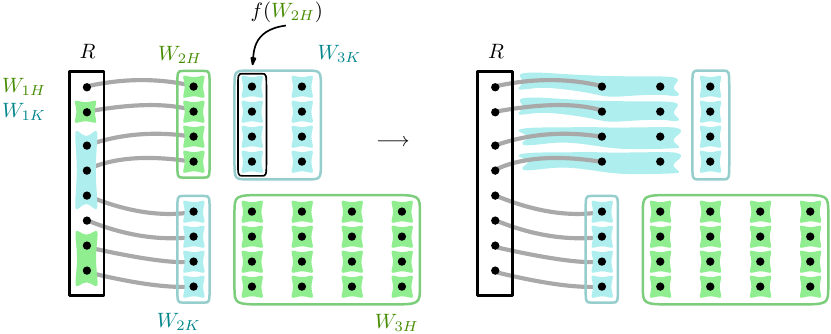} 
       \caption{Illustration of the proof of \Cref{lemma:find_attached_minors_or_cuts_preserve_model}.
       Edges are not drawn. 
       On the left, we show an example of a model $\cgM$ of $K_a \oplus H$. Each branch set is either a singleton or is contained in $R$. 
       The blue shapes are the branch sets of the vertices in $K_a$. 
       The green shapes are the branch sets of the vertices of $H$. 
       Bold lines represent the linkage. 
       We mark all the sets $W_{iC}$ for $i \in \{1,2,3\}$ and $C \in \{K,H\}$. 
       Now we briefly recall the process of obtaining $\cgM'$ from $\cgM$ described in the proof of \Cref{lemma:find_attached_minors_or_cuts_preserve_model}.
       The result of the process is depicted on the right of the figure.
       First, remove all the branch sets contained in $R$, that is, the branch sets corresponding to vertices of $W_{1K}$ and $W_{1H}$. 
       In an $R$-attached model, we have to attach blue branch sets to $R$. 
       Therefore, we enlarge the blue branch sets in $f(W_{2H})$ by merging them with the ones in $W_{2H}$. 
       We lost at most $k$ blue branch sets and at most $2k$ green branch sets. 
       Thus, the new model is a model of $K_{a'}\oplus H'$, where $a' \geq a-k$, and $H'$ is a graph obtained from $H$ by removing at most $2k$ vertices. 
       Finally, use the linkage to extend the branch sets so that the model is $R$-attached.} 
       \label{fig:lemma:using:linkage:sacrifice} 
    \end{figure} 
\end{proof}

\begin{cor}\label{lemma:find_rooted_cliques}
    Let $G$ be a connected graph, let $k$ be a positive integer, and let $R$ be a set of $k$ vertices of $G$.
    If $K_{2k}$ is a minor of $G^{+R}$, then for some $\ell\in[k]$ there is a separation $(A,B)$ in $G$ of order $\ell$ such that $R\subseteq V(A)$, and $B$ contains a $V(A) \cap V(B)$-attached model of~$K_{\ell}$.
\end{cor}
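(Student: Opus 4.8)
The plan is to induct on $k$, in each step applying \Cref{lemma:find_attached_minors_or_cuts_preserve_model} with $H$ the empty graph and $\aaa=2k$: then $K_\aaa\oplus H=K_{2k}$, the hypothesis $\aaa\ge 2k$ holds, and since deleting vertices from the empty graph again yields the empty graph, the graph $H'$ of conclusion~\ref{item:claim_attached_model} is also empty, so that conclusion reads ``$G$ contains an $R$-attached model of $K_{\aaa-k}=K_k$''. As $K_{2k}$ is a minor of $G^{+R}$, there is a model of $K_{2k}$ in $G^{+R}$, the lemma applies, and one of~\ref{item:claim_attached_model} and~\ref{item:claim_R_cut} holds. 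Throughout we adopt the harmless convention that the ``$B$-side'' of every separation we produce is an induced subgraph of the ambient graph (replacing $B$ by the induced subgraph on $V(B)$ preserves $A\cup B=G$ and does not change $V(B)$).

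If~\ref{item:claim_attached_model} holds, let $\mathcal{M}$ be the $R$-attached model of $K_k$ in $G$. Its union induces a connected subgraph of $G-R$, hence lies in a single component $C$ of $G-R$; put $B:=G[V(C)\cup R]$ and $A:=G[V(G)-V(C)]$. A direct check shows $(A,B)$ is a separation of $G$ of order $k$ with $V(A)\cap V(B)=R\subseteq V(A)$, and that $\mathcal{M}$ — all of whose branch sets, internal edges, mutual adjacencies and edges to $R$ lie in $B$ — is a $(V(A)\cap V(B))$-attached model of $K_k$ in $B$; so the statement holds with $\ell=k$. Observe also that in conclusion~\ref{item:claim_R_cut} one necessarily has $|V(A)\cap V(B)|\ge 1$, since a separation of order $0$ of the connected graph $G$ cannot have the nonempty set $R$ on one side and a nonempty branch set strictly on the other. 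In particular, for $k=1$ only~\ref{item:claim_attached_model} can occur, which anchors the induction.

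So assume~\ref{item:claim_R_cut}: a separation $(A,B)$ of $G$ of order at most $k-1$ and a vertex $z$ of $K_{2k}$ with $R\subseteq V(A)$ and $B_z\subseteq V(B)-V(A)$; put $R':=V(A)\cap V(B)$ and $k':=|R'|$, so $1\le k'\le k-1$. Restrict each branch set $B_y$ of the model to $V(B)$. Since $z$ dominates $K_{2k}$ and $B_z$ lies on the private side of the separation, every $B_y\cap V(B)$ is nonempty; short-cutting each maximal excursion of a connecting path into $V(A)-V(B)$ through its two endpoints in the clique $R'$ shows that each $B_y\cap V(B)$ is connected in $B^{+R'}$ and that these sets are pairwise adjacent in $B^{+R'}$. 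Hence $B^{+R'}$ has a $K_{2k}$-minor, and it lies in the component $D$ of $B^{+R'}$ that contains $B_z$. Moreover $R'\subseteq V(D)$: the component of $B-R'$ containing $B_z$ must have a neighbour in $R'$ — otherwise it is a component of $B$, and then, since its vertices avoid $V(A)$ and $(A,B)$ is a separation, it is a component of $G$ missing the nonempty set $R$, contradicting connectedness of $G$ — and $R'$ is a clique of $B^{+R'}$. Consequently $D=(G[V(D)])^{+R'}$ is connected, $D^{+R'}=D$ contains $K_{2k}$ (hence $K_{2k'}$) as a minor, and $R'$ is a $k'$-subset of $V(D)$ with $k'<k$. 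By induction there are $\ell\in[k']$ and a separation $(A_1,B_1)$ of $D$ of order $\ell$ with $R'\subseteq V(A_1)$ and $B_1$ containing a $(V(A_1)\cap V(B_1))$-attached model of $K_\ell$.

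It remains to lift $(A_1,B_1)$ to $G$: take $B^\star:=G[V(B_1)]$ and $A^\star:=G[V(G)-(V(B_1)-V(A_1))]$. Using that the edges of $D$ absent from $G[V(D)]$ all lie inside $R'\subseteq V(A_1)$, that any vertex of $V(B_1)-V(A_1)$ avoids $R'$ and so lies in $V(B)-V(A)$ with all its $G$-neighbours inside $V(B)$, and that $D$ is a component of $B^{+R'}$, one verifies that $(A^\star,B^\star)$ is a separation of $G$ of order $|V(A_1)\cap V(B_1)|=\ell\in[k]$ with $R\subseteq V(A)\subseteq V(A^\star)$ and $V(A^\star)\cap V(B^\star)=V(A_1)\cap V(B_1)$, and that the attached $K_\ell$-model inside $B_1$ is still a $(V(A^\star)\cap V(B^\star))$-attached model of $K_\ell$ in $B^\star$ (no branch-set vertex and no attachment edge uses an edge of $D$ absent from $G$). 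This closes the induction. I expect the main obstacle to be precisely this~\ref{item:claim_R_cut} bookkeeping: arranging for the recursion to run on a \emph{connected} graph (forcing the passage to $D$ and the verification $R'\subseteq V(D)$, which is where connectedness of $G$ is used), tracking that $B$ and $D$ are not literally subgraphs of $G$ once the missing clique edges are added, and checking that re-gluing preserves both the order of the separation and the attached clique model; everything else follows immediately from \Cref{lemma:find_attached_minors_or_cuts_preserve_model} together with elementary manipulation of separations.
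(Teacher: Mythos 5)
Your proof is correct and follows essentially the same route as the paper's: induction on $k$, an application of \Cref{lemma:find_attached_minors_or_cuts_preserve_model} with $H$ empty and $a=2k$, taking $\ell=k$ when conclusion~\ref{item:claim_attached_model} holds, and in case~\ref{item:claim_R_cut} restricting the model to the far side of the separation and recursing on the connected piece containing $B_z$. The only differences are in the bookkeeping of the recursive step (the paper recurses on the component $E$ of the $B$-side containing $B_z$ and glues back via $A=C\cup A'$, $B=B'\cup(D-E)$, whereas you recurse on the component of $B^{+R'}$ containing $B_z$ and glue by placing everything outside $V(B_1)-V(A_1)$ on the $A$-side), and your treatment of why $R'$ is non-empty and contained in that component is, if anything, more explicit than the paper's.
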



\begin{proof}
    We proceed by induction on $k$. For $k=1$, one can take $A$ to be a 1-vertex graph containing the vertex of $R$, and $B = G$. 
    Note that $B - A$ is non empty, and since $G$ is connected, there is a vertex in $B-A$ adjacent to the vertex in $R$. This vertex constitutes a $V(A) \cap V(B)$-attached model of $K_1$.
    Now, assume that $k\geq 2$ and that the result holds for all positive integers less than $k$. 
    Let $\cgM=(B_x \mid x\in V(K_{2k}))$ be a model of $K_{2k}$ in $G^{+R}$. 
    Apply \Cref{lemma:find_attached_minors_or_cuts_preserve_model}  to $G,R,\cgM$ with $H$ being the empty graph and $a = 2k$. If item~\ref{item:claim_attached_model} is satisfied, then take $A$ to be the graph on $R$ with no edges and $B$ to be the whole graph $G$, and the lemma is satisfied with $\ell=k$. 
    Otherwise, there exists a separation $(C,D)$ in $G$ of order at most $k-1$ 
    and $z\in V(K_a)$
    such that $R \subseteq V(C)$ and $B_z \subseteq V(D)-V(C)$.
    Let $E$ be the component of $D$ containing $B_z$. 
    Since $z$ is adjacent to every other vertex in $K_{2k}$, 
    $B_x$ contains a vertex of $E$ for every $x\in V(K_{2k})$. 
    Let $\cgM_E$ be obtained from $\cgM$ by replacing each branch set in $\cgM$ by its restriction to $E$. 
    Let $R' = V(C) \cap V(E)$. Thus, $|R'|\leq k-1$. 
    Observe that $\cgM_E$ is a model of $K_{2k}$ in $E^{+R'}$.
    By induction applied to $E$ and $R'$, there exists a separation $(A',B')$ of order at most $k-1$ in $E$ such that 
    $R'\subseteq V(A')$ and $B'$ has a $V(A') \cap V(B')$-attached model of $K_{|V(A')\cap V(B')|}$.     
    Finally, put $A = C \cup A'$ and $B = B'\cup (D-E)$. 
\end{proof}

A crucial step in the proof of \Cref{XMinorFreeProduct} relies on the following lemma, which decomposes graphs 
that do not have some attached models. 

\begin{lemma}\label{lemma:find_all_cuts_and_the_simple_part_in_the_middle}
    Let $G$ be a graph, let $h,a,k,d$ be integers with 
    $h,d\geq1$ and $a \geq k\geq0$, 
    and let $R$ be a set of $k$ vertices of $G$.
    If $G$ contains no $R$-attached model of $K_a \oplus U_{h,d}$, then there is an induced subgraph $C$ of $G$ 
    such that $R \subseteq V(C)$ and the following items hold.
    \begin{enumerate}
        \item
        \label{item:lemma:find_all_cuts_ii} 
        Let $m$ be the number of components of $G-C$, let $C^1,\ldots C^m$ be these components, and let $N^i=N_G(V(C^i))$ for every $i\in [m]$. 
        For every $i\in[m]$, $|N^i|\leq k-1$ and
        $G[V(C^i)\cup N^i]$ has an $N^i$-attached model of $K_{|N^i|}$. 
        \item
        \label{item:lemma:find_all_cuts_i}
        Let $C^0$ be the graph obtained from $C-R$ by adding all missing edges between vertices of $N^i$ for every $i\in[m]$ ($C^0$ is a minor of $G-R$ by \ref{item:lemma:find_all_cuts_ii}).
        Then, $C^0$ is $(K_{a+k}\oplus U_{h,d+2k})$-minor-free. 
    \end{enumerate}
\end{lemma}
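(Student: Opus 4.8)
The plan is to choose $C$ by minimising $|V(C)|$ over all admissible cores, where I call an induced subgraph $C$ of $G$ with $R\subseteq V(C)$ \emph{admissible} if it satisfies~\ref{item:lemma:find_all_cuts_ii}, that is, every component $C^i$ of $G-C$ has $|N_G(V(C^i))|\le k-1$ and $G[V(C^i)\cup N_G(V(C^i))]$ has an $N_G(V(C^i))$-attached model of $K_{|N_G(V(C^i))|}$. Since $G$ is (vacuously) admissible, such a minimiser $C$ exists, and it automatically satisfies~\ref{item:lemma:find_all_cuts_ii}. The fact that the associated graph $C^0$ is a minor of $G-R$ then follows by enlarging, for each $i$ and each $w\in N_G(V(C^i))$, the branch set $\{w\}$ by the branch set attached to $w$ in the $K_{|N_G(V(C^i))|}$-model inside $C^i$. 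So the whole argument reduces to establishing~\ref{item:lemma:find_all_cuts_i} for this minimiser $C$, which I would do by contradiction.

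Two facts I would prove first. (a) For all $h,d\ge1$ and $m\ge0$, deleting at most $m$ vertices from $U_{h,d+m}$ leaves a subgraph containing $U_{h,d}$; indeed the deleted set meets at most $m$ of the $d+m$ pairwise non-adjacent trees whose closure defines $U_{h,d+m}$, and each of the $\ge d$ surviving trees contains a complete $d$-ary tree of vertex-height $h$. (b) Writing $\hat C$ for $C$ with all edges added inside each $N_G(V(C^i))$ (so $C^0=\hat C-R$), any $R$-attached model of a graph $K_a\oplus H'$ in $\hat C$ yields one in $G$: reroute each added edge $uv$ with $u,v\in N_G(V(C^i))$ through the $K_{|N_G(V(C^i))|}$-model inside $C^i$, absorbing the appropriate two branch sets of that model; the enlarged branch sets lie in the pairwise disjoint sets $V(C^i)$ and avoid $R$, so the attachments to $R$ survive.

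Now suppose $C^0$ has a model $\mathcal M$ of $K_{a+k}\oplus U_{h,d+2k}$. As $C^0=\hat C-R$ is a subgraph of $\hat C^{+R}$, $\mathcal M$ is a model in $\hat C^{+R}$, and I would apply~\Cref{lemma:find_attached_minors_or_cuts_preserve_model} to $\hat C$, $R$, $\mathcal M$ with $\aaa:=a+k$ (which satisfies $\aaa\ge2k$ because $a\ge k$) and $H:=U_{h,d+2k}$. If outcome~\ref{item:claim_attached_model} occurs, $\hat C$ has an $R$-attached model of $K_a\oplus(U_{h,d+2k}-S)$ with $|S|\le2k$; by (a) this contains an $R$-attached model of $K_a\oplus U_{h,d}$, which by (b) gives an $R$-attached model of $K_a\oplus U_{h,d}$ in $G$, contradicting the hypothesis. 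Hence outcome~\ref{item:claim_R_cut} occurs: a separation $(A,B)$ of $\hat C$ of order $\le k-1$ and a vertex $z$ of $K_{a+k}$ with $R\subseteq V(A)$ and $\mathcal M(z)\subseteq V(B)\setminus V(A)$. Since $z$ is adjacent to every other vertex of $K_{a+k}\oplus U_{h,d+2k}$, each branch set of $\mathcal M$ meets $V(B)$, so at most $|V(A)\cap V(B)|\le k-1$ of them meet $V(A)$; thus at least $a+1$ of the clique branch sets of $\mathcal M$ are contained in $V(B)\setminus V(A)$, and --- using that each $N_G(V(C^i))$ is a clique of $\hat C$ and therefore does not straddle $(A,B)$, together with (b) --- all of them lie in the single component $D^\star$ of $G-V(A)$ containing $\mathcal M(z)$. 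In particular $G[V(D^\star)]$ has a model of $K_{a+1}$ (indeed of $K_{a+1}\oplus U_{h,d}$). Put $P^\star:=V(D^\star)\cap(V(B)\setminus V(A))$; then $\emptyset\neq\mathcal M(z)\subseteq P^\star\subseteq V(C)$, and $C':=G[V(C)\setminus P^\star]$ is an induced subgraph of $G$ with $R\subseteq V(C')$ and $|V(C')|<|V(C)|$.

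To contradict minimality it remains to check that $C'$ is admissible. Every component of $G-C'$ other than $D^\star$ coincides with a component $C^i$ of $G-C$ (this uses the clique dichotomy for the sets $N_G(V(C^i))$ and the fact that $(A,B)$ is a separation of $\hat C$), and so inherits its neighbourhood bound and its attached $K_{|N_G(V(C^i))|}$-model. The remaining and, I expect, hardest point is that $D^\star$, which is a component of $G-C'$ with $N_G(V(D^\star))\subseteq V(A)\cap V(B)$ and hence $\ell:=|N_G(V(D^\star))|\le k-1$, has an $N_G(V(D^\star))$-attached model of $K_\ell$ in $G[V(D^\star)\cup N_G(V(D^\star))]$. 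Since that graph is connected and $G[V(D^\star)]$ carries a large clique model, I would extract this via~\Cref{lemma:find_rooted_cliques} applied inside $D^\star$ (equivalently, a Menger-type linkage argument between $N_G(V(D^\star))$ and the clique model, where a failure of the linkage produces a smaller separation of $G$ to iterate on); making this go through --- keeping the clique model large enough throughout the iteration, which is exactly where the hypothesis $a\ge k$ is essential, and propagating the separations correctly --- is the delicate part of the proof.
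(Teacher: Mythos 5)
Your reduction is sound up to the last step: facts (a) and (b) are correct (and (b) is exactly why the paper can assert parenthetically that $C^0$ is a minor of $G-R$), the application of \Cref{lemma:find_attached_minors_or_cuts_preserve_model} with $\aaa=a+k$, $H=U_{h,d+2k}$ is the same move the paper makes, and outcome~\ref{item:claim_attached_model} is correctly ruled out. The genuine gap is precisely the point you flag as ``delicate'': producing an $N_G(V(D^\star))$-attached model of $K_\ell$ with $\ell=|N_G(V(D^\star))|$ inside $G[V(D^\star)\cup N_G(V(D^\star))]$. Neither of your two proposed tools delivers this. Quantitatively, \Cref{lemma:find_rooted_cliques} applied to that graph with root set $N_G(V(D^\star))$ requires a $K_{2\ell}$ minor, and $\ell$ can be as large as $k-1$, so you need $K_{2k-2}$; the clique model you have retained (the $a+1$ branch sets wholly inside $V(B)\setminus V(A)$) only guarantees $K_{a+1}$, and $a+1\geq 2k-2$ fails already for $a=k\geq 4$. (This is repairable: restricting \emph{all} $a+k$ clique branch sets to $V(B)$ gives a $K_{a+k}$ minor after cliquing the separator, and $a+k\geq 2k>2\ell$ --- this is what the paper does.) Structurally, however, even when \Cref{lemma:find_rooted_cliques} applies, its conclusion is \emph{not} an $N$-attached model of $K_{|N|}$ for your given $N$: it returns a \emph{further} separation $(A'',B'')$ with the attached model attached to the new separator $V(A'')\cap V(B'')$. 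To use it you must move $V(A'')\setminus V(B'')$ back into the core, and $V(A'')$ may contain vertices of old components $C^i\subseteq D^\star$ that were never in $V(C)$, while $B''-A''$ (the part you discard) may contain no vertex of $V(C)$ at all. So the replacement core need not satisfy $|V(C'')|<|V(C)|$, and your minimality argument does not close. The same obstruction defeats the ``iterate on smaller separations'' version of the Menger argument: each iteration changes which component is being peeled off, and there is no monotone quantity guaranteeing termination within your extremal framework.

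The paper avoids this by inverting the order of operations and inducting on $|V(G)|$ rather than on the core. After obtaining the separation $(A,B)$ from \Cref{lemma:find_attached_minors_or_cuts_preserve_model}, it first applies \Cref{lemma:find_rooted_cliques} inside $B$ (to the $K_{a+k}$ minor of $B^{+R'}$) to locate a sub-separation $(E,F)$ whose separator $R''$ \emph{does} carry an $R''$-attached model of $K_{|R''|}$ in $F$; only then does it recurse, on the strictly smaller graph $A\cup E$ with $R''$ turned into a clique. The component peeled off at this level is $F-E$ (or its union with one old component), which by construction has the required attached model, and the induction hypothesis supplies admissibility of everything else. If you want to keep your extremal formulation, you would need to build this two-stage separation refinement into the definition of the object being minimised; as written, the proof is incomplete.
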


\begin{figure}[!htbp]
   \centering 
   \includegraphics[scale=0.95]{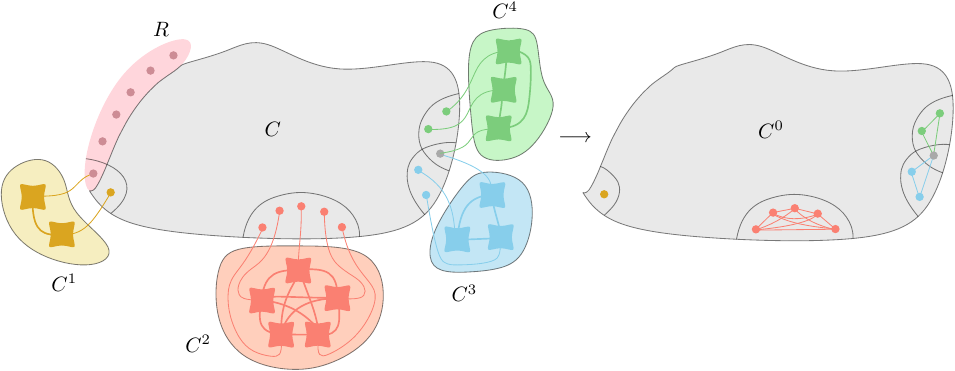}  
   \caption{Illustration of \Cref{lemma:find_all_cuts_and_the_simple_part_in_the_middle} with $a=k=6$. We assume that the initial graph has no $R$-attached model of $K_{6} \oplus U_{h,d}$. We have $m=4$, $|N^1| = 2$, $|N^2| = 5$, $|N^3| = 3$, and $|N^4|=3$. By contracting the connected components $C^i$ in the right way and removing $R$ we obtain the graph $C^0$, which is $(K_{6 + 6} \oplus U_{h,d+2\cdot 6})$-minor-free.}
   \label{fig:components:C} 
 \end{figure}

\begin{proof}
See \Cref{fig:components:C} for an illustration of the assertion. We proceed by induction on $|V(G)|$. Clearly, if $G-R$ is $(K_{a+k}\oplus U_{h,d+2k})$-minor-free, then $C=G$ is the required graph. In particular, this is always the case when $k=0$.

Now, assume that $G-R$ contains a model
$\mathcal{M}=(B_x\mid y\in V(K_{a+k} \oplus U_{h,d+2k}))$ of $K_{a+k}\oplus U_{h,d+2k}$ and $k \geq 1$. 
Apply~\Cref{lemma:find_attached_minors_or_cuts_preserve_model} with $H = U_{h,d+2k}$.  
Observe that every graph obtained from $U_{h,d+2k}$ by removing at most $2k$ vertices contains $U_{h,d}$ as an induced subgraph.
Therefore, since $G$ does not contain an $R$-attached model of $K_a\oplus U_{h,d}$,  \cref{item:claim_attached_model} in \Cref{lemma:find_attached_minors_or_cuts_preserve_model} does not hold. 
It follows that there exists a separation $(A,B)$ of order at most $k-1$ and 
$z\in V(K_{a+k})$
such that $R \subseteq V(A)$ and $B_z\subseteq V(B)-V(A)$.
We can assume that $B-A$ is connected. 
Indeed, if $B-A$ is disconnected, then let $D$ be a component of $B-A$ containing $B_z$ and replace $(A,B)$ with the separation $(G[V(A)\cup V(B)-V(D)],G[V(D)\cup (V(A)\cap V(B))])$. 

If $(A,B)$ is a separation of order $0$, then 
by induction applied to $G-B$, there exists an induced subgraph $C$ of $G-B$ such that $R \subset V(C)$ and items~\ref{item:lemma:find_all_cuts_ii}-\ref{item:lemma:find_all_cuts_i} hold. Components of $G-C$ are components of $(G-B)-C$ and $B$. 
Hence, $C$ also witnesses the assertion of the lemma for $G$.
Therefore, we assume that $(A,B)$ is of order at least $1$.

Let $R'=V(A)\cap V(B)$. Note that $R'$ is non-empty and $|R'|\leq k-1$.
Since $z$ is adjacent to the remaining $a+k-1$ vertices of $K_{a+k}$ and $B_z\subseteq V(B)-V(A)$, 
for every $x \in V(K_{a+k})$ the set $B_x$ contains a vertex of $B$.
Let $\cgM_B$ be obtained from $\cgM$ by replacing each branch set in $\cgM$ by its restriction to $V(B)$. 
Observe that $\cgM_B$ is a model of $K_{a+k}$ in $B^{+R'}$.
By~\Cref{lemma:find_rooted_cliques}
applied to $B$ and $R'$, there is a separation $(E,F)$ in $B$ such that if $R'' = V(E) \cap V(F)$, then $1 \leq |R''| \leq |R'| \leq k-1$,  and 
$F$ contains an $R''$-attached model of $K_{|R''|}$.
Like before, we can assume that $F-E$ is connected.
Let $G'$ be the graph obtained 
from $A \cup E$ by adding all missing edges between vertices in $R''$. 
The model of $K_{|R''|}$ in $F-E$ is disjoint from $E$. 
Thus, $G'$ has fewer vertices than $G$. 
Hence, by induction, $G'$ contains an induced subgraph $C'$ such that 
$R \subset V(C')$ and items~\ref{item:lemma:find_all_cuts_ii}-\ref{item:lemma:find_all_cuts_i} hold.
Let $C^1,\dots,C^m$ be the connected components of $G'-C'$, and let $N^i = N_{G'}(V(C^i))$ for every $i \in [m]$.
We claim that $C = G[V(C')]$ satisfies items~\ref{item:lemma:find_all_cuts_ii}-\ref{item:lemma:find_all_cuts_i}.  
Note that $C$ and $C'$ have the same set of vertices. 
Since $G'[R'']$ is a complete graph, either $R'' \subset V(C)$, or $R'' \subset V(C^i) \cup N^i$ for some $i \in [m]$. 
In the first case, $C^1,\dots,C^m$, and $C^{m+1}=F-E$ are the components of $G-C$. Observe that $N^{m+1}=R''$ and items~\ref{item:lemma:find_all_cuts_ii}-\ref{item:lemma:find_all_cuts_i} hold.
In the second case, $R'' \subset V(C^i) \cup N^i$ for some $i \in [m]$ and $R''$ is not a subset of $V(C)$. In this case, $C^i \cup (F-E)$ is a connected component of $G-C$, and so, both items of the assertion follow immediately.
\end{proof}

\section{Proof of Theorem~\ref{XMinorFreeProduct}}
\label{sec:main}

This section proves 
\Cref{XMinorFreeProduct}. As argued in \Cref{sec:prelim}, it suffices to do so for $X=U_{h,d}$. 

Let $\tau : \mathbb{Z}^2_{\geq 0} \rightarrow \mathbb{Z}$ be the function defined by
\begin{align*}
    \tau(0,k) &= k-2, \textrm{ and}\\ 
    \tau(h,k) &= \tau(h-1,2k+1)+k+1\ \textrm{for every $h\geq 1$,}
\end{align*}
for every $k \geq 0$.
One can check that 
$\tau(h,0) = 2^{h+1}-4$ for every $h \geq 0$.

Moreover, let $c: \mathbb{Z}_{\geq 0}^3 \rightarrow \mathbb{Z}$ be the function defined by 
\begin{align*}
    c(0,d,k)&=1,\ \text{ and}\\
    c(h,d,k)&= \max\{d-1,\ 2,\ k,\ c(h-1,d+2k,2k+1), \,
     2(d-1)2^k-1\} \text{ for every $h\geq1$,}
\end{align*}
for every $d,k \geq 0$.

A key to our proof of \Cref{XMinorFreeProduct} is to prove the following stronger result for  $K_{k} \oplus U_{h,d}$-minor-free graphs. 

\begin{lemma}\label{UhdMinorFreePartition}
For all integers $h,d,t\geq1$ and $k\geq0$, for every $K_{k} \oplus U_{h,d}$-minor-free graph $G$ with  $\tw(G) < t$, there exists a graph $H$ with treewidth at most $\tau(h,k)$, and an $H$-partition $(V_x \mid x \in V(H) )$ of $G$ such that $|V_x|\leq c(h,d,k) \cdot t$ for all $x\in V(H)$.
\end{lemma}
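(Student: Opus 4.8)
The plan is to prove Lemma~\ref{UhdMinorFreePartition} by induction on $h$, using Lemma~\ref{lemma:find_all_cuts_and_the_simple_part_in_the_middle} as the engine that peels off one "level" of the treedepth at each step. The base case $h=0$ is trivial: $K_k \oplus U_{0,d} = K_k$, so a $K_k$-minor-free graph $G$ has $\tw(G) \leq k-2 = \tau(0,k)$, and we take $H = G$ with the trivial partition into singletons, which has width $1 \leq c(0,d,k)\cdot t$.

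\medskip

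For the inductive step, suppose $h \geq 1$ and $G$ is $K_k \oplus U_{h,d}$-minor-free with $\tw(G) < t$. The key observation is that $G$ cannot contain an $R$-attached model of $K_a \oplus U_{h,d}$ for a suitable $a \geq k$ and any set $R$ of $k$ vertices — otherwise, combining the attached branch sets over $R$ with $R$ itself would yield a $K_{a} \oplus U_{h,d} \supseteq K_k \oplus U_{h,d}$ minor (here I need $a - $ something $\geq k$; taking $a = 2k+1$ or similar should do, and this is what drives the recursion $k \mapsto 2k+1$ in $\tau$ and $c$). Then, for each bag $W$ of a width-$(t-1)$ tree-decomposition of $G$, I apply Lemma~\ref{lemma:find_all_cuts_and_the_simple_part_in_the_middle} with $R = W$ (or rather, I apply it once globally with an appropriately chosen $R$, after first handling the tree-decomposition structure). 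This produces an induced subgraph $C$ with $R \subseteq V(C)$, whose "outside" components $C^i$ each have a small boundary $N^i$ (size $\leq k-1$) admitting an $N^i$-attached $K_{|N^i|}$-model, and whose "core" $C^0 = (C - R)$ with the $N^i$'s completed to cliques is $(K_{a+k} \oplus U_{h,d+2k})$-minor-free, hence in particular $K_{k'} \oplus U_{h-1, d'}$-minor-free for appropriate $k' = 2k+1$, $d' = d+2k$ (using that $U_{h,d'}$ contains $U_{h-1,d'}$ and dropping one level). Recursing into $C^0$ via the induction hypothesis on $h-1$ gives a graph $H_0$ of treewidth $\leq \tau(h-1, 2k+1)$ and a suitable partition; I then glue back: each component $C^i$ contributes at most $|N^i| \leq k$ new bags' worth of structure, and the $+k+1$ in $\tau(h,k) = \tau(h-1,2k+1)+k+1$ accounts for the $R$ (size $k$) plus one extra for the tree-decomposition gluing node.

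\medskip

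The bookkeeping that assembles the final $H$-partition is the delicate part. I would build $H$ as follows: take $H_0$ from the recursion on the core, and for each outside component $C^i$, its boundary $N^i$ of size $\ell_i \leq k-1$ sits inside a bounded number of bags of $H_0$ (using Observation~\ref{ObsPartitionProduct} and that $N^i$ induces a clique in $C^0$, a clique lies in one bag of any tree-decomposition), and $C^i$ recursively has its own partition; since $C^i \cup N^i$ has an $N^i$-attached $K_{\ell_i}$-model and bounded... actually $\tw(C^i)$ is still $<t$ but $C^i$ need not be $U_{h-1}$-free, so I instead absorb each $C^i$ back in using that it hangs off a clique of size $\leq k-1$ — this is where the clique-sum bound $\tw \leq \max$ and a direct partition argument (each $C^i$ gets a partition into $\leq$ ... parts, bounded by $t$ via the treewidth) enter. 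The width bound $|V_x| \leq c(h,d,k)\cdot t$ must be tracked through: the $\max$ in the definition of $c$ collects the recursion term $c(h-1,d+2k,2k+1)$, the boundary-merging cost ($\sim k$ or $d-1$ terms), and the term $2(d-1)2^k - 1$ which presumably bounds the number of parts needed to partition one component $C^i$ or to handle the $U_{h,d}$-structure directly when we hit a dead end.

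\medskip

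\textbf{The main obstacle} I anticipate is the gluing step: showing that the partitions of the core $C^0$ and of the peripheral pieces $C^i$ can be merged into a single $H$-partition of $G$ whose host graph $H$ still has treewidth $\tau(h,k)$, without the treewidth blowing up. The subtlety is that when we un-contract $C^0$ back to $C - R$ and re-attach the components $C^i$, the edges of $C^i$ to its boundary $N^i$ must be respected by the host graph; since $N^i$ was completed to a clique in $C^0$ it lives in one bag of (a tree-decomposition of) $H_0$, so we can hang the host structure for $C^i$ there, but we must verify the interval/subtree conditions of the tree-decomposition and that no part exceeds $c(h,d,k)\cdot t$. The reason the extra additive cost is only $k+1$ (rather than something depending on $d$ or the number of components) is that $R$ is a single clique added uniformly and the peripheral attachments reuse bags rather than creating new treewidth — making this precise, likely via the natural-tree-decomposition lemma (Lemma~\ref{lemma:natural_tree_decomposition}) and Lemma~\ref{lemma:increase_X_to_have_small_interfaces} to control how many bags $R$ spans, is the technical heart of the argument.
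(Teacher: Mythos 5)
There are two genuine gaps. First, your base case is false as stated: a $K_k$-minor-free graph does \emph{not} satisfy $\tw(G)\leq k-2$ (planar graphs are $K_5$-minor-free with unbounded treewidth). What you need there is the nontrivial result of Illingworth, Scott and Wood (\Cref{theorem:illingworth_scott_wood_K_k_minor_free}): a $K_k$-minor-free graph with $\tw(G)<t$ admits an $H$-partition of width at most $t$ with $\tw(H)\leq k-2$. Taking $H=G$ with singleton parts gives the wrong host graph entirely.

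Second, and more seriously, your treatment of the peripheral components $C^i$ does not close the induction. Each $C^i$ is only known to be $K_k\oplus U_{h,d}$-minor-free with $\tw<t$ --- i.e.\ it belongs to exactly the class you are trying to decompose --- so there is no ``direct partition argument'' that absorbs it; you must recurse on it at the \emph{same} level $h$. This forces a double induction on $(h,|V(G)|)$ (the core $C^0$ drops to $h-1$ with parameters $(d+2k,2k+1)$; each $C^i$ stays at level $h$ but loses at least one vertex since $|N^i|\leq k-1$ means some $R_j$ is missing from it), and it forces a \emph{strengthened} induction hypothesis in which prescribed interface sets $R_1,\dots,R_\ell$ must appear as designated parts forming a clique in $H$ --- otherwise the clique-sum gluing of the host graph for $C^i$ onto the host graph for the core cannot be performed. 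This strengthening (the paper's \Cref{lemma:main_induction}) is the technical heart you identify as ``the delicate part'' but do not supply. Relatedly, before the decomposition lemma can be applied at all, one must first delete a hitting set $X$ for all $R$-attached models of $K_{k+1}\oplus U_{h-1,d}$ (obtained from \Cref{lemma:helly_property_tree_decomposition} plus \Cref{lemma:increase_X_to_have_small_interfaces}, using a pigeonhole over the $2^k$ choices of attachment vertices to rule out $(d-1)2^k+1$ disjoint such models); the term $2(d-1)2^k-1$ in $c$ bounds the size of this single part $X$ (which becomes the extra dominant vertex $z$ of $H$), not the number of parts of a component $C^i$.
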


This result with $k=0$ and 
\Cref{ObsPartitionProduct} implies \Cref{XMinorFreeProduct}. The proof of \Cref{UhdMinorFreePartition} is by induction on $h$. Considering  
$K_{k} \oplus U_{h,d}$-minor-free graphs enables the proof to trade-off a decrease in $h$ with an increase in $k$. 

We will need the following result by Illingworth, Scott, and Wood~\cite{illingworth2022product} for the base case of our induction.

\begin{theorem}[Theorem 4 in~\cite{illingworth2022product}]
\label{theorem:illingworth_scott_wood_K_k_minor_free}
    For all integers $k\geq2$ and $t\geq 1$, for every $K_k$-minor-free graph $G$ with $\tw(G) < t$, there is a graph $H$ of treewidth at most $k-2$ and an $H$-partition of $G$ of width at most $t$.
\end{theorem}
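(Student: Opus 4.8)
This is Theorem~4 of Illingworth, Scott, and Wood~\cite{illingworth2022product}; I outline the approach I would take. By \Cref{ObsPartitionProduct}, it suffices to produce, for every $K_k$-minor-free graph $G$ with $\tw(G)<t$, a partition $\mathcal{P}$ of $V(G)$ into parts of size at most $t$ with $\tw(G/\mathcal{P})\le k-2$ (then take $H=G/\mathcal{P}$). Distinct components of $G$ can be handled separately, since the disjoint union of the resulting quotients has treewidth equal to the maximum of the individual treewidths and the part sizes are unchanged; so assume $G$ is connected. If $\tw(G)\le k-2$, take $\mathcal{P}$ to be the partition into singletons and $H=G$, so assume $\tw(G)\ge k-1$, and fix a tree-decomposition $\mathcal{W}=(T,(W_x\mid x\in V(T)))$ of $G$ of width at most $t-1$, rooted arbitrarily.

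The plan is to build $\mathcal{P}$ together with a width-$(k-2)$ tree-decomposition of $G/\mathcal{P}$ by a chordal-partition-style peeling, in the spirit of van den Heuvel, Ossona de Mendez, Quiroz, Rabinovich and Siebertz~\cite{vdHetal17}. One processes pairs $(C,S)$, where $C$ is a component of the not-yet-covered part of $G$ and $S$ is the set of already-covered vertices with a neighbour in $C$; at each step one peels off a connected piece $B\subseteq V(C)$ adjacent to all the relevant boundary, makes $B$ a new part of $\mathcal{P}$, and recurses on each component of $C-B$ with boundary contained in $S\cup N_C(B)$. The $K_k$-minor-freeness of $G$ is used to maintain the invariant that each boundary $S$ meets at most $k-2$ of the parts already created and that those parts are pairwise adjacent in $G/\mathcal{P}$; then at every step the union of $B$ with the parts met by $S$ is a clique of size at most $k-1$ in $G/\mathcal{P}$, and arranging these cliques along the recursion tree yields a tree-decomposition of $G/\mathcal{P}$ of width at most $k-2$. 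Finally \Cref{ObsPartitionProduct} converts $\mathcal{P}$ into the desired $H$-partition.

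The technical heart — and the source of the \emph{linear} treewidth binding function rather than a polynomial one — is to carry out this peeling so that every piece $B$ has at most $t$ vertices, and this is where $\tw(G)<t$ is essential. For an arbitrary $K_k$-minor-free graph the natural piece $B$ is a union of a few shortest paths reaching the boundary, which can be arbitrarily long; instead one guides the peeling by $\mathcal{W}$, restricting $\mathcal{W}$ to the current component $C$ and extracting $B$ from (essentially) a single bag $W_x$, so that $|B|\le|W_x|\le t$, with $x$ chosen so that $W_x\cap V(C)$ separates $C$ and shields each component of $C-B$ from at least one of the parts met by $S$ (which is what keeps the invariant alive with the bound $k-2$, not $k-1$). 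I expect the main obstacle to be exactly this reconciliation: ensuring that a set of at most $t$ vertices drawn from one bag can simultaneously be made connected, serve as the needed separator, and remain adjacent to all of the at most $k-2$ boundary parts, and checking that the required shielding is always available in a $K_k$-minor-free graph; bounded treewidth is what makes this feasible, since bags are small separators, whereas in general graphs the chordal partition forces long pieces.
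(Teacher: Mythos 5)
First, a point of reference: this paper never proves \Cref{theorem:illingworth_scott_wood_K_k_minor_free}; it is imported from Illingworth, Scott and Wood~\cite{illingworth2022product} and used as a black box in the base case of \Cref{lemma:main_induction}, so the comparison is with the cited proof rather than with anything proved here. What you have written is a plan rather than a proof, and the genuine gap sits exactly where you flag it. The outer reduction via \Cref{ObsPartitionProduct} and the idea of peeling parts along a width-$(t-1)$ tree-decomposition while maintaining a clique invariant on at most $k-2$ boundary parts is the right general shape, but the step you defer --- extracting each new piece $B$ from (essentially) a single bag so that $B$ is simultaneously connected, a separator of the current component, and adjacent to \emph{all} of the boundary parts --- is not a routine verification; as stated it is in general impossible. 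A bag is a small separator, but its intersection with the current component is typically disconnected and need not meet the neighbourhood of even one prescribed boundary part; forcing connectivity and attachment to up to $k-2$ boundary parts requires adding connecting paths whose length is not controlled by $t$, which is precisely why the chordal-partition construction of van den Heuvel et al.~\cite{vdHetal17} (\Cref{lemma:Heuvel_et_al_wcol_Kt_minor_free}) uses geodesics and comes with no bound on part size. So the three requirements you impose on $B$ are mutually incompatible, and your proposal gives no mechanism for trading one of them away.

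The cited proof resolves this tension by giving up a different requirement than you do: parts are kept inside single bags (whence the width bound $t$), but they are \emph{not} required to induce connected subgraphs, so one cannot argue that the quotient is a minor of $G$ and inherit $K_k$-minor-freeness directly. Instead the bound of $k-2$ on the number of boundary parts is maintained through a strengthened induction hypothesis in which prescribed boundary parts are forced to be parts forming a clique in the quotient (very much in the spirit of the bookkeeping in \Cref{lemma:main_induction} of this paper), and the $K_k$-minor-freeness of $G$ is invoked by building branch sets that augment the earlier parts with connecting material from the not-yet-processed region, rather than by contracting the parts themselves. To turn your outline into a proof you would need to drop the connectivity-and-adjacency demands on $B$ and supply such an invariant together with the accompanying minor construction; as written, the argument stops at its decisive step.
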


The next lemma with $\ell=0$ immediately implies \Cref{UhdMinorFreePartition}, which in turn implies \Cref{thm:selling-point} and \Cref{XMinorFreeProduct}. 
    
\begin{lemma}\label{lemma:main_induction}
    For all integers $h$, $d$, $k$, $\ell$, $t$ with
    $h,d,t\geq1$, $k\geq0$, and $0 \leq \ell \leq k$, 
    for every graph $G$ such that $K_{k} \oplus U_{h,d}$ is not a minor of $G$, $\tw(G) < t$, and for all pairwise disjoint non-empty subsets $R_1,\dots,R_{\ell}$ of vertices of $G$ such that $|R_j| \leq 2$ for every $j \in [\ell]$, there exists a graph $H$, an $H$-partition $(V_x \mid x \in V(H) )$ of $G$, and $x_1,\dots,x_{\ell}  \in V(H)$ such that:
    \begin{enumerateNum}
        \item $\tw(H) \leq \tau(h,k)$, \label{item:pfirst}\label{item:twtau}
        \item $|V_x|\leq c(h,d,k) \cdot t$ for all $x\in V(H)$,\label{item:pwx}
        \item $R_j = V_{x_j}$ for all $j \in [\ell]$,\label{item:RV}
        \item $\{x_1,\dots,x_{\ell}\}$ is a clique in $H$.\label{item:Hcomplete}
        \label{item:plast}
    \end{enumerateNum}
\end{lemma}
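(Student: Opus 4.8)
The plan is to prove \Cref{lemma:main_induction} by induction on $h$. For the base case $h=0$, the graph $U_{0,d}$ is empty, so $K_k \oplus U_{0,d} = K_k$, and $G$ is $K_k$-minor-free. If $k \le 1$ then $G$ is edgeless (or empty) and the statement is trivial by taking $H$ to be $G$ itself with singleton parts after suitably merging the $R_j$; the interesting base case is $k \ge 2$, where I would invoke \Cref{theorem:illingworth_scott_wood_K_k_minor_free} to obtain an $H$-partition of width at most $t \le c(0,d,k)\cdot t$ with $\tw(H) \le k-2 = \tau(0,k)$. To enforce \ref{item:RV} and \ref{item:Hcomplete}, I need to massage this partition so that each $R_j$ is exactly one part and the corresponding vertices of $H$ form a clique. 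Since $|R_j| \le 2$ and the $R_j$ are disjoint, this can be done by a bounded local surgery on $H$: split/merge the at most $2\ell \le 2k$ vertices of $H$ whose parts meet $\bigcup R_j$, which changes treewidth and part-width by a bounded amount — here one has to be slightly careful that the arithmetic still fits under $\tau(0,k)$ and $c(0,d,k)\cdot t$, and this is precisely the kind of bookkeeping the functions $\tau, c$ are designed to absorb.

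For the inductive step, assume $h \ge 1$ and the result for $h-1$. Let $G$ be $K_k \oplus U_{h,d}$-minor-free with $\tw(G) < t$, together with $R_1, \dots, R_\ell$. Set $R = \bigcup_{j \in [\ell]} R_j$, so $|R| \le 2\ell \le 2k$; by padding with arbitrary vertices I may assume $|R| = k$ (or handle $|R| < k$ directly). The key tool is \Cref{lemma:find_all_cuts_and_the_simple_part_in_the_middle}: since $G$ has no $R$-attached model of $K_a \oplus U_{h,d}$ for the appropriate $a$ (which follows from $K_k \oplus U_{h,d}$-minor-freeness when $a$ is chosen $\ge k$ — an $R$-attached model of $K_k \oplus U_{h,d}$ would in particular give a model of $K_k \oplus U_{h,d}$), I obtain an induced subgraph $C \supseteq R$ of $G$, with components $C^1, \dots, C^m$ of $G - C$, boundary sets $N^i = N_G(V(C^i))$ of size $\le k-1$, each $G[V(C^i) \cup N^i]$ carrying an $N^i$-attached model of $K_{|N^i|}$, and the "core" $C^0$ (obtained from $C - R$ by completing each $N^i$) being $(K_{a+k} \oplus U_{h,d+2k})$-minor-free — crucially, $C^0$ is $(K_{?} \oplus U_{h-1, d+2k})$-minor-free after one more step, because $U_{h,d+2k}$ contains many disjoint copies of $U_{h-1,d+2k}$ hanging off a common vertex, so excluding $K_{k+1}\oplus U_{h-1,d+2k}$-type configurations in $C^0$ lets the recursion drop $h$ to $h-1$ at the cost of replacing $k$ by $2k+1$ (matching $\tau(h,k) = \tau(h-1, 2k+1) + k + 1$). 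I would apply the induction hypothesis to $C^0$ with parameter $h-1$, $k' = 2k+1$, $d' = d+2k$, and the sets $R_j$ viewed inside $C^0$ — here I must check the $R_j$ survive as disjoint sets of size $\le 2$ in the minor $C^0$, which holds since $R \cap V(C^0) = \emptyset$; instead the relevant "rooted" sets handed to the recursion are the completed boundaries $N^i$, of size $\le k-1 \le 2k+1 = k'$. This yields a graph $H^0$ with $\tw(H^0) \le \tau(h-1, 2k+1)$ and an $H^0$-partition of $C^0$ of width $\le c(h-1, d+2k, 2k+1)\cdot t$ in which each $N^i$ is a clique contained in the parts of $\le k-1$ vertices of $H^0$.

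It remains to reassemble: take $H$ to be $H^0$ together with, for each $i \in [m]$, a new apparatus of at most $k-1 + (\text{something})$ vertices handling $G[V(C^i) \cup N^i]$, glued to the $\le k-1$ vertices of $H^0$ covering $N^i$; and a cluster of $\le k$ new vertices carrying $R$ and the $R_j$. Because each $C^i$ has treewidth $< t$ and $|N^i| \le k-1$, and because $G[V(C^i)\cup N^i]$ only attaches to the rest of $G$ through $N^i$, I can partition $V(C^i)$ into $\le c(h,d,k)$-sized parts over a graph of treewidth $\le \tau(h,k)$ (e.g. using a tree-decomposition of $C^i$ of width $<t$, turning each bag into a part, giving width $\le t$ over a tree, which has treewidth $1$ — the $N^i$-clique and the bound $2(d-1)2^k - 1$ appearing in the definition of $c$ suggest a more delicate argument is needed here, probably to handle the $U_{h,d}$ pieces that could live inside $C^i$, but $C^i$ being a "simple part" with a $K_{|N^i|}$-model rooted on $N^i$ should constrain it). The pieces are glued along the $\le k-1$ vertices of $H^0$ covering $N^i$; adding the $\le k$ vertices for $R$ to every new cluster and to the interface, and making $\{x_1, \dots, x_\ell\}$ a clique inside the $R$-cluster, gives \ref{item:RV}–\ref{item:Hcomplete}. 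The treewidth of $H$ is at most $\max\{\tau(h-1,2k+1) + (k+1), \dots\} = \tau(h,k)$ since we glue along $\le k-1$ vertices and add $\le k+1$ universal-to-the-gluing vertices per piece; the clique-sum bound $\tw(G) \le \max\{\tw(G_1), \tw(G_2)\}$ controls how these combine.

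\textbf{The main obstacle} I expect is the reassembly step — specifically, getting the partition of each "simple part" $G[V(C^i)\cup N^i]$ right so that (a) the vertices of $N^i$ end up in a bounded number of parts that can be identified with the $\le k-1$ interface vertices of $H^0$, (b) the part-width stays within $c(h,d,k)\cdot t$, which is why the definition of $c$ carries the term $2(d-1)2^k - 1$, and (c) the whole glued graph $H$ genuinely has treewidth $\le \tau(h,k)$ rather than something larger, which requires a clean tree-decomposition argument combining the tree-decomposition of $H^0$ with the local decompositions of the pieces along shared cliques. Getting the constants to close exactly is the delicate part; the structural skeleton is dictated by \Cref{lemma:find_all_cuts_and_the_simple_part_in_the_middle} and the induction.
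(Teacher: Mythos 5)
There is a genuine gap, and it is the central mechanism of the proof. You apply \Cref{lemma:find_all_cuts_and_the_simple_part_in_the_middle} to $G$ with the graph $U_{h,d}$, which yields a core $C^0$ that is $(K_{a+k}\oplus U_{h,d+2k})$-minor-free --- note the \emph{same} height $h$. Your claim that this ``lets the recursion drop $h$ to $h-1$'' because $U_{h,d+2k}$ contains copies of $U_{h-1,d+2k}$ is backwards: excluding $K_{a+k}\oplus U_{h,d+2k}$ is \emph{weaker} than excluding $K_{2k+1}\oplus U_{h-1,d+2k}$ (the latter graph is a minor of the former), so the induction hypothesis at height $h-1$ cannot be invoked on $C^0$, and the recursion never terminates. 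The paper's proof closes this gap with a step you do not have: define $\mathcal{F}$ to be the family of connected subgraphs of $G-\bigcup_j R_j$ containing an $\{R_1,\dots,R_k\}$-attached model of $K_{k+1}\oplus U_{h-1,d}$ (height $h-1$), show that $(d-1)2^k+1$ pairwise disjoint members of $\mathcal{F}$ would combine with the $R_j$ into a forbidden $K_k\oplus U_{h,d}$-model, and then use the Helly property of a (natural) tree-decomposition (\Cref{lemma:helly_property_tree_decomposition} and \Cref{lemma:increase_X_to_have_small_interfaces}) to extract a set $X$ of at most $(2(d-1)2^k-1)\cdot t$ vertices hitting all of $\mathcal{F}$. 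Only after deleting $X$ (and contracting each $R_j$) does the graph have \emph{no} $R$-attached model of $K_{k+1}\oplus U_{h-1,d}$, so that the decomposition lemma can be applied with $U_{h-1,d}$ and produce a core excluding $K_{2k+1}\oplus U_{h-1,d+2k}$. This hitting-set step is also where the term $2(d-1)2^k-1$ in the definition of $c(h,d,k)$ comes from ($X$ becomes the part $V_z$), which you noticed but could not place.

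A second, related gap is the treatment of the components $C^i$. These are not ``simple parts'': they are arbitrary $K_k\oplus U_{h,d}$-minor-free graphs attaching to the rest through $N^i$ and (at most two contracted components of) $X$, and they must be handled by a recursive call at the \emph{same} height $h$ with a new interface $\mathcal{R}^i$ of at most $k$ sets. This is why the paper's induction is on the pair $(h,|V(G)|)$ in lexicographic order (each $G^i$ misses some $R_j$, hence has fewer vertices), whereas your induction on $h$ alone provides no hypothesis to invoke on $C^i$. Your fallback of turning the bags of a width-$(t-1)$ tree-decomposition of $C^i$ into parts over a tree cannot work: it would imply every graph of bounded treewidth has an $H$-partition of bounded width with $\tw(H)=1$, contradicting \Cref{lem-uhd}. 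The remaining skeleton of your proposal (the recursion $\tau(h,k)=\tau(h-1,2k+1)+k+1$, gluing $\{x_1,\dots,x_k,z\}\oplus H^0$ to the pieces by clique-sums, invoking \Cref{theorem:illingworth_scott_wood_K_k_minor_free} in the base case) does match the paper, but without the hitting-set step and the double induction the argument does not close.
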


\begin{proof}
    We call a tuple 
    $(h,d,k,t,G,\set{R_1,\ldots,R_\ell})$ satisfying the premise of the lemma an \emph{instance}. We proceed by induction on $(h,|V(G)|)$ in lexicographic order.

    If $h=1$ and $k=0$, then $K_k \oplus U_{h,d}$ is the graph with $d$ vertices and no edges.
    Thus, $|V(G)| \leq d-1$ and $\{V(G)\}$ is a $K_1$-partition of $G$ of width at most $d-1$.
    Then, items \ref{item:RV} and \ref{item:Hcomplete} hold vacuously, and items \ref{item:twtau} and \ref{item:pwx} are clear since $\tw(K_1) = 0 = \tau(1,0)$ and $d-1 \leq c(1,d,0)$. From now on, assume that $(h,k) \neq (1,0)$.

    If $|V(G)-\bigcup_{j \in [\ell]} R_j| < k$, then
    the $K_{\ell+1}$-partition
    $\{R_1, \dots, R_\ell, V(G)-\bigcup_{j\in[\ell]} R_j\}$ of $G$ satisfies \ref{item:twtau}, \ref{item:RV}, \ref{item:Hcomplete}. 
    Since $2 \leq c(h,d,k) \cdot t$ and $k\leq c(h,d,k) \cdot t$, item~\ref{item:pwx} also holds 
    and we are done.
    Now assume that $G-\bigcup_{j \in [\ell]} R_j$ has at least $k$ vertices. This enables us to enforce $\ell = k$, indeed, if $\ell<k$, then pick distinct vertices $s_{\ell+1}, \dots, s_k \in V(G)- \bigcup_{j\in[\ell]}R_j$ and set $R_j = \{s_j\}$ for every $j \in \{\ell+1, \dots, k\}$.
    From now on, we assume that $\ell=k$. 

    If $G-\bigcup_{j\in[k]} R_j$ is not connected, then for every component $C$ of $G-\bigcup_{j\in[k]} R_j$, 
    Apply induction to the instance 
    $(h,d,k,t,G[V(C) \cup \bigcup_{j\in[k]} R_j],\set{R_1,\ldots,R_{k}})$ 
    to obtain a graph $H^C$ with distinguished vertices $x^C_1, \dots, x^C_k$, and an $H^C$-partition $(V^C_x \mid x \in V(H^C))$ of $G[V(C) \cup \bigcup_{j \in [k]}R_j]$ satisfying \ref{item:pfirst}-\ref{item:plast}. 
    In particular, $V^C_{x^C_j} = R_j$ for every $j \in [k]$.
    Let $C_1, \dots, C_m$ be the components of $G-\bigcup_{j \in [k]} R_j$.
    Let $H$ be the graph obtained from the disjoint union of $H^{C_1}, \dots, H^{C_m}$ by identifying the vertices in $\{x^{C_j}_i\}_{j\in[m]}$ into a single vertex $x_i$, for each $i \in [k]$.
    Finally, set $V_{x_j} = R_j$ for every $j \in [k]$, and $V_x = V^{C_i}_x$ for every $x \in V(H^{C_i}) \setminus \{x^C_1, \dots x^C_k\}$ and for every $i \in [m]$.
    Item~\ref{item:twtau} holds since  
    $\tw(H) = \max_{i\in[m]}\{\tw(H^{C_i})\} \leq \tau(h,k)$.
    Item~\ref{item:pwx} holds by induction.
    Items~\ref{item:RV} and \ref{item:Hcomplete} hold by construction of $H$.
    From now on, we assume that $G-\bigcup_{j\in[k]} R_j$ is connected.

    Let $\mathcal{F}$ be the family of all connected subgraphs of $G - \bigcup_{j\in[k]} R_j$ containing an $\{R_1, \dots, R_k\}$-attached model of $K_{k+1}\oplus U_{h-1,d}$.
    If $\mathcal{F}$ contains $(d-1)2^k+1$ pairwise disjoint subgraphs, then 
    by the pigeonhole principle
    there exist $s_1,\ldots,s_k$ with $s_j\in R_j$ for each $j \in [k]$, and $d$ vertex-disjoint $\{s_1, \dots, s_k\}$-attached models 
    $\cgM^i = (M^i_x \mid x \in V(K_{k+1}\oplus U_{h-1,d}))$ of $K_{k+1}\oplus U_{h-1,d}$ for $i \in [d]$.
    We denote by $v_1,\dots, v_{k+1}$ the vertices of $K_{k+1}$ in $K_{k+1} \oplus U_{h-1,d}$. 
    Since these vertices have the same closed neighborhood in $K_{k+1}\oplus U_{h-1,d}$, we can assume that $M^{i}_{v_j}$ contains a neighbor of $s_j$, for all $i\in[d]$ and $j\in[k]$. 
    For each $j \in [k]$, let $M_j = \set{s_j} \cup\bigcup_{i\in[d]}M^i_{v_j}$.
    Note that for every $i \in [d]$, $\cgN^i = (M_x^i \mid x \in V(K_{k+1} \oplus U_{h-1,d}) - \{v_1,\dots,v_k\})$ is a model of $K_1 \oplus U_{h-1,d}$ in $G$.
    Moreover, for every $j \in [k]$, $i \in [d]$, and $M \in \cgN^i$, $M_j$ is adjacent to $M$.
    Therefore, $\cgN^1,\dots,\cgN^d$ together with $M_1,\ldots,M_k$ constitute a model of $K_{k}\oplus U_{h,d}$ in $G$, a contradiction (see \Cref{fig:combining:minors}).
    
    Hence, there are no $(d-1)2^k+1$ pairwise disjoint members in $\mathcal{F}$.
    Since $\tw(G-\bigcup_{j\in(k]} R_j) \leq \tw(G)<t$ and $G-\bigcup_{j \in [k]} R_j$ is connected, by \Cref{lemma:natural_tree_decomposition}, $G$ admits a natural tree-decomposition $\mathcal{W}$ of width at most $t-1$. By Lemma~\ref{lemma:helly_property_tree_decomposition}, there exists a set $Y$ of vertices of $G - \bigcup_{j\in[k]} R_j$ that is the union of at most $(d-1)2^k$ bags of $\mathcal{W}$, such that $Y$ intersects all the members of $\mathcal{F}$.
    By \Cref{lemma:increase_X_to_have_small_interfaces} applied to $G-\bigcup_{j \in[k]}R_j,\ \mathcal{W},\ Y$, there exists a set $X$ of at most $(2 (d-1)2^k - 1) \cdot t \leq c(h,d,k) \cdot t$ vertices in $G$ such that $Y\subseteq X$ and each component $D$ of $G-\bigcup_{j \in [k]} R_j - X$ has neighbors in at most two components of $G-\bigcup_{j=1}^kR_j - D$.

\begin{figure}[!htbp] 
   \centering 
   \includegraphics[scale=0.95]{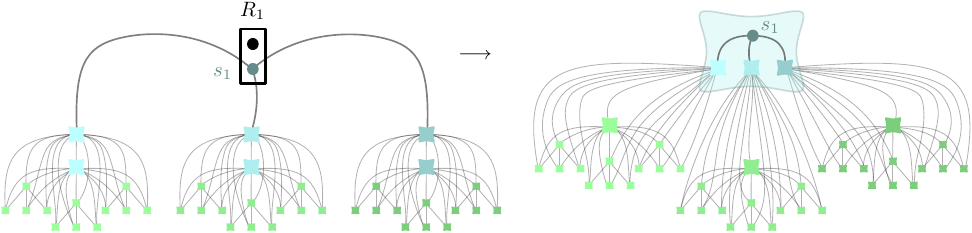} 
   \caption{Given three $\{s_1\}$-attached models of $K_2 \oplus U_{2,3}$, contract $s_1$ with the neighboring branch sets to obtain $K_1 \oplus U_{3,3}$.
   }
   \label{fig:combining:minors}
\end{figure}

    Consider the graph $G'$ obtained from $G-X$ by identifying the vertices in $R_j$ into a single vertex $r_j$, for each $j\in[k]$.
    Let $R = \{r_1, \dots, r_k\}$.
    Note that $G'$ is not necessarily a minor of $G$, however, $G'-R$ is a subgraph of $G$.  
    Observe that $G'$ has no $R$-attached model of $K_{k+1}\oplus U_{h-1,d}$ since $X$ is disjoint from $V(G')$ and every such model in $G$ intersects $X$.
    By \Cref{lemma:find_all_cuts_and_the_simple_part_in_the_middle} applied with $a=k+1$, we obtain an induced subgraph $C$ of $G'$ with the following properties.
    Let $C^1,\ldots, C^m$ be the connected components of $G'-C$, let $N^i=N_{G'}(V(C^i))$ for every $i\in [m]$, and let $C^0$ be the graph obtained from $C-R$ by adding all missing edges between vertices of $N^i-R$ for each $i\in[m]$. 
    Observe that: 
    \begin{enumerate}
    \item $R \subseteq V(C)$,
    \item $|N^i|\leq k-1$ for each $i \in [m]$,
    \item $C^0$ is $K_{2k+1}\oplus U_{h-1,d+2k}$-minor-free,
    \item $C^0$ is a minor of $G'-R$.
    \end{enumerate}
    In particular $C^0$ is a minor of $G$ and $\tw(C^0) < t$.
    
    If $h=1$, then $K_{2k+1}\oplus U_{h-1,d+2k}=K_{2k+1}$. 
    Moreover, since $(h,k) \neq (1,0)$, we have $k \geq 1$.
    So, we can apply~\Cref{theorem:illingworth_scott_wood_K_k_minor_free} to $C^0$, which is $K_{2k+1}$-minor-free, to obtain a graph $H^0$ with $\tw(H^0)\leq 2k-1=\tau(0,2k+1)$ and an $H^0$-partition $(V^0_x \mid x \in V(H^0))$ of $C^0$ of width at most $(d+2k)t = c(0,d+2k,2k+1) \cdot t$.
    If $h>1$, then apply induction to the instance $(h-1,d+2k,2k+1,t,C^0,\emptyset)$. 
    In both cases, we obtain a graph $H^0$ with $\tw(H^0)\leq\tau(h-1,2k+1)$ and an $H^0$-partition $(V^0_x \mid x \in V(H^0))$ of $C^0$ of width at most $c(h-1,d+2k,2k+1) \cdot t$.
    For every $i \in [m]$, the set $N^i-R$ is a clique in $C^0$. 
    Hence, the parts containing vertices in $N^i-R$ form a clique in $H^0$.

    Fix $i\in [m]$. 
    Let $D^i$ be the component of $G-\bigcup_{j=1}^k R_j - X$
    containing $C^i$.
    Let $X^i_1, \dots, X^i_q$ be the components of $G-\bigcup_{j=1}^k R_j - D^i$ with a neighbor in $D^i$.
    Note that $q \leq 2$ by the definition of $X$.

    Let $G^i$ be the graph obtained from $G$ as follows:
    \begin{enumerate}
        \item identify the vertices in $X^i_a$ into a single vertex $x^i_a$, for each $a \in [q]$,
        \item if $q > 0$ let 
        \begin{align*}
            \mathcal{R}^i &= \{R_j \mid j \in [k], r_j \in N^i\} \cup \{\{u\} \mid u \in N^i - R\}\cup\{\{x^i_a \mid a \in[q]\}\}\\
            \shortintertext{and if $q=0$, let}
            \mathcal{R}^i &= \{R_j \mid j \in [k], r_j \in N^i\} \cup \{\{u\} \mid u \in N^i - R\}.
        \end{align*}
        \item remove all vertices outside $V(C^i) \cup \bigcup \mathcal{R}^i$.
    \end{enumerate}

    Observe that $G^i$ is a minor of $G$. 
    This implies that $K_{k}\oplus U_{h,d}$ is not a minor $G^i$ and 
    $\tw(G^i) < t$.
    Since $N_{G}(V(C^i))-\bigcup_{j=1}^k R_j-X \subseteq V(D^i)$, $\mathcal{R}^i$ is a family of at most $|N^i|+1 \leq k$ pairwise disjoint non-empty sets, each of at most two vertices in $G^i$. Since $|N^i|\leq k-1$, there exists $j\in[k]$ such that $r_j\not\in N^i$ and therefore $R_j$ is disjoint from $V(G^i)$. 
    Thus, $|V(G^i)|<|V(G)|$. 

    Now apply induction to the instance
    $(h,d,k,t,G^i,\mathcal{R}^i)$.
    It follows that there is a graph $H^{i}$ with $\tw(H^{i})\leq\tau(h,k)$ and an $H^{i}$-partition $(V^{i}_x \mid x \in V(H^{i}))$ of $G^i$ 
    of width at most $c(h,d,k) \cdot t$ such that $\mathcal{R}^i = \{V_{x_{i,j}}^{i} \mid j\in[|\cgR^i|]\}$ for some clique $\{x_{i,1}, \dots,x_{i,|\mathcal{R}^i|}\}$ in $H^{i}$.

\begin{figure}[!htbp] 
   \centering 
   \includegraphics[scale=1]{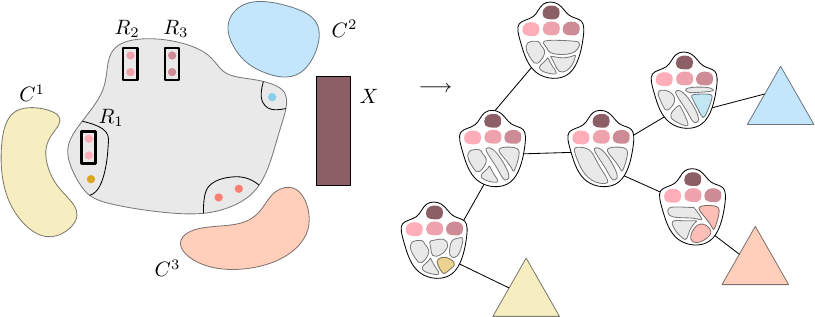} 
   \caption{After obtaining $X,C^0,C^1,\dots,C^m$, we call induction (on $h$) on $C^0$ (the grey region without the $R_i$s and with cliques attached to each $N^i$, see \Cref{fig:components:C}) to obtain $H^0$ and an $H^0$-partition along with a tree-decomposition.
   To $H^0$ we add $k+1$ dominant vertices $x_1, \dots, x_k,z$ corresponding to the parts $R_1, \dots, R_k$ and $X$.
   This gives the graph $\{x_1, \dots, x_k,z\} \oplus H^0$.
   To each bag of the tree-decomposition of $H^0$, we add parts corresponding to all $R_j$ and $X$. This yields a tree-decomposition of $\{x_1, \dots, x_k,z\} \oplus H^0$. 
   Next, we call induction (on the number of vertices) for each $C^i$ to obtain $H^i$ and an $H^i$-partition along with a tree-decomposition. To obtain the final result, that is, a graph $H$ with an $H$-partition, and a tree-decomposition of $H$, we
   perform a clique-sum between $\{x_1, \dots, x_k, z\} \oplus H^0$ and each $H^i$. This is possible since the parts corresponding to vertices in $N^i$ form a clique in $H^0$, and so have a common bag in the tree-decomposition of $H^0$.}
   \label{fig:final:clique:sums}
\end{figure}

    Finally, define the graph $H$ by the following process (see \Cref{fig:final:clique:sums} for an informal summary of the rest of the proof).
    Start with the disjoint union of $H^0$ and $H^1, \dots, H^m$.
    Add a clique
 $\{x_1, \dots, x_{k},z\}$   of  $k+1$ new vertices, each adjacent to every vertex of $H^0$.
    For every $i \in [m]$, 
    let $f_i$ be a mapping of $\{x_{i,1},\dots, x_{i,|\mathcal{R}^i|}\}$ to $V(H^0) \cup\{x_1, \dots, x_k,z\}$ defined as follows:
\[
f_i(x_{i,j}) = \begin{cases}
w& \textrm{if $w \in V(H^0)$ and $V^{i}_{x_{i,j}} \subseteq V^0_w$,}\\
x_{j'}& \textrm{if $j' \in [k]$ and $V^{i}_{x_{i,j}} = R_{j'}$,}\\
z& \textrm{if $V^{i}_{x_{i,j}} = \{x^i_a \mid a\in[q]\}$}
\end{cases}
\]
for each $j\in[|\mathcal{R}^i|]$.
    Now, identify $x_{i,j}$ with $f_i(x_{i,j})$ for every $i \in [m]$ and every $j \in [|\mathcal{R}_i|]$.
    This identification step can be seen as a result of the sequence of clique-sums between $\{x_1, \dots, x_k,z\}\oplus H^0$ and the graphs $H^{i}$ according to $f_i$ for $i\in[m]$.
    This completes the definition of $H$. Note that 
    \begin{align*}
    \tw(H) &\leq \max\left\{{\tw(\{x_1, \dots, x_k,z\}\oplus H^0)},\max_{i\in[m]}\tw(H^i)\right\}\\
    &\leq \max\left\{k+1+\tau(h-1,2k+1), \tau(h,k)\right\}\\
    &\leq \tau(h,k).
    \end{align*}
   
Define an $H$-partition $(V_x \mid x \in V(H))$ of $G$, where for each $x\in V(H)$,
    \[
    V_x = \begin{cases}
       R_j &\textrm{if $x=x_j$ for $j\in[k]$,} \\
       X &\textrm{if $x=z$,} \\
       V_x^0&\textrm{if $x \in V(H^0)$,} \\
       V_x^{i}&\textrm{if $x \in V(H^{i}) - \{x_{i,1}, \dots, x_{i,|\cgR^i|}\}$.}
    \end{cases}
    \]

    As mentioned $\tw(H)\leq \tau(h,k)$ so \cref{item:twtau} holds.
    For every $x \in V(H)$ distinct from $z$, $|V_x| \leq \max\{c(h-1,d+2k,2k+1)\cdot t,c(h,d,k)\cdot t\} = c(h,d,k) \cdot t$, and $|V_z| = |X| \leq (2(d-1)2^k-1) \cdot t \leq c(h,d,k) \cdot t$. Thus, \cref{item:pwx} holds.
    \Cref{item:RV} holds by the definition of the $H$-partition. 
    Finally, \cref{item:Hcomplete} holds by the definition of $H$.
\end{proof}


\section{Proof of Theorem~\ref{thm-wcol-main}}
\label{sec:wcol_main}

First, we recall the definition of weak coloring numbers.
Given a graph $G$, a linear ordering $\sigma$ of $V(G)$, a vertex $v$ of $G$, and an integer $r\geq 1$, define $\WReach_r[G,\sigma,v]$ to be the set of vertices $w$ of $G$ such that there is a path from $v$ to $w$ of length at most $r$ whose minimum with respect to $\sigma$ is $w$.
Then define $\wcol_r(G,\sigma) = \max_{v \in V(G)}|\WReach_r[G,\sigma,v]|$
and $\wcol_r(G) = \min_\sigma \wcol_r(G,\sigma)$.

In this section, we prove the following theorem.

\weakcol*

Theorem~\ref{thm-wcol-main} follows immediately from the next result.

\begin{theorem}\label{theorem:bound_wcol_Uhd_minor_free}
There exist functions $f$ and $g$ such that
for all integers $h,d,r\geq 1$ and 
for every $U_{h,d}$-minor-free graph $G$,
\[
\wcol_r(G) \leq f(h,d)\cdot r^{g(h)}.
\]
\end{theorem}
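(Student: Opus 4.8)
The plan is to prove Theorem~\ref{theorem:bound_wcol_Uhd_minor_free} by combining two ingredients: the product structure decomposition underlying \Cref{XMinorFreeProduct} (concretely \Cref{lemma:main_induction}), and the theory of chordal partitions as used in \cite{vdHetal17} to bound weak coloring numbers. The product structure result alone is not quite enough: it gives, for a $U_{h,d}$-minor-free graph $G$ with $\tw(G)<t$, a graph $H$ of bounded treewidth and an $H$-partition of $G$ of width at most $c(h,d,0)\cdot t$, whence $\wcol_r(G)\le c(h,d,0)\cdot t\cdot\wcol_r(H)\le c(h,d,0)\cdot t\cdot r^{\tau(h,0)}$ using the $\binom{r+\tw(H)}{\tw(H)}$ bound of \cite{Grohe15}. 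The problem is the factor $t$, which can be as large as $|V(G)|$. So the real content is to \emph{remove the dependence on $\tw(G)$}: we must handle graphs of large treewidth.

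\textbf{Step 1: Chordal partitions.} First I would recall the chordal-partition machinery. A \emph{chordal partition} of $G$ is a partition $\mathcal{P}$ of $V(G)$ into parts $P_1,P_2,\dots$ (processed in order) such that each $G[P_i]$ is connected and $P_i$ is a clique in the minor of $G-(P_1\cup\dots\cup P_{i-1})$ obtained by contracting each subsequent part. The key fact (see \Cref{lemma:Heuvel_et_al_wcol_Kt_minor_free}) is that for a $K_t$-minor-free graph one can build a chordal partition where the quotient is chordal with clique number $<t$, and crucially each part $P_i$ can be taken to be the union of a bounded number of geodesics in $G$. This gives a vertex ordering (order parts as processed, within a part use a BFS-type order along the geodesics) for which $\WReach_r[G,\sigma,v]$ is controlled: only boundedly many earlier parts can be reached, and within each such part only the vertices near the short walk to $v$ are weakly reachable, which is $O(r)$ per geodesic. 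This yields $\wcol_r(G)\le \mathrm{poly}(r)$ with degree depending on the number of geodesics per part and the clique number of the quotient.

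\textbf{Step 2: Reducing $U_{h,d}$-minor-freeness to a chordal-partition hypothesis.} Since $U_{h,d}$-minor-free graphs are $K_N$-minor-free for $N=|V(U_{h,d})|$, we can apply the existing chordal-partition result, but that only gives degree depending on $N$, not on $h$ — exactly what we want to avoid. So the heart of the proof is to show that for $U_{h,d}$-minor-free graphs one can build a chordal partition whose \emph{quotient graph} has treewidth bounded by a function of $h$ only (not $|V(X)|$), and whose parts are unions of $O_h(1)$ geodesics. Here is where \Cref{lemma:main_induction}, the variant of the Helly property (\Cref{lem:X:geodesics}), and \Cref{lemma:find_all_cuts_and_the_simple_part_in_the_middle} come together: I would mimic the induction on $h$ from \Cref{lemma:main_induction}, but instead of producing an $H$-partition of width $c(h,d,0)\cdot t$, produce a chordal-type partition where the ``$X$'' set (the union of $(d-1)2^k$ bags intersecting the family $\mathcal{F}$) is replaced by a set built from a bounded number of geodesics — this is precisely what \Cref{lem:X:geodesics} enables, letting us hit all the relevant connected subgraphs with boundedly many geodesics rather than boundedly many bags. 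Recursing on the ``middle'' part $C^0$ (which is $K_{2k+1}\oplus U_{h-1,d+2k}$-minor-free) and on the ``side'' parts $C^i$ (which have fewer vertices), and gluing via clique-sums as in the original proof, produces a partition of $G$ into parts each being a union of $f(h)$ geodesics, such that the quotient has treewidth $\le \tau(h,0)$ (or some similar explicit function of $h$), and such that the partition refines a chordal structure.

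\textbf{Step 3: Bounding $\wcol_r$ from the structured partition.} Given such a partition $\mathcal{P}$ with quotient $H$ of treewidth $\le g_0(h)$ and each part a union of $\le f_0(h)$ geodesics, I would take a vertex ordering that is ``layered'' according to a tree-decomposition of $H$ of width $g_0(h)$ and, within each part, ordered along its geodesics. Then $\WReach_r[G,\sigma,v]$ for a vertex $v$ in part $P$ can only reach vertices in parts $P'$ that are ``$r$-close'' to $P$ in $H$; since $H$ has treewidth $g_0(h)$, the number of such parts that can be simultaneously weakly reached is $O(r^{g_0(h)})$ (this is the standard $\binom{r+g_0(h)}{g_0(h)}$ bound applied to the quotient), and within each such part $P'$, weak reachability along a path of length $\le r$ from $v$ can only reach $O(f_0(h)\cdot r)$ vertices (the geodesic property bounds how far along each constituent geodesic one can go while staying $\sigma$-minimal). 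Multiplying gives $\wcol_r(G)\le f(h,d)\cdot r^{g(h)}$ with $g(h)=g_0(h)+1$, as required; tracking the additive dependence on $d$ through the recursion (each level increments $d$ by $2k$, and $k$ roughly doubles) gives the function $f(h,d)$.

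\textbf{The main obstacle} I expect is Step 2: proving that the geodesics used to form the separating set $X$ at each level of the recursion can be chosen so that, after recursing and gluing, the \emph{global} partition still has every part equal to a union of boundedly many geodesics \emph{of $G$} (not of some minor or subgraph). Geodesics do not behave well under taking minors or under clique-sums, so one needs the Helly-type statement \Cref{lem:X:geodesics} precisely to control this — a path that is geodesic in a piece may not be geodesic in $G$, and reconciling the local geodesic choices with the global distance function is the delicate point. A secondary difficulty is bookkeeping the interaction between the $\mathcal{R}^i$-attached-model induction (which forces $k$ to grow) and the geodesic count, ensuring the number of geodesics per part stays bounded by a function of $h$ alone rather than blowing up with $k$ and hence with $|V(X)|$.
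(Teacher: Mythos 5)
Your plan is correct and follows essentially the same route as the paper: the paper's key technical step (\Cref{lemma:structure_for_wcol}) is exactly your Step~2 — it redoes the induction of \Cref{lemma:main_induction} with the bag-based hitting set replaced by the geodesic-based one from \Cref{lem:X:geodesics}, uses \Cref{lemma:Heuvel_et_al_wcol_Kt_minor_free} in the base case, and outputs an $H$-partition with an elimination ordering whose parts split into a small set $A_{x_i}$ plus a union of boundedly many \emph{subgeodesics} in the graph induced by the part and later parts; the final bound is then obtained exactly as in your Step~3 via \Cref{theorem:Grohe_et_al_wcol_bounded treewidth} and the $2r+1$ intersection bound of \Cref{lemma:intersection_ball_with_geodesic}. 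The obstacle you flag (local geodesics versus global ones) is precisely what the paper's notion of subgeodesic and the clique-edge distance-preservation argument are designed to handle.
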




The proof of~\Cref{theorem:bound_wcol_Uhd_minor_free} builds on a good understanding of the behavior of weak coloring numbers of graphs excluding a complete graph as a minor, and also of graphs of bounded treewidth.
Van den Heuvel, Ossona de Mendez, Quiroz, Rabinovich, Siebertz~\cite{vdHetal17} proved that 
for every integer $t\geq4$, 
every $K_t$-minor-free graph $G$ satisfies
$\wcol_r(G)\leq \binom{r+t-2}{t-2}(t-3)(2r+1)$ for every integer $r\geq 1$.
Their proof technique, specifically chordal partitions of graphs, inspired a lot of follow-up research, including our work on weak coloring numbers.
The base case of our main technical contribution (\Cref{lemma:structure_for_wcol}) relies on the following structural result from~\cite{vdHetal17} that underlies the upper bound on weak coloring numbers for $K_t$-minor-free graphs. We include a rough sketch of the proof -- see \Cref{fig:wcol_Kt_minor_free}.
Recall that a geodesic in a graph is a shortest path between its endpoints. 
\begin{lemma}[Lemma 4.1 in~\cite{vdHetal17}]
\label{lemma:Heuvel_et_al_wcol_Kt_minor_free}
    Let $t\geq 3$
    and let $G$ be a $K_t$-minor-free graph.%
    \footnote{
    The statement in~\cite{vdHetal17} assume $t \geq 4$ and item~\ref{item:heuvel_et_al_wcol_Kt_minor_free_number_of_geodesics} bounds the number of geodesics by $t-3$. However the statement also holds for $t=3$ with $t-3$ replaced by $\max\{t-3,1\}$ in item~\ref{item:heuvel_et_al_wcol_Kt_minor_free_number_of_geodesics}.
    }
    Then there is a graph $H$ and an $H$-partition $(V_x \mid x \in V(H))$ of $G$ together with an ordering $x_1, \dots, x_{|V(H)|}$ of $V(H)$ such that 
    \begin{enumerate}
        \item $G[V_i]$ is connected for every $i \in [|V(H)|]$, in particular, $H$ is a minor of $G$;
        \item $\{x_j \mid j < i \text{ and } x_jx_i \in E(H)\}$ is a clique in $H$, for every $i \in [|V(H)|]$;
        \item $\tw(H) \leq t-2$;
        \item $V_{x_i}$ is the union of the vertex sets of at most $\max\{t-3,1\}$ geodesics in $G[V_{x_i} \cup \dots \cup V_{x_{|V(H)|}}]$, for every $i \in [|V(H)|]$.\label{item:heuvel_et_al_wcol_Kt_minor_free_number_of_geodesics}
    \end{enumerate}
\end{lemma}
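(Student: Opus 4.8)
The plan is to build the $H$-partition and its ordering greedily, peeling off one part at a time by breadth-first search, in the spirit of the chordal-partition technique; the hypothesis that $G$ has no $K_t$-minor will enter in exactly one place, a minor-extraction argument capping the relevant parameters at $t-3$ and $t-2$. First I would reduce to $G$ connected: concatenating ordered $H_i$-partitions of the components $G_i$ of $G$, with $H=\bigsqcup_i H_i$, preserves (1)--(4), since the treewidth of a disjoint union is the maximum. The case $t=3$ is the degenerate one in which $G$ is a forest and $\max\{t-3,1\}=1$: rooting each tree and taking the first part to be a root-to-leaf path, every component of the remainder meets it in a single vertex, so the quotient is again a forest and one recurses; this already exhibits the general pattern, with one geodesic per part and back-neighbourhoods of size $1=t-2$.

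For $t\ge 4$ I would set up a recursion whose state is a \emph{piece}: a connected induced subgraph $G'$ of $G$ together with a short list $P_1,\dots,P_a$ ($a\le t-3$) of geodesics of $G$ disjoint from $V(G')$, such that the vertices of $\bigcup_i V(P_i)$ having a neighbour in $G'$ lie in parts already created that form a clique $Q$ of size $a$ in the current quotient $H$, and such that $G'$ with $Q$ ``filled in'' is a minor of $G$ (hence $K_t$-minor-free). The initial piece is $(G;\ \emptyset)$. To process a piece, let $S$ be the set of vertices of $G'$ with a neighbour in $\bigcup_i V(P_i)$, run a BFS of $G'$ from $S$ (or from an arbitrary root if $S=\emptyset$), and take the next part $V_x$ to be a union of at most $t-3$ geodesics of $G'$ following BFS paths, chosen so that $V_x\supseteq S$, so that $G'[V_x]$ is connected, and so that every component $C$ of $G'-V_x$ has $N_{G'}(V(C))\subseteq V_x$ covered by only a few of these geodesics; note these are geodesics of the tail graph as well, because $G'$ is a component of the graph obtained from $G$ by deleting the earlier parts. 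Place $x$ immediately after all previously created parts; the new pieces are the components $C$ of $G'-V_x$, each carrying the geodesic list obtained from $P_1,\dots,P_a$ by retaining those still meeting $C$ and appending the geodesics of $V_x$ that reach $C$, together with a subclique of $Q\cup\{x\}$. Recursing on $|V(G')|$ this terminates; properties (1) and (4) are immediate, (2) holds because the only earlier parts adjacent to a part built inside $C$ lie in $Q\cup\{x\}$, and (3) follows from (2) once back-neighbourhoods have size at most $t-2$.

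The crux, and the main obstacle, is a single minor-extraction lemma that simultaneously keeps every piece's geodesic list of length at most $t-3$ (so that each part is a union of at most $t-3$ geodesics) and keeps every back-neighbourhood of size at most $t-2$ (so that $\tw(H)\le t-2$). The point is that if some part had $t-1$ earlier neighbours forming a clique, then contracting that part to a single vertex already produces a $K_t$-minor; and a too-long geodesic list is ruled out similarly, by contracting the piece together with the offending connected geodesic-fragments --- each adjacent to the piece and, after rerouting along the BFS layering, pairwise adjacent --- into the branch sets of a $K_t$-model in $G$. Arranging that one family of at most $t-3$ BFS geodesics defining $V_x$ does its job for all components of $G'-V_x$ at once, and carrying out this rerouting together with the exact charge accounting for the lists, is the delicate part of the argument; everything else is routine bookkeeping of the recursion.
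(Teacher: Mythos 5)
Your overall strategy --- iteratively peeling off parts, each part being a union of few geodesics that join representatives of the previously created neighbouring parts, with $K_t$-minor-freeness entering only to cap the size of the clique of earlier parts seen by each component --- is exactly the chordal-partition argument that the paper sketches in the caption of \Cref{fig:wcol_Kt_minor_free} and that is carried out in~\cite{vdHetal17}. However, as written your construction of the new part does not work. You require $V_x\supseteq S$, where $S$ is the set of \emph{all} vertices of $G'$ with a neighbour in the earlier geodesics $P_1,\dots,P_a$. This set can be arbitrarily large and spread out (take $G'$ to be a star all of whose leaves are adjacent to $P_1$), so it cannot in general be covered by $t-3$ geodesics of $G'$. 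The correct move is much weaker: for each of the at most $t-2$ earlier parts adjacent to $G'$, pick a \emph{single} vertex of $G'$ adjacent to it, and let $V_x$ be the union of at most $t-3$ geodesics of $G'$ joining these representatives. One does not need $V_x$ to absorb $S$, nor does one need each component of $G'-V_x$ to meet only ``a few'' of the chosen geodesics: the invariant that propagates is simply that the earlier parts adjacent to any component $C'$ of $G'-V_x$ form a subset of $Q\cup\{x\}$, which is a clique in $H$ because $V_x$ touches every part of $Q$; and its size is at most $t-2$ because contracting $C'$ together with the (connected, pairwise adjacent) parts of this clique yields a complete minor.

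Beyond this, you explicitly defer the ``crux'' --- the simultaneous control of the geodesic lists and of the back-neighbourhoods, together with a ``rerouting along the BFS layering'' --- calling it the delicate part. In the actual argument there is nothing left to do at that point: no rerouting is needed, the only minor extraction is the one-line contraction above, and the count $\max\{t-3,1\}$ comes from joining at most $t-2$ representatives by a star of geodesics (your observation that geodesics of $G'$ are geodesics of $G[V_{x_i}\cup\dots\cup V_{x_{|V(H)|}}]$, since $G'$ is a component of that graph, is the right justification for item~\ref{item:heuvel_et_al_wcol_Kt_minor_free_number_of_geodesics}). So the proposal identifies the right framework but misstates the construction of the parts and leaves the key verification unproven.
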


\begin{figure}[!htbp] 
   \centering 
   \includegraphics[scale=1]{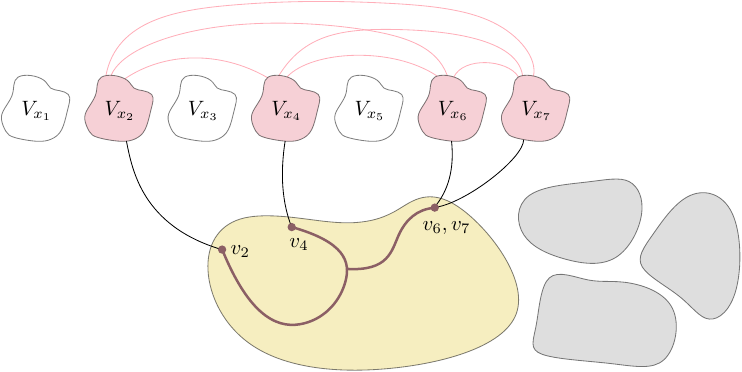} 
   \caption{We sketch the proof of \Cref{lemma:Heuvel_et_al_wcol_Kt_minor_free} with an illustration. The construction of $H$ is an iterative inductive procedure, where the items in the statement are invariants. 
   We illustrate an inductive step using an example in the figure. 
   Suppose that parts $V_{x_1}, \dots, V_{x_7}$
   are already constructed. 
   Pick any component of the remainder of the graph, say the yellow one. 
   The yellow component is adjacent to some parts: $V_{x_2}, V_{x_4}, V_{x_6}, V_{x_7}$. By the invariant, the parts form a clique in $H$, and so, we have a model of $K_4$.
   Thus together with the yellow component, we have a model of $K_5$. 
   Recall that in the general setting, the graph is $K_t$-minor-free, thus, the number of parts that the yellow component is adjacent to is bounded by $t-1$. 
   For each $i \in \{2,4,6,7\}$, fix a vertex $v_i$ in the yellow component adjacent to $V_{x_i}$ (note that these vertices are not necessarily pairwise distinct). 
   Now, connect the vertices $v_i$ by geodesics in the yellow part, it suffices to take $t-2$ of them. Finally, take the vertices $v_i$ and the geodesics as the new part $V_{x_8}$. 
   It is easy to verify that the invariant is preserved.}
   \label{fig:wcol_Kt_minor_free}
\end{figure}

Let $t$ and $r$ be integers with $t,r\geq0$. 
For every graph $G$ with $\tw(G)\leq t$, we have $\wcol_r(G)\leq \binom{r+t}{t}$, 
as proved by Grohe, Kreutzer, Rabinovich, Siebertz, and Stavropoulos~\cite{Grohe15}. We will need the following slightly more precise statement that follows line-by-line from the proof in~\cite{Grohe15}.

\begin{lemma}
[Theorem 4.2 in \cite{Grohe15}]
\label{theorem:Grohe_et_al_wcol_bounded treewidth}
    Let $t$ and $r$ be integers with $t,r\geq0$.
    Let $G$ be a graph and let
    $\sigma=(x_1, \dots, x_{|V(G)|})$ 
    be an ordering of $V(G)$ such that for every $i \in [|V(G)|]$, the set 
    $\{x_j  \mid j<i \textrm{ and } x_i x_j \in E(G)\}$ is a clique of size at most $t$ in $G$.
    Then
    \[
        \wcol_r(G,\sigma) \leq \binom{r+t}{t}.
    \]
\end{lemma}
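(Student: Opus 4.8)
The plan is to deduce the lemma from a more flexible statement that can be proved by induction. For a graph $G$ with an ordering $\sigma$ as in the hypothesis, a clique $\{u_1,\dots,u_q\}$ of $G$ with $u_1<_\sigma\dots<_\sigma u_q$ (necessarily $q\le t+1$, since the $\sigma$-last vertex of a clique has the others in its left-neighbourhood), and integers $\rho_1,\dots,\rho_q\ge 0$, I would prove
\[
\Bigl|\bigcup_{j=1}^{q}\WReach_{\rho_j}[G,\sigma,u_j]\Bigr|\;\le\;\sum_{j=1}^{q}\binom{\rho^{(j)}+t+1-j}{t+1-j},
\]
where $\rho^{(1)}\ge\rho^{(2)}\ge\dots\ge\rho^{(q)}$ is the non-increasing rearrangement of $\rho_1,\dots,\rho_q$. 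The lemma is the special case $q=1$, $\rho_1=r$, for which the right-hand side is exactly $\binom{r+t}{t}$; but the induction genuinely needs the version with a clique and several radii, which is why it is worth stating in this form.

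The proof would be by induction on $|V(G)|$. Let $x$ be the $\sigma$-last vertex of $G$. Since $x$ is $\sigma$-maximal, all its neighbours are left-neighbours, so $N_G(x)$ is a clique of size at most $t$. The key elementary step is \emph{shortcutting}: because $N_G(x)$ is a clique, on any walk of $G$ having $x$ as an internal vertex the two neighbours of $x$ are adjacent, so the walk can be re-routed to avoid $x$ without increasing its length; and since $x$ is $\sigma$-maximal it is never the $\sigma$-minimum of a witnessing path starting at some $u\ne x$. Hence $\WReach_{\rho}[G,\sigma,u]=\WReach_{\rho}[G-x,\sigma,u]$ for every $u\ne x$ and every $\rho\ge 0$. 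If $x\notin\{u_1,\dots,u_q\}$, the left-hand side of the target is then unchanged by passing to $G-x$, and the induction hypothesis for $G-x$ with the same clique and radii finishes this case. If $x\in\{u_1,\dots,u_q\}$, then $x=u_q$ and $\{u_1,\dots,u_{q-1}\}\subseteq N_G(x)$; decomposing a weakly-reachable path from $x$ according to its first edge (which lands in $N_G(x)$) gives $\WReach_{\rho_q}[G,\sigma,x]\subseteq\{x\}\cup\bigcup_{u\in N_G(x)}\WReach_{\rho_q-1}[G-x,\sigma,u]$ (the second term being just $\{x\}$ when $\rho_q=0$), and combining this with the shortcutting identity for $u_1,\dots,u_{q-1}$ yields
\[
\bigcup_{j=1}^{q}\WReach_{\rho_j}[G,\sigma,u_j]\;\subseteq\;\{x\}\cup\bigcup_{u\in N_G(x)}\WReach_{\widehat\rho(u)}[G-x,\sigma,u],
\]
with $\widehat\rho(u_j)=\max(\rho_j,\rho_q-1)$ for $j<q$ and $\widehat\rho(u)=\rho_q-1$ for the remaining $u\in N_G(x)$. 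Applying the induction hypothesis to $G-x$ with the clique $N_G(x)$ and radii $\widehat\rho$ reduces everything to a numerical inequality.

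That inequality asserts that $1$ plus the inductive bound for $(N_G(x),\widehat\rho)$ is at most the target bound for $(\{u_1,\dots,u_q\},\rho)$. This is exactly where the weights $t+1-j$ are engineered so that the sums telescope: letting $K$ be the number of $j<q$ with $\rho_j\ge\rho_q$, the non-increasing rearrangement of $\widehat\rho$ is the $K$ largest of $\rho_1,\dots,\rho_{q-1}$ followed by $|N_G(x)|-K$ copies of $\rho_q-1$, and the first $K$ terms match those of the target; the remaining comparison then follows from monotonicity of binomial coefficients together with the hockey-stick identity $\sum_{k=0}^{m}\binom{a+k}{k}=\binom{a+m+1}{m}$, which absorbs the extra unit. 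The base case $|V(G)|\le 1$ is immediate (the union has size at most $\max(q,1)$, which the right-hand side bounds). I expect the main obstacle to be precisely this last piece of bookkeeping: pinning the index shifts and the single unit of slack so that the bound is simultaneously tight enough to give $\binom{r+t}{t}$ in the $q=1$ case and still an inequality in every case. It is routine but delicate, and it is this computation that is carried out line by line in the proof of Theorem~4.2 in~\cite{Grohe15}.
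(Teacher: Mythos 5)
Your proposal is correct. The paper itself gives no proof of this lemma---it only remarks that the statement ``follows line-by-line from the proof'' of Theorem~4.2 in~\cite{Grohe15}---and your reconstruction is a sound, self-contained version of exactly that argument: the strengthened inductive statement over a clique $\{u_1,\dots,u_q\}$ with individual radii, the shortcutting identity $\WReach_{\rho}[G,\sigma,u]=\WReach_{\rho}[G-x,\sigma,u]$ obtained from the $\sigma$-last (simplicial) vertex $x$, and the final hockey-stick computation all check out (in particular, writing $K$ for the number of $j<q$ with $\rho_j\ge\rho_q$, one gets $1+\sum_{i=1}^{t-K}\binom{\rho_q-1+i}{i}\le\binom{\rho_q+t-K}{t-K}$, which is precisely the $j=K{+}1$ term of the target, with equality when $|N_G(x)|=t$). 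Your argument also yields the bound for the given ordering $\sigma$ itself, which is the ``slightly more precise'' form the paper needs.
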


Note that the above two lemmas easily imply the mentioned bound on $\wcol_r(G)$ for $K_t$-minor-free graphs. To see this, order the vertices according to the index of a part of the $H$-partition that they are in, and within each part, we order the vertices arbitrarily. Now, to verify the bound, we need a simple observation on geodesics that we will prove later, see \Cref{lemma:intersection_ball_with_geodesic}.

If a graph $G$ has bounded treewidth, say $\tw(G)<t$, then $G$ satisfies 
the Helly property articulated by~\Cref{lemma:helly_property_tree_decomposition}. 
Namely, when $\cgF$ is a family of connected subgraphs of $G$, then either there are $d$ pairwise disjoint members of $\cgF$, or there is subset of $(d-1)t$ vertices of $G$ hitting all members of $\cgF$.
One of the main difficulties that arises when trying to prove~\Cref{theorem:bound_wcol_Uhd_minor_free} is to find an equally useful statement as \Cref{lemma:helly_property_tree_decomposition}, but for $K_t$-minor-free graphs.
This is the motivation for~\Cref{lem:X:geodesics}. We defer the proof of it to~\Cref{sec:graph-minor-struggles}.

\begin{restatable}{lemma}{geodesics}
\label{lem:X:geodesics}
    There exist functions $\gamma$, $\delta$ such that for all integers $t,d \geq 1$, for every $K_t$-minor-free graph $G$, for every family $\cgF$ of connected subgraphs of $G$ either:
    \begin{enumerate}
        \item there are $d$ pairwise vertex-disjoint subgraphs in $\mathcal{F}$, or \label{lem:item:X:geodesics:disjoint}
        \item there exist $A \subseteq V(G)$ such that $|A| \leq (d-1) \gamma(t)$, and a subgraph $X$ of $G$, where
        $X$ is the union of at most $(d-1)^2\delta(t)$ geodesics in $G-A$,\label{lem:item:X:geodesics:union} and for every $F \in \cgF$ we have $V(F) \cap (V(X) \cup A) \neq \emptyset$.\label{lem:item:X:geodesics:hit}
    \end{enumerate}    
\end{restatable}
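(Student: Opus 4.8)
The plan is to prove this via the Graph Minor Structure Theorem, reducing the $K_t$-minor-free case to the bounded-genus (in fact, bounded-treewidth plus surface-embedded) case, and then handling the surface case directly using geodesics. First I would set up the skeleton: apply the Robertson--Seymour structure theorem to $G$, obtaining a tree-decomposition of $G$ of bounded adhesion such that each torso, after deleting a bounded apex set, is obtained by clique-sums / vortices from a graph embedded in a surface of bounded genus, with a bounded number of bounded-depth vortices. All the ``bounded'' quantities here are functions of $t$ only, which is where $\gamma$ and $\delta$ will come from. The overall strategy is: either $\cgF$ has $d$ pairwise disjoint members (case~\ref{lem:item:X:geodesics:disjoint}), or we build the hitting set $A \cup V(X)$ piece by piece, charging each piece either to one of the apex sets, to a vortex, or to geodesics in the surface part.

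Next I would reduce to the ``connected, few-disjoint-members'' situation. By \Cref{lemma:helly_property_tree_decomposition} applied to the tree-decomposition from the structure theorem, if $\cgF$ has fewer than $d$ pairwise disjoint members then all of them are hit by the union of at most $d-1$ bags. But bags here are not small --- each is a torso plus adhesion sets --- so this alone is not enough; instead I would use it to localize: it suffices to find, for each relevant bag, a bounded (in $d$ and $t$) family of vertices and geodesics within that torso that hits all members of $\cgF$ living ``inside'' it, and then take the union over the at most $d-1$ relevant bags. This multiplies the bounds by $d-1$, which is consistent with the $(d-1)\gamma(t)$ and $(d-1)^2\delta(t)$ shape in the statement (one factor of $d-1$ from the number of bags, another from the per-bag analysis). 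So the core task becomes: prove the lemma for a single piece of the decomposition, i.e. for a graph that is a bounded-apex, bounded-vortex perturbation of a surface-embedded graph.

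For that core case, I would peel off the apex vertices directly into $A$ (there are $\bigOh(1)$ of them, contributing to $\gamma(t)$), and contract / absorb each vortex: since there are boundedly many vortices of bounded depth, a vortex contributes only boundedly many vertices, which can also go into $A$ (or be covered by $\bigOh(1)$ geodesics). What remains is a graph $G_0$ embedded in a surface $\Sigma$ of genus $\bigOh_t(1)$, together with the family $\cgF_0$ of those members of $\cgF$ that survive. Now the argument of Pilipczuk--Siebertz for bounded-genus graphs applies: if $\cgF_0$ has $\ge d$ pairwise disjoint members we are in case~\ref{lem:item:X:geodesics:disjoint}; otherwise, using a BFS/geodesic layering of $G_0$ and cutting $\Sigma$ along $\bigOh_t(1)$ geodesics to reduce to a disk, one finds $\bigOh_t(d)$ (more precisely $(d-1)\cdot\bigOh_t(1)$ for the isometric tree / BFS part, plus $\bigOh_t(1)$ for the genus-reducing cuts) geodesics in $G_0$ whose union meets every member of $\cgF_0$ --- this is where the nerve/Helly-type argument on a planar (disk) piece is used, and it is exactly analogous to how \Cref{lemma:helly_property_tree_decomposition} gives a hitting set for bounded treewidth, with ``bag'' replaced by ``geodesic''. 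These geodesics in $G_0 = G - A$ are geodesics in $G-A$ after we add back the removed pieces to $A$, as required.

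The hard part will be two-fold. First, bookkeeping the interaction between the clique-sum / tree-decomposition structure and geodesic distances: a shortest path in a torso need not be a shortest path in $G$, and conversely members of $\cgF$ may thread through several bags, so one has to be careful that ``geodesic in $G-A$'' (the global condition demanded by item~\ref{lem:item:X:geodesics:union}) is actually achieved --- likely this forces $A$ to include all adhesion sets of the $\le d-1$ relevant bags (still $\bigOh_t(d)$ vertices) so that the relevant torso becomes an isometric-enough subgraph. Second, and more delicate, is the surface case itself: pushing the genus down to $0$ by cutting along geodesics while keeping the count of cutting geodesics bounded by a function of $t$ alone (not of $d$), and then running the planar Helly argument so that the total number of geodesics is $\bigOh_t((d-1)^2)$ rather than exponential in $d$. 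Getting the quadratic dependence $(d-1)^2\delta(t)$ right, as opposed to something worse, is the main quantitative obstacle, and I expect it to come from one factor of $d-1$ for ``which bag'' and one for ``the planar covering within that bag's surface piece''.
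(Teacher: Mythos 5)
Your skeleton matches the paper's proof: Graph Minor Structure Theorem, then \Cref{lemma:helly_property_tree_decomposition} to localize to at most $d-1$ torsos (one factor of $d-1$), then a per-torso geodesic Helly argument built on the Pilipczuk--Siebertz partition (the other factor of $d-1$), with apex sets going into $A$. However, there are two concrete gaps. First, you propose to absorb each vortex into $A$ on the grounds that ``a vortex contributes only boundedly many vertices.'' This is false: a vortex of bounded \emph{width} can contain arbitrarily many vertices (the cyclic permutation $\Omega_i$ indexing the vortical decomposition is unbounded in length; only the individual bags $B^i_x$ are small). The vortices therefore cannot be deleted; they must be carried through the geodesic-partition argument. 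The paper does this in \Cref{lemma:geodesic_partition_k_almost_embeddable_graphs} by adding the vortex boundary cycles and one extra hub vertex to the surface part (raising the genus by $2(s-1)$), showing that distances change by at most $1$, that each geodesic of the modified graph splits into at most six geodesics of the original, and that the vortex bags can be appended to the bags of the quotient tree-decomposition at bounded multiplicative cost.

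Second, your fix for the isometry problem --- that a shortest path inside a torso need not be a shortest path in $G-A$ --- is to add ``all adhesion sets of the $\le d-1$ relevant bags'' to $A$, claiming this is $\bigOh_t(d)$ vertices. A node of the tree-decomposition can have arbitrarily many children, hence arbitrarily many adhesion sets, so this set is unbounded; and in any case deleting the adhesion sets does not make the torso isometric in $G-A$, it just disconnects things. The actual resolution in the paper is an auxiliary graph $G^*$: for every ``interesting'' pair $u,v$ of bag vertices (those lying in a common bag elsewhere in the tree and in the same component of $G-A$) one adds a shortcut path $P_{uv}$ of length exactly $\dist_{G-A}(u,v)$ and a slightly longer path $P'_{uv}$; one then checks that $G^*$ is still $(\alpha(t),\alpha(t),2\alpha(t)+2,0)$-almost-embeddable (it is obtained from the torso by deleting apexes and duplicating/subdividing edges), that the projected family $\cgF^*$ preserves connectivity and pairwise intersections, and that each geodesic of $G^*$ lifts back to at most three geodesics of $G-A$. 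This auxiliary construction is the key idea missing from your plan, and without it (or a substitute) the per-torso step does not produce geodesics of $G-A$ as required by item~\ref{lem:item:X:geodesics:union}.
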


With~\Cref{lem:X:geodesics} in hand, 
we are ready to proceed with~\Cref{lemma:structure_for_wcol}, the key technical contribution standing behind~\Cref{theorem:bound_wcol_Uhd_minor_free}.
The proof relies on some ideas from the proof of \Cref{lemma:main_induction}, see \Cref{fig:wcol_general} for a sketch of the proof.
After the proof of~\Cref{lemma:structure_for_wcol} we complete the final argument for~\Cref{theorem:bound_wcol_Uhd_minor_free}.
Recall the definition of $\tau : \mathbb{Z}^2_{\geq 0} \rightarrow \mathbb{Z}$:
\begin{align*}
    \tau(0,k) &= k-2, \textrm{ and}\\ 
    \tau(h,k) &= \tau(h-1,2k+1)+k+1\ \textrm{for every $h\geq 1$}.
\end{align*}

Let $t(h,d,k)$ be the number of vertices in $K_{k}\oplus U_{h,d}$; that is, for all $h,d\geq1$ and $k\geq0$,
\[
t(h,d,k)= k+d(d^{h}-1)/(d-1). 
\]
Let $\epsilon: \mathbb{Z}_{\geq 0}\times \mathbb{Z}_{>0}^2 \rightarrow \mathbb{Z}$ be the function defined by 
\begin{align*}
    \epsilon(0,d,k)&= \max\{k-3,\ 1\},\ \text{and}\\
    \epsilon(h,d,k)&= \max\{d-1,\ k,\ (d-1)\gamma(t(h,d,k))+2(d-1)^2\delta(t(h,d,k))-1, \\ 
    &\qquad \qquad \epsilon(h-1,d+2k,2k+1)\},\ \textrm{for every $h \geq 1$}.
\end{align*}

A set $S$ of vertices in a graph $G$ 
is a \emph{subgeodesic} in $G$ 
if there is a supergraph $G^+$ of $G$ and a geodesic $P$ in $G^+$
such that $S \subseteq V(P)$.

\begin{figure}[!htbp] 
   \centering 
   \includegraphics[scale=1]{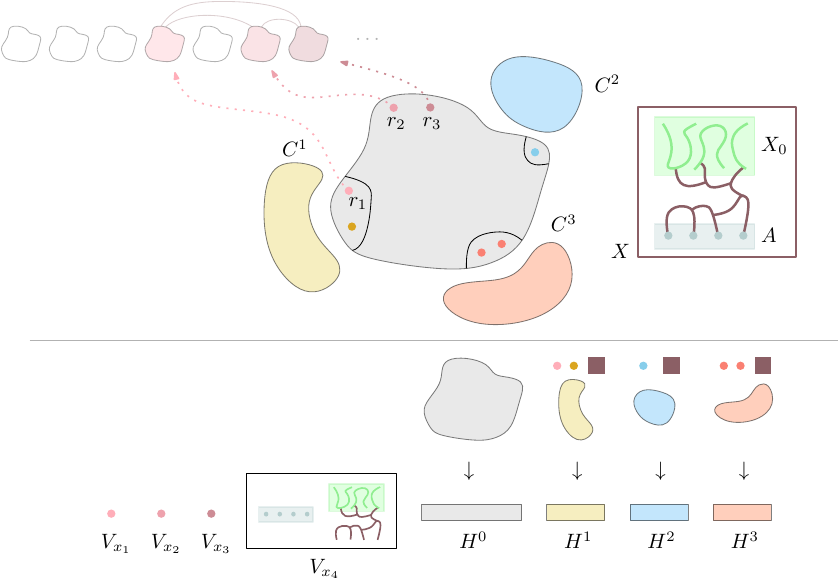} 
   \caption{
   Overview of the proof of \Cref{lemma:structure_for_wcol}.
   The vertices $r_i$ (we usually call them the interface) 
   correspond to some already processed parts, that form a clique in $H$ -- this follows the idea explained in the caption of \Cref{fig:wcol_Kt_minor_free}. 
   The first part of the proof is similar to the proof of \Cref{lemma:main_induction}. 
   That is, we set $\mathcal{F}$ to be the family of all connected subgraphs of $G-R$ containing an $R$-attached model of $K_{k+1} \oplus U_{h-1,d}$. 
   Again, one can prove that $\mathcal{F}$ does not contain $d$ pairwise disjoint elements, as otherwise, we could build a model of $K_k \oplus U_{h,d}$ in $G$ (see \Cref{fig:combining:minors}).
   By \Cref{lem:X:geodesics}, there exists a hitting set for $\mathcal{F}$, that consists of two parts, the first one -- $A$ -- is not too big, and the second one -- $X_0$ -- is a union of a small number of geodesics. We can make $X_0 \cup A$ connected by adding a few more geodesics in $G-R-A$.
   Thus we obtain a hitting set $X$ for $\mathcal{F}$ that will be a new part of the partition.
   Now, we can apply the decomposition lemma to $G-X$ (\Cref{lemma:find_all_cuts_and_the_simple_part_in_the_middle}).
   The grey fragment of the graph in the figure is $K_{2k+1}\oplus U_{h-1,d+2k}$-minor-free, thus, we can process it by induction on $h$ to obtain a graph $H^0$ and an $H^0$-partition of this fragment.
   Then we process the components $C^i$ by induction on the number of vertices, with the interface consisting of the vertices in $N^i$ (possibly containing some of the $r_i$s), and a vertex corresponding to the set $X$ contracted.
   The graph $H$ is obtained exactly as in the proof of \Cref{lemma:main_induction} (see \Cref{fig:final:clique:sums}).
   At the bottom of the figure, we depict the final ordering of the vertices of $H$.
   First, we put vertices corresponding to $r_i$s.
   The set $X$ becomes a new part in $H$, however, note that in the final ordering for the weak coloring number, first, we put the vertices in $A$, and then the vertices in $X_0$.
   Then comes the vertices of $H^0$ (with the ordering obtained by induction), and finally the vertices of $H^i$ (ordered again by induction) for every $i$.
   }
   \label{fig:wcol_general}
\end{figure}

\begin{lemma}\label{lemma:structure_for_wcol}
    For all integers $h,d,k,\ell$ 
    with $h,d\geq1$ and $k\geq \ell\geq 0$,  
    for every graph $G$ such that $K_{k}\oplus U_{h,d}$ is not a minor of $G$, for all pairwise distinct vertices $r_1,\ldots,r_\ell$  in $G$,
    there is a graph $H$ with an ordering $x_1, \dots, x_{|V(H)|}$ of $V(H)$, and an $H$-partition $(V_x \mid x \in V(H))$ of $V(G)$ such that:
    \begin{enumerateNum}
        \item $\{x_j\mid j<i \textrm{ and $x_j x_i\in E(H)$}\}$ is a clique in $H$ for all $i\in[|V(H)|]$; \label{item-p-first}\label{item-elimination-ordering}
        \item $\{x_1, \dots, x_\ell\}$ is a clique in $H$; \label{item-x1-x_l_is_a_clique}
        \item $\tw(H) \leq \tau(h,k)$;\label{item:structure_for_wcol:twH}
        \item $V_{x_j} = \{r_j\}$ for all $j \in [\ell]$;\label{item-Rj_is_a_part}
        \item\label{item-partition_into_geodesics} for each integer $i$ with $\ell+1\leq i \leq |V(H)|$, there exists a partition $(A_{x_i},B_{x_i})$ of $V_{x_i}$ such that:
        \begin{enumerate}
            \item $|A_{x_i}| \leq \epsilon(h,d,k)$, and
            \item $B_{x_i}$ is the union of at most $\epsilon(h,d,k)$ subgeodesics in $G[B_{x_i} \cup \bigcup_{j > i} V_{x_j}]$.
        \end{enumerate} \label{item-p-last}
    \end{enumerateNum}
\end{lemma}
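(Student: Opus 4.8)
The plan is to prove \Cref{lemma:structure_for_wcol} by induction on $(h,|V(G)|)$ in lexicographic order, following the architecture of the proof of \Cref{lemma:main_induction} but replacing the use of \Cref{lemma:helly_property_tree_decomposition} and \Cref{lemma:increase_X_to_have_small_interfaces} by \Cref{lem:X:geodesics}. I will strengthen the statement with the invariant that the interface vertices $x_1,\dots,x_\ell$ occur first in the ordering of $V(H)$; this is what lets the elimination-ordering property~\ref{item-elimination-ordering} survive the clique-sums. The base case $h=1$, $k=0$ is trivial: $K_0\oplus U_{1,d}$ is edgeless on $d$ vertices, so $|V(G)|\le d-1$, and one part containing all of $G$ with $A$-part $=V(G)$ and empty $B$-part works. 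Then, exactly as in \Cref{lemma:main_induction}, one handles the reductions: if $|V(G)-\{r_1,\dots,r_\ell\}|<k$ the complete partition $\{\{r_1\},\dots,\{r_\ell\},V(G)-\{r_1,\dots,r_\ell\}\}$ works; otherwise one augments $r_1,\dots,r_\ell$ to $r_1,\dots,r_k$ and assumes $\ell=k$; and if $G-R$ (with $R=\{r_1,\dots,r_k\}$) is disconnected one recurses on $G[V(C)\cup R]$ for each component $C$ of $G-R$ and glues by identifying the $k$ interface vertices, ordering the interface first and the pieces consecutively.

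The core case is when $G-R$ is connected. Let $\mathcal{F}$ be the family of connected subgraphs of $G-R$ that contain an $R$-attached model of $K_{k+1}\oplus U_{h-1,d}$. As in \Cref{lemma:main_induction}, $\mathcal{F}$ has no $d$ pairwise vertex-disjoint members, since $d$ such models together with $R$ would form a model of $K_k\oplus U_{h,d}$ in $G$ (here $U_{h,d}$ is $d$ disjoint copies of $K_1\oplus U_{h-1,d}$; see \Cref{fig:combining:minors}). Since $K_k\oplus U_{h,d}$ has $t(h,d,k)$ vertices, $G$ and hence $G-R$ is $K_{t(h,d,k)}$-minor-free, so \Cref{lem:X:geodesics} applied to $G-R$ and $\mathcal{F}$ gives $A\subseteq V(G)-R$ with $|A|\le (d-1)\gamma(t(h,d,k))$ and a union $X_0$ of at most $(d-1)^2\delta(t(h,d,k))$ geodesics of $G-R-A$ such that $V(X_0)\cup A$ meets every member of $\mathcal{F}$. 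I then enlarge $A\cup V(X_0)$ to a \emph{connected} subset $X$ of $G-R$ by repeatedly adding a shortest path of $G-R$ between two of the current connected pieces; this uses at most $(d-1)\gamma(t(h,d,k))+(d-1)^2\delta(t(h,d,k))-1$ extra paths, and deleting the vertices of $A$ from any such shortest path of $G-R$ leaves a geodesic of $G-R-A$. Hence, taking $V_z:=X$ for a new vertex $z$ placed right after the interface in the ordering, $A_z:=A$, and $B_z:=X-A$, the set $B_z$ is a union of at most $(d-1)\gamma(t(h,d,k))+2(d-1)^2\delta(t(h,d,k))-1\le\epsilon(h,d,k)$ subgeodesics of $G[B_z\cup\bigcup_{j>z}V_{x_j}]=G-R-A$, and $|A_z|\le\epsilon(h,d,k)$.

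Since $V(X_0)\cup A\subseteq V(X)$ meets every member of $\mathcal{F}$, the graph $G-X$ has no $R$-attached model of $K_{k+1}\oplus U_{h-1,d}$, so \Cref{lemma:find_all_cuts_and_the_simple_part_in_the_middle} (with $a=k+1$) applied to $G-X$ and $R$ yields $C$, the components $C^1,\dots,C^m$ of $(G-X)-C$ with $N^i=N_{G-X}(V(C^i))$ of size $\le k-1$ each carrying an $N^i$-attached model of $K_{|N^i|}$, and the graph $C^0$ (obtained from $C-R$ by completing each $N^i-R$), which is a minor of $G$, has treewidth $<t$, and is $K_{2k+1}\oplus U_{h-1,d+2k}$-minor-free. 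Process $C^0$ either by induction on $h$ (the instance $(h-1,d+2k,2k+1,0,C^0,\emptyset)$) or, when $h=1$, by \Cref{lemma:Heuvel_et_al_wcol_Kt_minor_free} with $t=2k+1\ge3$ (noting $k\ge1$ here); either way we obtain $H^0$ of treewidth $\le\tau(h-1,2k+1)$ and an elimination-ordered $H^0$-partition of $C^0$ whose parts split into $A$-parts of size $\le\epsilon(h-1,d+2k,2k+1)$ (resp.\ $\le\max\{2k-2,1\}$), which is $\le\epsilon(h,d,k)$, and $B$-parts that are unions of at most $\epsilon(h,d,k)$ subgeodesics of the appropriate tail in $C^0$. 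For each $i\in[m]$, contract the connected set $X$ to a single vertex $x_X$ and process by induction on $|V(G)|$ the minor $G^i:=(G/X)[V(C^i)\cup N^i\cup\{x_X\}]$ of $G$, with interface $\mathcal{R}^i$ the $\le|N^i|+1\le k$ distinct vertices $N^i\cup\{x_X\}$ (valid since some $r_j\notin N^i$, so $|V(G^i)|<|V(G)|$). Finally assemble $H$ exactly as in \Cref{fig:final:clique:sums}: add a clique $\{x_1,\dots,x_k,z\}$ joined completely to $H^0$, then clique-sum each $H^i$ along $\mathcal{R}^i$ (identifying its interface parts with the corresponding vertices of $H^0$, or with some $x_j$, or with $z$); order $V(H)$ as $x_1,\dots,x_k$, then $z$, then $V(H^0)$ in its order, then $V(H^1),\dots,V(H^m)$ in their orders. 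Then $\tw(H)\le\max\{k+1+\tau(h-1,2k+1),\tau(h,k)\}=\tau(h,k)$ since clique-sums do not increase treewidth, giving~\ref{item:structure_for_wcol:twH}; and~\ref{item-x1-x_l_is_a_clique} and~\ref{item-Rj_is_a_part} are immediate.

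It remains to verify~\ref{item-elimination-ordering} and~\ref{item-p-last}. For~\ref{item-elimination-ordering}: since in each $H^i$ the interface vertices come first, the back-neighbourhood in $H$ of any vertex $v$ is the image under the identifications of a back-neighbourhood of $v$ in one of $H^0,\dots,H^m$, enlarged by the clique $\{x_1,\dots,x_k,z\}$ when $v\in V(H^0)$; both pieces are cliques of $H$ (using that $\{x_1,\dots,x_k,z\}$ is joined completely to $H^0$), hence so is their union. For~\ref{item-p-last}, I will use that ``$S$ is a subgeodesic of $G$'' is equivalent to the existence of an ordering $s_1,\dots,s_q$ of $S$ with $\dist_G(s_i,s_j)\ge|i-j|$ for all $i,j$; in particular, if $S$ is a subgeodesic of a graph $G_0$ and $\dist_{G'}(u,v)\ge\dist_{G_0}(u,v)$ for all $u,v\in S$, then $S$ is a subgeodesic of $G'$. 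For a $B$-part of some $H^i$ with $i\ge1$, the tail graph in the full ordering differs from the one given by the recursion only by the addition of the vertex sets of the $C^{i'}$ with $i'>i$, which are non-adjacent to $V(C^i)$ in $G$, hence irrelevant to the relevant distances; and a subgeodesic of $G^i=G/X$ avoiding $x_X$ is a subgeodesic of $G$ since contracting the connected set $X$ only shrinks distances. For a $B$-part of $H^0$, the full tail graph adds the vertex sets of all the $C^i$ to a tail inside $C^0$; the crux of the proof is that every path through $C^i$ joining two vertices of $N^i-R$ has length at least $2$ whereas the corresponding completion edge of $C^0$ has length $1$, so (replacing such excursions by completion edges) for any two vertices of $C^0$ the distance in the enlarged tail graph is at least the distance in the corresponding tail of $C^0$; together with the equivalence above, the subgeodesics of $C^0$ produced by the recursion (or by \Cref{lemma:Heuvel_et_al_wcol_Kt_minor_free}) are subgeodesics of $G$ in the full tail. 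This completes the induction. Finally, \Cref{theorem:bound_wcol_Uhd_minor_free}, and hence \Cref{thm-wcol-main}, follows by taking $\ell=0$, ordering $V(G)$ compatibly with the ordering of $V(H)$ (each part ordered with $A_{x_i}$ before $B_{x_i}$), and combining \Cref{theorem:Grohe_et_al_wcol_bounded treewidth} with the elementary fact that a ball of radius $r$ meets any subgeodesic in at most $2r+1$ vertices.
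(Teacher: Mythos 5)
Your proposal is correct and follows essentially the same route as the paper's proof: the same lexicographic induction, the same reductions, the same family $\mathcal{F}$ hit via \Cref{lem:X:geodesics} and connectified into the part $X$, the same application of \Cref{lemma:find_all_cuts_and_the_simple_part_in_the_middle}, the same recursive treatment of $C^0$ and of each $C^i$ with $X$ contracted, and the same clique-sum assembly and ordering. The only cosmetic difference is that you verify item~(v) through the equivalent characterization of subgeodesics by $\dist(s_i,s_j)\ge|i-j|$, whereas the paper constructs explicit supergraphs ($C^{++}$) in which the relevant paths remain geodesics; these are interchangeable.
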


\begin{proof}
    We call a tuple 
    $(h,d,k,G,\set{r_1,\ldots,r_\ell})$ satisfying the premise of the lemma an \emph{instance}.
    We proceed by induction on $(h,|V(G)|)$ in lexicographic order.
    Let $R = \{r_1, \dots, r_\ell\}$.

    If $h=1$ and $k=0$, then $K_k \oplus U_{h,d}$ is the graph with $d$ vertices and no edges.
    Thus $|V(G)| \leq d-1$, and $\{V(G)\}$ is a $K_1$-partition of $G$ of width at most $d-1$.
    Then take $\sigma = (x)$ where $x$ is the vertex of $K_1$.
    Items~\ref{item-elimination-ordering}, \ref{item-x1-x_l_is_a_clique} are clear. \Cref{item:structure_for_wcol:twH} holds as $\tau(1,0) = 0$.
    \Cref{item-Rj_is_a_part} holds vacuously.
    Finally, \cref{item-partition_into_geodesics} holds by taking $A_x = V(G)$
    and $B_x = \emptyset$, since $d-1 \leq \epsilon(1,d,0)$.
    Now we assume $(h,k) \neq (1,0)$.

    If $|V(G)-R| \leq k$, then 
    take $H = K_{\ell+1}$ with vertex set $\{x_1, \dots, x_{\ell+1}\}$. 
    Let $V_{x_j} = \{r_j\}$ for every $j \in [\ell]$ and
    let $V_{x_{\ell+1}} = V(G)-R$.
    Note that $(V_x \mid x \in \{x_1, \dots, x_{\ell+1}\})$ is a $K_{\ell+1}$-partition of $G$.
    Let $A_{x_{\ell+1}} = V(G)-R$ and $B_{x_{\ell+1}} = \emptyset$.
    In particular, $|A_{x_{\ell +1}}| \leq \epsi(h,d,k)$. 
    It is straightforward to check that \ref{item-p-first}-\ref{item-p-last} hold.
    Now, if $|V(G)-R| > k$ and $\ell<k$, then set $r_{\ell+1}, \dots, r_k$ to be distinct vertices of $G - R$.
    Therefore, from now on assume 
    $\ell=k$ and $V(G)- R$ is non-empty.

    Suppose that $G-R$ is disconnected.
    Let $C^1, \dots, C^m$ be the components of $G-R$.
    For every $i \in[m]$ 
    Apply induction to the instance 
    $(h,d,k,G[V(C^i) \cup R],\set{r_1,\ldots,r_{k}})$ 
    and obtain $H^i$ with $V(H^i)=\{x^i_1, \dots, x^i_{|V(H^i)|}\}$ and an $H^i$-partition $(V^i_x \mid x \in V(H^i))$ of $G[V(C^i) \cup R]$ satisfying \ref{item-p-first}-\ref{item-p-last}. 
    Let $H$ be the graph obtained from the disjoint union of $H^{1}, \dots, ,H^{m}$ by identifying the vertices in $\{x^{i}_j\}_{i\in[m]}$ into a single vertex $x_j$, for each $j \in [k]$.
    Then order the vertices of $H$ by
    \[
    \sigma= (x_1, \dots, x_k, x^{1}_{k+1}, \dots, x^{1}_{|V(H^{1})|}, \dots,x^m_{k+1}, \dots, x^{m}_{|V(H^{m})|}).
    \]
    Finally, set $V_{x_j} = \{r_j\}$ for each $j \in [k]$, and $V_x = V^{i}_x$ for every $x \in V(H^{i}) \setminus \{x^{i}_1,\dots,x^{i}_k\}$ for each $i\in[m]$.
    Items~\ref{item-x1-x_l_is_a_clique} and \ref{item-Rj_is_a_part} follow by construction of $H$ and $(V_x \mid x\in V(H))$. 
    In order to prove item~\ref{item-elimination-ordering}, consider $x \in V(H)$ and 
    let $N$ be the neighbors of $x$ in $H$ that are smaller than $x$ in $\sigma$. 
    If $x \in\{x_1,\dots, x_k\}$, then clearly $N$ is a clique in $H$. 
    If $x \in V(H^i)$ for some $i \in[m]$, let $Y = N \cap \{x_1, \dots, x_k\}$ and $Z = N - Y$.
    Observe that $Z \subseteq V(H^i) \setminus \{x^i_1, \dots, x^i_k\}$.
    Then by induction $\{x^i_j \mid j\in[k], x_j \in Y\} \cup Z \subseteq V(H^i)$ is a clique in $H^i$ and so $N$ is a clique in $H$. 
    This proves item~\ref{item-elimination-ordering}.
    Note that $\tw(H)=\max_{i\in[m]}\{\tw(H^i)\}\leq \tau(h,k)$ which proves item~\ref{item:structure_for_wcol:twH}. 
    In order to prove item~\ref{item-partition_into_geodesics}, consider 
    $x^i_a$ for some $i \in [m]$ and $a \in [|V(H^i)|]$.
    Then by induction there exists a partition $A_{x^i_a},B_{x^i_a}$ of $V^i_{x^i_a}$ such that $|A_{x^i_a}| \leq \epsilon(h,d,k)$ and $B_{x^i_a}$ is the union of at most $\epsilon(h,d,k)$ subgeodesics in $G\left[B_{x^i_a} \cup \bigcup_{b>a} V^i_{x^i_b}\right]$.
    But since components of 
    $G\left[B_{x^i_a} \cup \bigcup_{b>a} V^i_{x^i_b}\right]$ are components of 
    $G\left[B_{x^i_a} \cup \bigcup_{y>_\sigma x^i_a} V_{y}\right]$, we deduce that $B_{x^i_a}$ is the union of at most $\epsilon(h,d,k)$ subgeodesics in 
    $G\left[B_{x^i_a} \cup \bigcup_{y>_\sigma x^i_a} V_{y}\right]$.
    This proves item~\ref{item-partition_into_geodesics}.

    Now assume that $G-R$ is connected.

    Let $\mathcal{F}$ be the family of all connected subgraphs of $G - R$ containing an $R$-attached model of $K_{k+1}\oplus U_{h-1,d}$.
    If $\mathcal{F}$ contains $d$ pairwise vertex-disjoint subgraphs, then 
    there exist $d$ vertex-disjoint $R$-attached models 
    $\cgM^i = (M^i_x \mid x \in V(K_{k+1} \oplus U_{h-1,d}))$ in $G$ for each $i \in [d]$.
    Denote by $v_1,\dots, v_{k+1}$ the vertices of $K_{k+1}$ in $K_{k+1}\oplus U_{h-1,d}$ and since these vertices are twins in $K_{k+1} \oplus U_{h-1,d}$, we can assume that $M^{i}_{v_j}$ contains a neighbor of $r_j$, for all $i\in[d]$ and $j\in[k]$.
    For each $j \in [k]$, let $M_j = \set{r_j} \cup\bigcup_{i\in[d]}M^i_{v_j}$.
    Note that for every $i \in [d]$, $\cgN^i = (M_x^i \mid x \in V(K_{k+1} \oplus U_{h-1,d}) - \{v_1,\dots,v_k\})$ is a model of $K_1 \oplus U_{h-1,d}$ in $G$.
    Moreover, for every $j \in [k]$, $i \in [d]$, and $M \in \cgN^i$, $M_j$ is adjacent to $M$.
    Therefore, $\cgN^1,\dots,\cgN^d$ together with $M_1,\ldots,M_k$ constitute a model of $K_{k}\oplus U_{h,d}$ in $G$, a contradiction.
    Hence, there are no $d$ pairwise disjoint members in $\mathcal{F}$.

    Let $t = |V(K_{k} \oplus U_{h,d})| = t(h,d,k)$. Note that $G$ is $K_t$-minor-free.
    By \Cref{lem:X:geodesics}, there is a set $A$ of at most $(d-1)\gamma(t)$ vertices in $G- R$, and a set $X_0$ which is the union of the vertex sets of at most $(d-1)^2\delta(t)$ geodesics in $G - R - A$,
    such that $A \cup X_0$ intersects every member of $\mathcal{F}$.
    If $A \cup X_0 = \emptyset$, then take $A = \emptyset$ and $X_0$ an arbitrary singleton included in $V(G)-R$.
    Since $G-R$ is connected, 
    we can add to $A\cup X_0$  at most $|A|+(d-1)^2\delta(t)-1$ geodesics in $G-R$ to obtain a set 
    $X$ such that $G[X]$ is connected.
    Let $B = X \setminus A$. Note that $B$ is the union of at most $(d-1)\gamma(t) + 2(d-1)^2\delta(t) - 1 \leq \epsilon(h,d,k)$ subgeodesics in $G - R - A$.

    By construction, $G[X]$ is connected and $G-X$ does not contain an $R$-attached model of $K_{k+1}\oplus U_{h-1,d}$.
    By \Cref{lemma:find_all_cuts_and_the_simple_part_in_the_middle} applied for $a=k+1$, we obtain an induced subgraph $C$ of $G-X$ with the following properties.
    Let $C^1,\ldots C^m$ be the connected components of $G-X-C$, let $N^i=N_{G-X}(V(C^i))$ for every $i\in [m]$, and let $C^0$ be the graph obtained from $C-R$ by adding all missing edges between vertices of $N^i-R$ for each $i\in[m]$. 
    Then 
    \begin{enumerate}
    \item $R \subseteq V(C)$,
    \item $|N^i|\leq k-1$ for each $i \in [m]$,
    \item $C^0$ is $K_{2k+1}\oplus U_{h-1,d+2k}$-minor-free,
    \item $C^0$ is a minor of $G-X-R$.
    \end{enumerate}

    If $h=1$, then $K_{2k+1}\oplus U_{h-1,d+2k}=K_{2k+1}$. 
    Moreover $k\geq 1$ since $(h,k) \neq (1,0)$, and so
    $2k+1 \geq 3$.
    Thus we can apply \Cref{lemma:Heuvel_et_al_wcol_Kt_minor_free} to $C^0$, which is $K_{2k+1}$-minor-free, and obtain a graph $H^0$ with $\tw(H^0)\leq 2k-1 =\tau(0,2k+1)$ and an $H^0$-partition $(V^0_x \mid x \in V(H^0))$ with an ordering $x_{0,1}, \dots, x_{0,|V(H^0)|}$ of $V(H^0)$ such that for every $p \in [|V(H^0)|]$, $V^0_{x_{0,p}}$ is the union of at most $\max\{2k-2,1\} \leq \epsilon(0,d+2k,2k+1)$ geodesics in $C^0\left[V^0_{x_{0,p}} \cup \dots \cup  V^0_{x_{0,|V(H^0)|}}\right]$.
    Then set $A^0_{x_{0,i}} = \emptyset$ and $B^0_{x_{0,i}}=V^0_{x_{0,i}}$ for every $i \in [|V(H^0)|]$.
    If $h>1$, then apply induction to the instance $(h-1,d+2k,2k+1,C^0,\emptyset)$.

    In both cases, we obtain a graph $H^0$ with $\tw(H^0)\leq\tau(h-1,2k+1)$ and an $H^0$-partition $(V^0_x \mid x \in V(H^0))$ of $C^0$ with an ordering $\sigma^0 = (x_{0,1}, \dots, x_{0,|V(H^0)|})$ of $V(H^0)$ such that for every $p \in [|V(H^0)|]$, $V^0_{x_{0,p}}$ has a partition $(A^0_{x_{0,p}},B^0_{x_{0,p}})$ such that $|A^0_{x_{0,p}}| \leq \epsilon(h-1,d+2k,2k+1) \leq \epsilon(h,d,k)$ and $B^0_{x_{0,p}}$ is the union of at most $\epsilon(h-1,d+2k,2k+1)\leq \epsilon(h,d,k)$ subgeodesics in $C^0\left[B^0_{x_{0,p}} \cup \bigcup_{q>p}V^0_{x_{0,q}}\right]$. 
    For every $i \in [m]$, the graph $N^i-R$ is a clique in $C^0$. Hence, the parts containing vertices in $N^i-R$ form a clique in $H^0$.

    Fix some $i \in [m]$.
    Let $G^i$ be the graph obtained from $G[V(C^i) \cup N^i \cup X]$
    by contracting $X$ into a single vertex $z^i$.
    Note that $G^i$ is a minor of $G$ and therefore 
    $G^i$ has no model of $K_{k}\oplus U_{h,d}$.
    Since $|N^i|\leq k-1$, there exists $j\in[k]$ such that $r_j\not\in N^i$ and therefore $r_j\not\in V(G^i)$. 
    Thus, $|V(G^i)|<|V(G)|$. 
    Let $R^i = N^i \cup \{z^i\}$, so $|R^i|\leq k-1+1 = k$.
    Now, apply induction to the instance $(h,d,k,G^i,R^i)$.
    It follows that there is a graph $H^{i}$ with $\tw(H^{i})\leq\tau(h,k)$ and 
    an $H^{i}$-partition $(V^{i}_x \mid x \in V(H^{i}))$ of $G^i$ 
    and an ordering $\sigma_i = (x_{i,p})_{p \in [|V(H^i)|]}$ of $V(H^i)$
    such that for each $j\in [|R^i|]$ the set $V^i_{x_{i,j}}$ is a singleton, 
    $\bigcup_{j\in[|R^i|]} V^i_{x_{i,j}}=R^i$, the set 
    $\{x_{i,1}, \dots,x_{i,|R^i|}\}$ is a clique in $H^{i}$, and for every integer $p$ with 
    $|R^i| < p \leq |V(H^i)|$, 
    the set $V^i_{x_{i,p}}$ has a partition $(A^i_{x_{i,p}},B^i_{x_{i,p}})$ such that $|A^i_{x_{i,p}}| \leq \epsilon(h,d,k)$ and $B^i_{x_{i,p}}$ is the union of at most $\epsilon(h,d,k)$ subgeodesics in $G^i\left[B^i_{x_{i,p}} \cup \bigcup_{q>p} V^i_{x_{i,q}}\right]$.

    Finally, define the graph $H$ as follows. 
    Start with the disjoint union of $H^0$ and $H^1, \dots, H_m$.
    Add a clique $\{x_1, \dots, x_{k},z\}$ of $k+1$ new vertices, each adjacent to every vertex of $H^0$.
    For every $i \in [m]$, 
    let $f_i$ be a mapping of $\{x_{i,1},\dots, x_{i,|R^i|}\}$ to $V(H^0) \cup\{x_1, \dots, x_k,z\}$ defined as follows:
\[
f_i(x_{i,j}) = \begin{cases}
w& \textrm{if $w \in V(H^0)$ and $V^{i}_{x_{i,j}} \subseteq V^0_w$,}\\
x_{j'}& \textrm{if $j' \in [k]$ and $V^{i}_{x_{i,j}} = \{r_{j'}\}$,}\\
z& \textrm{if $V^{i}_{x_{i,j}} =\set{z^i}$,}
\end{cases}
\]
    for each $j \in[|R^i|]$.
    Now, identify $x_{i,j}$ with $f_i(x_{i,j})$ for every $i \in [m]$ and every $j \in [|R_i|]$.
    This identification step can be seen as a result of the sequence of clique-sums between $\{x_1, \dots, x_k,z\}\oplus H^0$ and the graphs $H^{i}$ according to $f_i$ for $i\in[m]$.
    This completes the definition of $H$. 
    
    Note that 
    \begin{align*}
    \tw(H) &\leq \max\left\{{\tw(\{x_1, \dots, x_k,z\}\oplus H^0)},\max_{i\in[m]}\tw(H^i)\right\}\\
    &\leq \max\left\{k+1+\tau(h-1,2k+1), \tau(h,k)\right\}\\
    &=\tau(h,k).
    \end{align*}
    
    Now define an $H$-partition $(V_x \mid x \in V(H))$ of $G$, where for each $x \in V(H)$,
\[
    V_x = \begin{cases}
       \set{r_j} &\textrm{if $x=x_j$ for $j\in[k]$,} \\
       X         &\textrm{if $x=z$,} \\
       V_x^0     &\textrm{if $x \in V(H^0)$,} \\
       V_x^{i}   &\textrm{if $x \in V(H^{i}) - \{x_{i,1}, \dots, x_{i,|R^i|}\}$}.
    \end{cases}
    \]

    Moreover, order the vertices of $H$ by
    \[
    \sigma=(x_1, \dots, x_k, z, x_{0,1}, \dots, x_{0,|V(H^0)|},
    \dots, x_{m,|R^m|+1},\dots, x_{m,|V(H^m)|}).
    \]

    In order to prove item~\ref{item-elimination-ordering}, consider a vertex $x \in V(H)$, and let $N = \{y \in V(H) \mid y <_\sigma x, xy \in E(H)\}$.
    If $x \in \{x_1, \dots, x_k,z\}$, then $N \subseteq \{x_1, \dots, x_k,z\}$
    and so $N$ is a clique in $H$.
    If $x \in V(H^0)$, then $N \setminus \{x_1, \dots, x_k,z\}$ is a clique in $H^0$, thus $N$ is a clique in $\{x_1, \dots, x_k,z\} \oplus H^0$, and so in $H$.
    If $x \in V(H^i)-\{x_{i,1}, \dots, x_{1,|R^i|}\}$ for some $i \in [m]$,
    let $N' = N \cap V(H^0)$ and $N'' = N \setminus N'$.
    Then $f_i^{-1}(N') \cup N'' = \{y \in V(H^i) \mid y <_{\sigma_i} x, xy \in E(H^i)\}$ is a clique in $H^i$,
    and so $N$ is a clique in $H$.
    This proves item~\ref{item-elimination-ordering}.

    \Cref{item-x1-x_l_is_a_clique} follows from the definition of $H$.
    As mentioned before $\tw(H)\leq \tau(h,k)$ so item \ref{item:structure_for_wcol:twH} holds.
    \Cref{item-Rj_is_a_part} follows from the definition of $(V_x \mid x \in V(H))$.
    For~\cref{item-partition_into_geodesics}, for each $x \in V(H) - \{x_1, \dots, x_k\}$, define
    \[
    A_x,B_x = \begin{cases}
    A,B&\textrm{if $x=z$},\\
    A^0_{x},B^0_{x}&\textrm{if $x\in V(H^0)$,}\\
    A^i_{x},B^i_{x}&\textrm{if $x\in V(H^i)-\set{x_{i,1},\ldots,x_{i,|R^i|}}$ for $i\in[m]$}.
    \end{cases}
    \]
    Consider now some $x \in V(H)-\{x_1, \dots, x_k\}$.
    First observe that $|A_x| \leq \epsilon(h,d,k)$.
    It remains to show that $B_x$ is the union of at most $\epsilon(h,d,k)$ subgeodesics in $G\left[B_x \cup \bigcup_{y >_\sigma x} V_y\right]$.
    If $x = z$, this follows from the definition of $A$ and $B$.
    If $x \in V(H^0)$, then there is a supergraph $C^+$ of $C^0$ such that $B_x=B_x^0$ is in the union of the vertex sets of at most $\epsilon(h,d,k)$ geodesics in $C^+[B^0_x \cup \bigcup_{y >_{\sigma^0}x} V^0_y]$.
    Let $C^{++}$ be obtained from the disjoint union of $C^+$ and $C^1, \dots, C^m$ by adding every edge between $V(C^i)$ and $V(C^0)$ that is in $G$, for each $i\in[m]$.
    Since $N^i \cap V(C^0)$ is a clique in $C^0$ for every $i\in[m]$, for every two vertices $u$, $v$ in $C^+$,  $\dist_{C^+}(u,v) = \dist_{C^{++}}(u,v)$.
    Hence $B_x$ is the union of the vertex sets of at most $\epsilon(h,d,k)$ geodesic in $C^{++}$, which is a supergraph of $G\left[B_x \cup \bigcup_{y >_\sigma X} V_y\right]$.
    This shows that $B_x$ is the union of at most $\epsilon(h,d,k)$ subgeodesics in $G\left[B_x \cup \bigcup_{y >_\sigma X} V_y\right]$, as desired.
    Finally, if $x \in V(H^i) \setminus\{x_{i,1}, \dots, x_{i,|R^i|}\}$, then
    $B_x=B^i_x$ is the union of at most $\epsilon(h,d,k)$ subgeodesics in $C^i\left[B^i_x \cup \bigcup_{y>_{\sigma^i} x} V^i_y\right]$.
    Since components of $C^i\left[B^i_x \cup \bigcup_{y>_{\sigma^i} x} V^i_y\right]$ are components of $G\left[B_x \cup \bigcup_{y>_{\sigma} x} V_y\right]$, we deduce that $B_x$ is the union of at most $\epsilon(h,d,k)$ geodesics in $G\left[B_x \cup \bigcup_{y>_{\sigma} x} V_y\right]$. This proves \cref{item-partition_into_geodesics} and concludes the proof.
\end{proof}

\begin{lemma}
\label{lemma:intersection_ball_with_geodesic}
    Let $G$ be a graph and let $r$ be a non-negative integer. 
    For every subgeodesic $S$ in $G$ and for every vertex $v \in V(G)$,
    \[
    |N^r_G[v] \cap S| \leq 2r+1.
    \]
\end{lemma}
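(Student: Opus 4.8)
The plan is to unwind the definition of a subgeodesic and reduce the statement to the easy fact that a ball of radius $r$ around a vertex meets any shortest path in at most $2r+1$ vertices. First I would fix a supergraph $G^+$ of $G$ and a geodesic $P = p_0 p_1 \cdots p_\ell$ in $G^+$ with $S \subseteq V(P)$, which exists by definition of a subgeodesic. Since every path of $G$ is also a path of $G^+$, distances can only shrink when we pass to $G^+$; hence $N^r_G[v] \subseteq N^r_{G^+}[v]$, and therefore
\[
|N^r_G[v] \cap S| \leq |N^r_G[v] \cap V(P)| \leq |N^r_{G^+}[v] \cap V(P)|.
\]
So it suffices to bound the right-hand side by $2r+1$.

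The key step is then the observation that because $P$ is a geodesic in $G^+$, for all indices $i,j$ we have $\dist_{G^+}(p_i,p_j) = |i-j|$ (a subpath of a shortest path is shortest). If $p_i, p_j \in N^r_{G^+}[v]$, then by the triangle inequality
\[
|i-j| = \dist_{G^+}(p_i,p_j) \leq \dist_{G^+}(p_i,v) + \dist_{G^+}(v,p_j) \leq r + r = 2r.
\]
Thus the set of indices $i$ with $p_i \in N^r_{G^+}[v]$ is contained in an interval of length $2r$ in $\{0,1,\dots,\ell\}$, so it has at most $2r+1$ elements; since the vertices of a path are distinct, $|N^r_{G^+}[v] \cap V(P)| \leq 2r+1$, which finishes the argument.

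There is essentially no obstacle here; the only things to be mindful of are purely bookkeeping: that the inequality $\dist_{G^+} \le \dist_G$ is the right direction to get the set inclusion of balls we need, that $P$ being a geodesic gives exact distances between its own vertices (not merely an upper bound), and the degenerate cases $r=0$ (where the bound reads $|N^0_G[v]\cap S|\le 1$, trivially true) and $S=\emptyset$ or $\ell=0$, all of which are covered automatically by the interval argument.
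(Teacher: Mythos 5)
Your proof is correct and follows essentially the same route as the paper's: pass to the supergraph $G^+$ using the inclusion $N^r_G[v]\subseteq N^r_{G^+}[v]$, then use that $P$ is a geodesic in $G^+$ together with the triangle inequality to bound the spread of the indices hit by the ball. The paper phrases this as a proof by contradiction via the two extreme vertices of $N^r_{G^+}[v]\cap V(P)$, but the content is identical.
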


\begin{proof}
    Let $S$ be a subgeodesic of $G$. 
    Let $G^+$ be a supergraph of $G$ and let $P$ be a geodesic with endpoints $s,t$ in $G^+$ such that $S \subseteq V(P)$.
    Let $v \in V(G)$.
    Suppose for contradiction that $2r+2\leq |N^r_G[v] \cap S|$.
    However, $N^r_{G}[v] \cap S \subseteq N^r_{G^+}[v] \cap S \subseteq N^r_{G^+}[v] \cap V(P)$.
    Let $x$ and $y$ be the vertices in $N^r_{G^+}[v] \cap V(P)$ closest to $s$ and $t$, respectively. 
    Since $N^r_{G^+}[v] \cap V(P) \subseteq xPy$, and 
    $|N^r_{G^+}[v] \cap V(P)|\geq 2r+2$, 
    we have $\dist_P(x,y) \geq 2r+1$.
    However, since $P$ is a geodesic in $G^+$, we have $2r+1\leq \dist_P(x,y) = \dist_{G^+}(x,y) \leq \dist_{G^+}(x,v) + \dist_{G^+}(v,y) \leq 2r$,
    a contradiction.
\end{proof}


\begin{proof}[Proof of \Cref{theorem:bound_wcol_Uhd_minor_free}]
    Let $h,d,r\geq 1$ and let $G$ be a $U_{h,d}$-minor-free graph.
    We will show that $\wcol(G) \leq 2\epsilon(h,d,0) \cdot (2r+1)\binom{\tau(h,0)+r}{\tau(h,0)} $, which implies the theorem.
    By \Cref{lemma:structure_for_wcol} applied to $G$ with $\ell=k=0$, 
    there is a graph $H$ with an ordering $\sigma_H=(x_1, \dots, x_{|V(H)|})$ of $V(H)$, and an $H$-partition $(V_x \mid x \in V(H))$ of $V(G)$ 
    such that \ref{item-p-first}-\ref{item-p-last} hold.
    Let $\sigma$ be a total order on $V(G)$ such that for all $i,j \in [|V(H)|]$ and $u,v \in V(G)$:
    \begin{enumerate}
        \item if $i < j$ and $u \in V_{x_i}, v \in V_{x_j}$, then $u<_\sigma v$;
        \item if $u \in A_{x_i}, v \in B_{x_i}$, then $u<_\sigma v$.
    \end{enumerate}

    Let $u \in V(G)$.
    Consider a vertex $v \in \WReach_r[G,\sigma,u]$.
    Let $i,j \in [|V(H)|]$ be such that $u \in V_{x_j}, v \in V_{x_i}$.
    Then $x_i \in \WReach_r[H,\sigma_H,x_j]$.
    In particular $i\leq j$.
    By \Cref{theorem:Grohe_et_al_wcol_bounded treewidth} 
    \[
    |\WReach_r[H,\sigma_H,x_j]| \leq \binom{r+\tau(h,0)}{\tau(h,0)}.
    \]

    Moreover $V_{x_i} = A_{x_i} \cup B_{x_i}$ where $|A_{x_i}| \leq \epsilon(h,d,0)$ and $B_{x_i}$ is the union of the vertex sets of at most $\epsilon(h,d,0)$ subgeodesics in $G[B_{x_i} \cup V_{x_{i+1}} \cup \dots \cup V_{x_{|V(H)|}}]$.
    Since vertices $r$-weakly reachable from $u$ in $B_{x_i}$ are in $N^r_{G[B_{x_i} \cup V_{x_{i+1}} \cup \dots \cup V_{x_{|V(H)|}}]}[u]$, we deduce by \Cref{lemma:intersection_ball_with_geodesic} that $|\WReach_r[G,\sigma,u] \cap B_{x_i}| \leq  \epsilon(h,d,0)\cdot(2r+1)$.
    Hence
    \begin{align*}
    |\WReach_r[G,\sigma,u] \cap V_{x_i}| &=
    |\WReach_r[G,\sigma,u] \cap A_{x_i}| + |\WReach_r[G,\sigma,u] \cap B_{x_i}| \\
    &\leq \epsilon(h,d,0) + (2r+1) \cdot \epsilon(h,d,0) \\
    &\leq (2r+1) \cdot 2\epsilon(h,d,0).
    \end{align*}
It follows that
    \[
    |\WReach_r[G,\sigma,u]| \leq \binom{r+\tau(h,0)}{\tau(h,0)} \cdot (2r+1) \ 2\epsilon(h,d,0).
    \]
    This proves the theorem.
\end{proof}

\section{Proof of Lemma~\ref{lem:X:geodesics}}
\label{sec:graph-minor-struggles}
This section proves the following lemma.

\geodesics*

In short, \Cref{lem:X:geodesics} follows from a result by Pilipczuk and Siebertz~\cite{PS19}, see~\Cref{theorem:geodesic_partition_bounded_genus}, which we lift in order to accommodate vortical decompositions and clique-sums.

First, we recall the Graph Minor Structure Theorem of Robertson and Seymour~\cite{GMXVI}, which says that every graph in a proper minor-closed class can be constructed using four ingredients: graphs on surfaces, vortices, apex vertices, and tree-decompositions. 

The \emph{Euler genus} of a surface with~$h$ handles and~$c$ cross-caps is~${2h+c}$. The \emph{Euler genus} of a graph~$G$ is the minimum integer $g\geq 0$ such that there is an embedding of~$G$ in a surface of Euler genus~$g$; see \cite{MoharThom} for more about graph embeddings in surfaces. 

Let $G$ be a graph and let $\Omega$ be a cyclic permutation of a subset of $V(G)$. 
An \emph{interval of $\Omega$} is a sequence $(v_1, \dots, v_\ell)$ of vertices of $G$ such that $v_{i+1}$ is the successor of $v_i$ on $\Omega$ for every $i \in [\ell-1]$.
A \emph{vortical decomposition} of $G$ is a pair 
$(\Omega, (B_x\subseteq V(G) \mid x\in V(\Omega)))$ such that: 
\begin{enumerateNum}
\setcounter{enumi}{-1}
\item $x \in B_x$, for every $x\in V(\Omega)$, 
\item for each edge $uv\in E(G)$ there is $x\in\Omega$ with 
$u,v\in B_x$, and
\item for each vertex $v\in V(G)$ the set of vertices $x\in V(\Omega)$ with 
$v\in B_x$  induces a non-empty interval of $\Omega$.
\end{enumerateNum}
The \emph{width} of a vortical decomposition $(\Omega,(B_x\subseteq V(G) \mid x\in V(\Omega)))$ is defined to be $\max_{x\in V(\Omega)} |B_x|$.

For any integers $g,p,k,a\geq0$, a graph $G$ is \emph{$(g,p,k,a)$-almost-embeddable} if for some set $A\subseteq V(G)$ with $|A|\leq a$, there are graphs $G_0,G_1,\dots,G_s$ for some $0 \leq s \leq p$, cyclic permutations $\Omega_1,\ldots,\Omega_s$ of pairwise disjoint subsets of $V(G)$, and a surface $\Sigma$ of Euler genus at most $g$ such that:
\begin{enumerate}
\item $G-A = G_{0} \cup G_{1} \cup \cdots \cup G_s$;
\item $G_{1}, \dots, G_s$ are pairwise vertex-disjoint and non-empty;
\item for each $i\in[s]$, there is a vortical decomposition $(\Omega_i,(B^i_x\mid x\in V(\Omega_i)))$ of $(G_i,\Omega_i)$ of width at most $k$;
\item $G_{0}$ is embedded in $\Sigma$;
\item there are $s$ pairwise disjoint closed discs in $\Sigma$ whose interiors $\Delta_1,\ldots,\Delta_s$ are disjoint from the embedding of $G_0$, and such that the boundary of $\Delta_i$ meets the embedding of $G_0$ exactly in vertices of $\Omega_i$, and 
the cyclic ordering of $\Omega_i$ is compatible with the ordering of the vertices around the boundary of $D_i$, for each $i\in[s]$. 
\end{enumerate}
The vertices in $A$ are called \emph{apex} vertices. They can be adjacent to any vertex in $G$. 
For an integer $m\geq 0$, a graph is \emph{$m$-almost-embeddable} if it is $(m,m,m,m)$-almost-embeddable.

Let $G$ be a graph, let $\cgB=(T,(B_x\mid x\in V(T)))$ be a tree-decomposition of $G$. 
For $x \in V(T)$, the \emph{torso} of $B_x$, denoted by $\torso(G,\mathcal{B},x)$, is the graph obtained from $G[B_x]$ by adding edges so that $B_x\cap B_y$ is a clique for each neighbour $y$ of $x$ in $T$.

We now state the Graph Minor Structure Theorem, which is the cornerstone of structural graph theory. 

\begin{theorem}[\cite{GMXVI}]\label{thm:robertson-seymour-decomposition}
There exists a function $\alpha$ such that for every positive integer $t$, for every $K_t$-minor-free graph $G$, there exists a tree-decomposition $\mathcal{B}=(T,(B_x \mid x \in V(T)))$ of $G$ such that 
$\torso(G,\mathcal{B},x)$ is $\alpha(t)$-almost-embeddable, for every $x \in V(T)$.
\end{theorem}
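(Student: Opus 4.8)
The only route I am aware of is the argument of Robertson and Seymour spanning the later \emph{Graph Minors} papers, and I would follow its architecture, with ``bounded'' throughout meaning bounded by a function of $t$. The plan is to organise everything around \emph{tangles}: a tangle of order $\theta$ in $G$ is a consistent orientation of all separations of $G$ of order less than $\theta$ towards a highly connected region. First I would invoke the tree-decomposition-over-tangles machinery to obtain a tree-decomposition $\cgB=(T,(B_x\mid x\in V(T)))$ of $G$ of bounded adhesion that ``distinguishes'' all maximal tangles of order $\theta=\theta(t)$; each such torso either carries a unique such tangle, or it has branch-width less than $\theta$, hence bounded treewidth, and can then be refined further into a tree-decomposition all of whose torsos have bounded size (hence are trivially almost-embeddable with apex set equal to the whole vertex set). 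It therefore suffices to fix one torso $H=\torso(G,\cgB,x)$ together with the tangle $\cgT$ it carries and to show $H$ is $\alpha(t)$-almost-embeddable.

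Fix such an $H$ and $\cgT$. If $\tw(H)$ is bounded in terms of $t$, then $H$ is again handled by refining into bounded-size pieces. Otherwise, by the Grid-Minor Theorem (in the form that large treewidth forces a large wall as a topological minor, using that a wall is subcubic), $H$ contains a large wall $W$. Since $H$ is $K_t$-minor-free, I would then apply the \emph{Flat Wall Theorem}: there is a set $A_0\subseteq V(H)$ with $|A_0|$ bounded and a subwall $W'$ of $W$, still large, that is \emph{flat} in $H-A_0$ --- there is a separation of bounded order isolating a planar piece containing $W'$, drawn in a disc, with the rest of $H$ attached only along the boundary cycle of $W'$.

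The technical heart is then to grow a surface embedding out of this flat wall. Starting from $W'$, one repeatedly uses that $\cgT$ has large order (so it ``points into'' the flat region) together with disjoint-paths/Menger-type arguments to show that, after removing a bounded apex set $A\supseteq A_0$, almost all of $H$ embeds in a surface of bounded Euler genus compatibly with $\cgT$, the exceptions being confined to a bounded number of vortices of bounded width sitting in discs along the embedded part. The two crucial sub-steps are: a society/linkage analysis showing that the attachments of $H-W'$ to the boundary of $W'$ cluster into boundedly many intervals --- otherwise the wall together with these attachments would realise a $K_t$-minor; and a surgery argument (carried out in the series via respectful and painted walls) that iteratively enlarges the embedded part while keeping the apex count, the vortex count, and the vortex width all bounded in terms of $t$. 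Assembling, $H$ is $(g,p,k,a)$-almost-embeddable with $g,p,k,a$ all bounded; taking $\alpha(t)$ to be the maximum of the bounds accumulated gives the conclusion.

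The main obstacle is exactly this last step: the Flat Wall Theorem and the vortex/surface-surgery analysis built on it. Both the proof that a large wall in a $K_t$-minor-free graph contains a large \emph{flat} subwall, and the proof that a flat wall forces almost all of the surrounding ($\cgT$-controlled) graph into a bounded-genus embedding with boundedly many bounded-width vortices, are delicate: they require careful bookkeeping of how bounded-order separations cut through walls and linkages, and an inductive reduction on Euler genus with controlled error terms. This is the content of several papers of the \emph{Graph Minors} series and is where essentially all of the difficulty lies; by contrast, the tangle-tree-decomposition reduction of the first paragraph and the bounded-treewidth cases are routine. For these reasons the theorem is used here as a black box via the cited reference.
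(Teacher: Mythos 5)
The paper does not prove this statement at all: it is imported verbatim as the Graph Minor Structure Theorem of Robertson and Seymour, cited from Graph Minors XVI, which is exactly the conclusion you reach by treating it as a black box. Your outline of the underlying argument (tangle-based tree-decomposition, wall from large treewidth, Flat Wall Theorem, and the vortex/surface surgery) is a fair high-level summary of the known proof, so your proposal is consistent with the paper's approach and nothing further is needed here.
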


The following result of Pilipczuk and Siebertz~\cite{PS19} is the starting point of our proof of~\Cref{lem:X:geodesics}.

\begin{theorem}[Theorem~18 in~\cite{PS19}]
\label{theorem:geodesic_partition_bounded_genus}
There exists a function $\zeta$ such that for every graph $G$ of Euler genus at most $g$, there is a partition $\mathcal{P}$ of $G$ into geodesics in $G$ such that $\tw(G/\mathcal{P}) < \zeta(g)$.
\end{theorem}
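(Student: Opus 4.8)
I will prove the statement in the equivalent, more robust form: \emph{$V(G)$ can be partitioned into vertical paths of a single BFS tree of $G$ with bounded-treewidth quotient}, and induct on the Euler genus $g$. It suffices to treat connected $G$ (partition each component and take the disjoint union of the component quotients). Fix a BFS tree $T$ of $G$ rooted at $r$. If $P$ is the $T$-path from a vertex $u$ to a descendant $v$, then $\len(P)=\dist_G(r,v)-\dist_G(r,u)$, so concatenating a shortest $r$--$u$ path with $P$ witnesses that $P$ is a shortest $u$--$v$ path in $G$; thus every vertical (ancestor--descendant) path of $T$ is a geodesic of $G$, no matter what the rest of $G$ looks like. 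Hence it suffices to build a partition $\mathcal{P}$ of $V(G)$ into vertical paths of $T$ with $\tw(G/\mathcal{P})\le\zeta(g)$. I also record the elementary fact that if $\mathcal{P}$ refines $\mathcal{P}_0$ with each $\mathcal{P}_0$-part splitting into at most $q$ parts of $\mathcal{P}$, then $G/\mathcal{P}$ is a subgraph of $(G/\mathcal{P}_0)\boxtimes K_q$, so $\tw(G/\mathcal{P})\le q(\tw(G/\mathcal{P}_0)+1)-1$.

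\textbf{Planar base case.} For $g=0$ I would invoke the tripod decomposition underpinning the Planar Graph Product Structure Theorem: add edges to $G$ to obtain a near-triangulation $G^+$ that preserves the BFS layering of $G$ (every added edge joins vertices in equal or consecutive layers, so $T$ remains a BFS tree of $G^+$), then run the Sperner-type recursion that peels triangles off the current boundary cycle, which is always a union of at most three vertical paths of $T$. This produces a partition $\mathcal{P}_0$ of $V(G)$ into tripods, each a union of at most three vertical paths of $T$, with $\tw(G^+/\mathcal{P}_0)=O(1)$. Splitting each tripod into its at most three vertical paths yields $\mathcal{P}$; since $G\subseteq G^+$, $\tw(G/\mathcal{P})\le\tw(G^+/\mathcal{P})\le 3(\tw(G^+/\mathcal{P}_0)+1)-1$, which defines $\zeta(0)$.

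\textbf{Genus step.} For $g\ge 1$, fix a $2$-cell embedding of $G$ in a surface of Euler genus $g$ (WLOG). Since the separating cycles of $G$ form a proper subspace of its cycle space (of codimension $g$) while the fundamental cycles $\{C_e\}$ span the cycle space, some $C:=C_e$ is non-separating. Now $C_e$ is the non-tree edge $e=uv$ together with the two vertical $T$-paths from $u$ and from $v$ up to $\lca(u,v)$, i.e.\ a union of at most three geodesics of $G$ (the edge $e$ being a length-$1$ geodesic); put these at most three pieces into $\mathcal{P}$ as their own parts. Cutting the surface along $C$ gives a surface of Euler genus $\le g-1$ and a graph $\widetilde G$, obtained from $G$ by splitting each $w\in V(C)$ into copies $w',w''$ on the new boundary, embedded there. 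Cap the boundary with a disk, place a new root $\rho$ in it, and join $\rho$ to each boundary copy of each $w\in V(C)$ by a pendant path of length $\dist_G(r,w)$ inside the disk, obtaining $\widetilde G^+$ of Euler genus $\le g-1$; lifting a shortest $r$--$x$ path of $G$ and re-entering through the gadget at its last visit to $V(C)$ shows $\dist_{\widetilde G^+}(\rho,x)=\dist_G(r,x)$ for every $x\in V(G)\setminus V(C)$. Apply induction to $\widetilde G^+$ with a BFS tree $\widetilde T^+$ rooted at $\rho$, obtaining a partition of $V(\widetilde G^+)$ into $\widetilde T^+$-vertical paths with quotient treewidth $\le\zeta(g-1)$. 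Discard the gadget vertices and the copies of $V(C)$, and restrict each surviving part to $V(G)\setminus V(C)$: by the distance identity and the vertical-path-is-geodesic lemma, each resulting part is a geodesic of $G$, and together with the at most three $C$-pieces they partition $V(G)$. For the treewidth bound, note that deleting the $V(C)$-copies fragments a recursive part into vertical sub-paths that are pairwise non-adjacent in $G/\mathcal{P}$ (consecutive sub-paths are separated in $\widetilde T^+$ by a deleted copy, so their endpoints differ in BFS depth by $\ge 2$) and whose $G/\mathcal{P}$-neighborhoods lie within those of the original part plus the at most three $C$-parts; combining a tree-decomposition of the recursive quotient with these facts yields $\tw(G/\mathcal{P})\le\zeta(g)$ for a suitable $\zeta$.

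\textbf{Main obstacle.} The delicate point is the treewidth bound across the cut: $C$ can be long (its length is not bounded by $g$), so its duplicated boundary vertices can fragment a recursively-produced part into unboundedly many pieces, and one must argue this fragmentation does not blow up treewidth --- either via a more careful construction of $\widetilde T^+$, or by strengthening the induction hypothesis so that a prescribed family of vertical paths (here, the image of the boundary cycle) may be pre-assigned as parts. An alternative route that avoids cutting is to lift the planar tripod argument directly to surfaces in the style of Distel, Hickingbotham, Huynh and Wood, whose Euler-genus-$g$ product-structure decomposition refines to a partition into unions of $O(g)$ vertical paths of a BFS tree; splitting these as in the planar case gives $\zeta(g)=O(g)$. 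The remaining ingredients --- the reduction to BFS vertical paths, the refinement inequality, the existence of a non-separating fundamental cycle, and the distance identity through the gadget --- are routine.
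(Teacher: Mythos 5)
A preliminary remark: this paper does not prove the statement at all — it is imported verbatim as Theorem~18 of Pilipczuk and Siebertz \cite{PS19} (with $\zeta(g)=16g+9$, later improved to $2g+7$ by Distel et al.\ \cite{DHHW22}) — so the only meaningful comparison is with the proof in the cited source. Your high-level plan does match that proof's strategy: reduce to vertical paths of a BFS tree (these are indeed geodesics, and your refinement inequality is correct), treat the planar case by a Sperner/tripod-style peeling, and reduce the Euler genus by cutting along a non-separating fundamental cycle of the BFS tree, which is the union of two vertical paths and an edge.

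As a proof, however, the proposal has a genuine gap, and it is precisely the one you flag yourself. In the genus step, after cutting along $C$ and recursing, a recursively produced part can be fragmented by the deleted copies of $V(C)$ into arbitrarily many vertical subpaths. Your observation that fragments of a common parent part are pairwise non-adjacent in $G$ does not rescue the treewidth bound: the resulting quotient is only a subgraph of the recursive quotient blown up by independent sets, and such blow-ups have unbounded treewidth (blowing up a single edge by independent sets of size $q$ already contains $K_{q,q}$). So the sentence ``combining a tree-decomposition of the recursive quotient with these facts yields $\tw(G/\mathcal{P})\le\zeta(g)$'' does not go through as written; this is the heart of the argument, not a routine verification. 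The repair you name — strengthening the induction hypothesis so that a prescribed, bounded-size family of geodesics (the cut cycle, and in the planar case the current boundary) must be pre-assigned as parts, so that cut vertices are never interior to recursive parts — is indeed how arguments of this kind are made to work, but you do not carry it out, and carrying it out (in particular redoing the planar peeling so that it tolerates a prescribed boundary of boundedly many geodesics) is the bulk of the work. Your alternative route via the Euler-genus decomposition of Distel, Hickingbotham, Huynh and Wood is circular for present purposes, since \cite{DHHW22} is exactly the reference proving this statement with $\zeta(g)=2g+7$. Finally, the planar base case also rests on an unproved claim: that a plane graph can be triangulated by adding only edges within or between consecutive BFS layers, so that the original BFS tree survives. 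This is plausible (ear-cutting each face at a deepest boundary vertex, allowing near-triangulations), but it needs an argument, because the published tripod proofs take a fresh BFS tree of the triangulation, whose vertical paths need not be geodesics of $G$ itself.
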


Pilipczuk and Siebertz~\cite{PS19} proved \Cref{theorem:geodesic_partition_bounded_genus} with $\zeta(g)= 16g +9$, which was later improved to  $\zeta(g)= 2g +7$ by Distel~et~al.~\cite{DHHW22}.

The next lemma lifts the previous statement to $(m,m,m,0)$-almost-embeddable graphs. This type of argument is folklore in the structural graph theory community.

We use the following convenient notation for manipulating paths in a graph. 
Let $G$ be a graph.
A \emph{walk} in $G$ is a sequence $(v_1, \dots, v_m)$ of vertices in $G$ such that $v_i v_{i+1} \in E(G)$ for each $i \in [m-1]$.
Let $U =(u_1, \dots, u_\ell)$ and $W=(w_1, \dots, w_m)$ be two walks in $G$ such that $u_\ell w_1 \in E(G)$ or $u_\ell=w_1$. The \emph{concatenation} of $U$ and $W$, denoted by $UW$, is the walk $(u_1, \dots, u_\ell,w_1, \dots, w_m)$ if $u_\ell w_1 \in E(G)$, or $(u_1, \dots, u_\ell,w_2, \dots, w_m)$ if $u_\ell = w_1$. Let $P$ be a path in $G$ and let $u,v$ be two vertices of $P$.
Define $uPv$ to be the subpath of $P$ from $u$ to $v$ (which is also a walk in $G$). This allows us to write expressions of the form $aPbcQdRe$ given that: 
$a,b,c,d,e$ are vertices in the graph; 
$P$ is a path containing $a$ and $b$; 
$bc$ is an edge;
$Q$ is a path containing $c$ and $d$;
$R$ is a path containing $d$ and $e$.

\begin{lemma}\label{lemma:geodesic_partition_k_almost_embeddable_graphs}
There is a function $\beta$ such that for every integer $m\geq 0$, for every $(m,m,m,0)$-almost-embeddable graph $G$, there is a partition $\mathcal{P}$ of $G$ into geodesics in $G$ such that $\tw(G/\mathcal{P}) < \beta(m)$.
\end{lemma}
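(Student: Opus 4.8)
The plan is to reduce to the bounded–genus case, \Cref{theorem:geodesic_partition_bounded_genus}, by peeling off the vortices.

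\emph{Setup and reductions.} We may assume $G$ is connected, since for a disconnected $G$ one partitions each component separately and the treewidth of the quotient is the maximum over the components. Fix a BFS spanning tree $T$ of $G$ rooted at a vertex $r$ and set $L_j=\{v:\dist_G(r,v)=j\}$. We use two facts freely: every path of $T$ joining a vertex to one of its ancestors is a geodesic of $G$; and any path $v_0v_1\cdots v_q$ of $G$ with $\dist_G(r,v_0)<\dist_G(r,v_1)<\cdots<\dist_G(r,v_q)$ is a geodesic of $G$, because then $q=\dist_G(r,v_q)-\dist_G(r,v_0)\le\dist_G(v_0,v_q)\le q$. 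Unpacking the hypothesis, there is a surface $\Sigma$ of Euler genus at most $m$, an embedded subgraph $G_0$, vortices $G_1,\dots,G_s$ with $s\le m$, and pairwise disjoint open discs $\Delta_1,\dots,\Delta_s$ missing the embedding of $G_0$, such that $G=G_0\cup G_1\cup\cdots\cup G_s$, the $G_i$ $(i\ge1)$ are pairwise vertex-disjoint, $V(G_i)\cap V(G_0)=V(\Omega_i)$, the vertices of $\Omega_i$ lie in cyclic order on $\partial\Delta_i$, and $G_i$ carries a vortical decomposition of width at most $m$ along $\Omega_i$. Cutting that cyclic decomposition at one vertex of $\Omega_i$ and adding to every bag the at most $m$ vertices whose interval wraps around the cut turns it into a path-decomposition of $G_i$ of width less than $2m$; in particular $\tw(G_i)<2m$, and every $u\in V(G_i)\setminus V(\Omega_i)$ has all of its $G$-neighbours in $V(G_i)$.

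\emph{The surface part.} The obstruction here is that geodesics of $G_0$ need not be geodesics of $G$, because a vortex can shortcut two of its boundary vertices. The remedy is to run the (layering-driven) Pilipczuk--Siebertz argument behind \Cref{theorem:geodesic_partition_bounded_genus} on $G_0$ equipped with the layering $(L_j\cap V(G_0))_{j\ge0}$ inherited from $G$ rather than with a BFS layering of $G_0$ itself, and with the underlying spanning tree chosen so that its branches are monotone in $(L_j)_{j\ge0}$; this produces a partition $\mathcal{P}_0$ of $G_0$ whose parts are monotone paths in $(L_j)_{j\ge0}$, hence geodesics of $G$ by the fact above, with $\tw(G_0/\mathcal{P}_0)<\zeta_0(m)$ for some function $\zeta_0$. (If one insists on using \Cref{theorem:geodesic_partition_bounded_genus} verbatim, one would instead apply it to $G_0$ and split each output geodesic into maximal $\dist_G(r,\cdot)$-monotone subpaths; bounding the number of pieces is then the delicate point.)

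\emph{The vortex interiors and the quotient bound.} For each $i$, partition $V(G_i)\setminus V(\Omega_i)$ into maximal vertical paths of $T$; these are geodesics of $G$, and since contracting these connected paths makes $G_i-\Omega_i$ into a minor of $G_i$, their quotient has treewidth less than $2m$. Let $\mathcal{P}$ be $\mathcal{P}_0$ together with all of these vertical paths, over all $i$; this is a partition of $G$ into geodesics of $G$. Then $G/\mathcal{P}$ is obtained from $G_0/\mathcal{P}_0$ by attaching, for each $i$, a graph of treewidth $O(m)$ glued along the set $\Omega_i'$ of parts of $\mathcal{P}_0$ that meet $\Omega_i$, and $\tw$ of a graph glued from $G_0/\mathcal{P}_0$ and $s\le m$ graphs of treewidth $O(m)$ over interfaces $\Omega_i'$ of bounded size is $O(\zeta_0(m)+m)$. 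I expect this final step to be the main obstacle: it hinges on controlling the $\Omega_i'$, and the plan is to choose $\mathcal{P}_0$ compatibly with the nooses $\partial\Delta_i$ so that each $\Omega_i$ is covered by only $O(m)$ of its geodesics (whence $|\Omega_i'|=O(m)$), exploiting that $\Omega_i$ bounds the empty disc $\Delta_i$ and is therefore a noose of $G_0$'s embedding; candidate implementations include adding an auxiliary vertex inside each $\Delta_i$ joined to $\Omega_i$ before running the surface argument and pruning it afterwards, or cutting $G_0$ along each noose and recursing on the pieces. With $|\Omega_i'|=O(m)$ in hand, combining the tree-decompositions bag-wise yields $\tw(G/\mathcal{P})<\beta(m)$ for a suitable function $\beta$, as required.
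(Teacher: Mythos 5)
Your high-level plan (reduce to the bounded-genus case) matches the paper's, but both of the steps you flag as delicate are genuine gaps, and neither is fixable along the lines you sketch. For the surface part, running the Pilipczuk--Siebertz argument on $G_0$ with the layering $(L_j\cap V(G_0))_j$ inherited from $G$ requires a spanning structure of $G_0$ whose branches are monotone in that layering, i.e.\ every non-root vertex of $G_0$ must have a $G_0$-neighbour in the previous layer. This fails: a shortest $G$-path from $r$ to a vertex of $G_0$ may enter a vortex at one boundary vertex, traverse the vortex interior (whose vertices have all their neighbours inside the vortex), and exit elsewhere, so a vertex of $G_0$ in $L_{j+1}$ need not have any $G_0$-neighbour in $L_j$. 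Your fallback (splitting $G_0$-geodesics into $\dist_G(r,\cdot)$-monotone pieces) fails for the reason you suspect: the number of pieces is unbounded. For the gluing step, your plan hinges on each $\Omega_i$ being covered by $O(m)$ parts of $\mathcal{P}_0$, but this is unobtainable: $|\Omega_i|$ is unbounded, a geodesic partition has no reason to cover a long noose with few parts (indeed for vertical-path partitions one gets $|\Omega_i'|=\Omega(|\Omega_i|)$), and adding an auxiliary apex in $\Delta_i$ does not change this. Treating the vortex as a piece clique-summed over a small interface is the wrong model; the interface is inherently large.

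The paper resolves both issues differently. It builds an auxiliary graph $G'$ from $G_0$ by adding the cycle edges along each $\Omega_i$ and a star from one fixed vertex $r\in V(\Omega)$ to all of $V(\Omega)$ (costing only $2(s-1)$ extra genus). The star simulates vortex shortcuts up to additive error $1$, giving $\dist_{G'}(u,v)\le\dist_G(u,v)+1$, and forces every $G'$-geodesic to meet $V(\Omega)$ in at most three vertices; consequently each $G'$-geodesic splits into at most six $G$-geodesics. One then applies \Cref{theorem:geodesic_partition_bounded_genus} to $G'$ and keeps every vortex-interior vertex as a \emph{singleton} part. The treewidth of $G/\mathcal{P}$ is controlled not by a small interface but by inflating each bag of the $G'/\mathcal{P}'$ decomposition: for each of the at most three $\Omega$-vertices $w$ on a part one adds the at most $k$ singletons lying in the vortex bag $B_w$, a factor of $6+3k$ overall, with the connectivity condition supplied by the interval structure of the vortical decomposition together with the added $\Omega_i$-cycle edges. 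If you want to salvage your write-up, you need to import both of these ideas; the bounded-width vortical decomposition must enter the treewidth accounting, and your current argument never uses it.
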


\begin{proof}
    Let $\beta(m)=\zeta(m+2m-2)(11+3m)$ for all integers $m\geq0$, where $\zeta(\cdot)$ is the function given by~\Cref{theorem:geodesic_partition_bounded_genus}. 

    Fix $m\geq0$ and let $g$, $p$, $k$ be integers with $0\leq g,p,k\leq m$.
    Let $G$ be a $(g,p,k,0)$-almost-embeddable graph. 
    If $G$ is not connected, then we can process each component independently and take the union of the resulting partitions.
    Now assume that $G$ is connected.
    Let $s$, $G_0,G_1,\dots,G_s$, $\Omega_1, \dots, \Omega_s, \Sigma$, witness the fact that $G$ is $(g,p,k,0)$-almost-embeddable, and fix a vortical decomposition $(\Omega_i,(B^i_x\mid x\in V(\Omega_i)))$ of $G_i$ of width at most $k$, for every $i \in [s]$. 
    For convenience, we denote by $\Omega$ the permutation $\bigcup_{i \in [s]} \Omega_i$. 
    By definition, $\Omega_1, \dots, \Omega_s$ are pairwise disjoint, hence, for $x \in V(\Omega)$, we write $B_x  = B_x^i$ for the unique $i \in [s]$ such that $x \in V(\Omega_i)$.

    Let $G'$ be $G_0$ if $s=0$, and otherwise let $G'$ be a graph obtained from $G_0$ as follows:
    for every $i\in [s]$, for every pair $u,v$ of consecutive vertices on $\Omega_i$, if $uv \notin E(G_0)$, then add the edge $uv$
    (note that this is compatible with the embedding of $G_0$); 
    next pick arbitrarily a vertex $r\in V(\Omega)$ and 
    for all $v\in V(\Omega)-\set{r}$, if $rv \notin E(G_0)$, then add the edge $rv$. 
    Note that we may add $s-1$ handles to $\Sigma$, and embed $G'$ on the resulting surface, thus, $G'$ has Euler genus at most $g + 2(s-1) \leq g + 2p-2$.

    \begin{claim}
    $\dist_{G'}(u,v) \leq \dist_G(u,v) + 1$, for every $u,v \in V(G')$.
    \end{claim}
    \begin{proofclaim}
        Let $P$ be a geodesic in $G$ with endpoints $u$ and $v$.
        If $P$ intersects $V(\Omega)$ in at most one vertex, then $P$ is a path between $u$ and $v$ in $G'$, and so $\dist_{G'}(u,v) \leq \len(P) = \dist_G(u,v)$.
        Now suppose that $P$ contains at least two vertices in $V(\Omega)$.
        Let $u'$, $v'$ be such vertices that are closest in $P'$ to $u$ and $v$, respectively. 
        Then $uPu' r v'Pv$ is a walk from $u$ to $v$ in $G'$ of length at most $\len(P)+1 = \dist_G(u,v) + 1$, and so $\dist_{G'}(u,v) \leq \dist_G(u,v) + 1$.
    \end{proofclaim}

    \begin{claim}
        For every geodesic $P'$ in $G'$, $P'$ contains at most three vertices in $V(\Omega)$.
    \end{claim}
    \begin{proofclaim}
        Let $P'$ be a geodesic in $G'$ between $u$ and $v$.
        Suppose to the contrary that $P'$ has at least four vertices in $V(\Omega)$, and let $u'$, $v'$ be such vertices that are closest in $P'$ to $u$ and $v$, respectively. 
        Now $u'$ and $v'$ can be connected by a two-edge path via $r$ in $G'$. Therefore, $Q'=uP'u'rv'P'v$ is a walk in $G'$, and since there are at least two vertices on $P'$ between $u'$ and $v'$, the walk $Q'$ is shorter than $P'$, a contradiction. 
    \end{proofclaim}

    \begin{claim}
    For every geodesic $P'$ in $G'$, the vertex set of $P'$ is the union of the vertex sets of at most six disjoint geodesics in $G$, and moreover, each of these geodesics contains at most one vertex in $V(\Omega)$. 
    \end{claim}
    \begin{proofclaim}
        Let $P'$ be a geodesic in $G'$ between $u$ and $v$.
        Since $P'$ has at most three vertices in $\Omega$, it can be split into the disjoint union of at most three geodesics in $G'$ such that each part has at most one vertex in $\Omega$.

        Consider now a geodesic $Q$ in $G'$ with at most one vertex in $\Omega$. 
        The key property of $Q$ is that it is also a path $G$.
        We are going to prove that $Q$ can be split into at most two geodesics in $G$. 
        Let $a,b \in V(G_0)$ be the endpoints of $Q$.
        
        By a previous claim, 
        $\len(Q)=\dist_{G'}(a,b) \leq \dist_G(a,b) + 1$.
        Since $Q$ is a path in $G$ we also have $\dist_G(a,b) \leq \len(Q)$. Altogether,
        \[
        \len(Q) \in\set{\dist_{G}(a,b),\dist_{G}(a,b)+1}.
        \]
        If $\len(Q)=\dist_G(a,b)$ then $Q$ is a geodesic in $G$ and there is nothing to prove. 
        Now suppose that $\ell=\len(Q)=\dist_G(a,b)+1$.  
        Let $(q_0,\ldots,q_{\ell})$ be the walk along $Q$ with 
        $q_0=a$ and $q_{\ell}=b$. 
        For each $i\in\set{0,\ldots,\ell}$
        consider $d_i=\dist_G(q_0,q_i)$. 
        Note that 
        \[
         d_0=0,\  d_{\ell}=\ell-1,\ \textrm{and}\ d_{i}-d_{i-1}\in\set{-1,0,1}\ \textrm{for all $i\in[\ell]$} .
        \]
        These three conditions force that $d_{i}-d_{i-1}=1$ for all $i\in[\ell]$ except one value, say $j$, for which $d_{j}-d_{j-1}=0$. It follows, that $\dist_G(q_0,q_{j-1}) = j-1$, and $\dist_G(q_j,q_\ell) = \ell - j$, hence,
        \[
         aQq_{j-1}\ \textrm{and}\ q_{j}Qb\ \textrm{are geodesics in $G$.}
        \]
        This completes the proof that $Q$ can be split into at most two geodesics in $G$.

        Altogether, $P'$ is split into at most three times two geodesics in $G$, as desired.
    \end{proofclaim}

    Since $G'$ has Euler genus at most $g + 2(s-1) \leq g + 2p-2$, by \Cref{theorem:geodesic_partition_bounded_genus}, there is a partition $\mathcal{P}'$ of $G'$ into geodesics in $G'$ such that $\tw(G'/\mathcal{P}') < \zeta(g+2p-2)$.
    Let $(T,(W'_x \mid x \in V(T)))$ be a tree-decomposition of $G'/\mathcal{P}'$ of width at most $\zeta(g+2p-2)$.

    For each $P' \in \cgP'$, let $S(P')$ be a set of at most six geodesics in $G$ whose union of vertex sets is $V(P')$, and such that each of them intersects $V(\Omega)$ in at most one vertex. Define a partition of $V(G)$ into geodesics in $G$ by
    \[ 
        \cgP = \bigcup_{P' \in \cgP'} S(P') \cup \{\{u\} \mid u \in V(G) \setminus V(G_0)\} .
    \]
    We claim that $\tw(G/\mathcal{P}) < (\tw(G'/\mathcal{P}')+1)\cdot(6+3k) \leq  \zeta(g+2p-2)\cdot(6+3k)$.

    The family $\cgP$ is a partition of $G$ and the family $\cgP'$ is a partition of $G'$, thus, for each $u \in V(G)$ and $v \in V(G')$ we can define $P_u \in \cgP$ and $P_v' \in \cgP'$ to be such that $u \in P_u$ and $v \in P_v'$. 
    For each $x \in V(T)$, consider the following subsets of $\cgP$:
        \begin{align*}
            W_x^1 &= \bigcup_{P' \in W_x'} S(P'),\\
            W_x^2 &= \bigcup_{P' \in W_x'} \bigcup_{w \in V(\Omega) \cap V(P')} \{ P_v \mid v \in B_w \},\\
            W_x &=W_x^1 \cup W_x^2.
        \end{align*}

    Clearly, $|W_x^1| \leq 6|W_x'|$. Moreover, we proved that every geodesic in $G'$ has at most three vertices in $V(\Omega)$, thus, $|W_x^2| \leq 3k|W_x'|$. It follows that $|W_x| \leq |W_x'| \cdot (6+3k)$. Therefore, if we show that $(T,(W_x \mid x \in V(T)))$ is a tree-decomposition of $G/\mathcal{P}$, then indeed, $\tw(G/\mathcal{P}) < (\tw(G/\mathcal{P}')+1) \cdot (6+3k)$.

    \begin{claim}
        $(T,(W_x \mid x \in V(T)))$ is a tree-decomposition of $G/\mathcal{P}$.
    \end{claim}
    \begin{proofclaim}
    Let $u,v \in V(G)$ be such that $P_u$ and $P_v$ are distinct, and suppose that $uv \in E(G)$. If $uv \in E(G_0)$, then there exists $x \in V(T)$ such that $P_u',P_v' \in W'_x$. Moreover, $P_u \in S(P_u')$ and $P_v \in S(P_v')$. It follows that $P_u,P_v \in W_x^1 \subseteq W_x$. If $uv \in \bigcup_{i \in [s]}E(G_i)$ then there exists $w \in V(\Omega)$ such that $u,v \in B_w$. We have $P_w' \in W_x'$ for some $x \in V(T)$, and this yields $P_u,P_v \in W_x^2 \subset W_x$.

    It remains to show that for every $P \in \mathcal{P}$, the set $X_P = \{x \in V(T) \mid P \in W_x\}$ induces a non-empty, connected subset of $V(T)$. For every $P' \in \cgP$, 
    let $X'_{P'}$ be defined as $\{x \in V(T) \mid P' \in W_x'\}$. 
    Since $(T,(W'_x \mid x \in V(T))$ is a tree-decomposition of $G'/\mathcal{P}'$, we have that $X'_{P'}$ induces a non-empty, connected subset of $V(T)$. 
    Observe that the union $\bigcup_{w \in V(H')} X'_{P'_w}$, where $H'$ is a connected subgraph of $G'$, also induces a non-empty, connected subset of $V(T)$.
    For each $u \in V(G)$, let $I_u = \{w \in V(\Omega) \mid u \in B_w\}$.
    Since $V(G_1), \dots, V(G_s)$ are pairwise disjoint, and $(\Omega_i, (B_x \mid x \in V(\Omega_i)))$ is a vortical decomposition of $G_i$ for each $i \in [s]$,
    $I_u$ is either empty, or is an interval in some $\Omega_i$.
    Recall that we added cycle edges in $G'$ representing each $\Omega_i$, and hence, $I_u$ induces a connected subgraph in $G'$.
    
    First, suppose that $P = \{u\}$ for some $u \in V(G)\setminus V(G_0)$. By definition, 
        \[ X_{\{u\}} = \bigcup_{w\in I_u}X'_{P_w'}.\]
    Since $I_u$ is connected in $G'$, we conclude that 
    $X_{\{u\}}$ induces a non-empty, connected subset of $V(T)$.
    
    Now, suppose that $P \in S(P')$ for some $P' \in \cgP'$. Recall that $P$ contains at most one vertex in $V(\Omega)$. If $V(P) \cap V(\Omega) = \emptyset$, then $X_P = X'_{P'}$, which induces a non-empty, connected subtree of $T$. Otherwise, let $w$ be the unique vertex in the intersection $V(P) \cap V(\Omega)$. Then 
        \[
        X_P = X'_{P'} \cup \bigcup_{v \in I_w} X'_{P_v'}.
        \]
    Note that $w \in P'$, thus $P' = P_w'$, and since $w \in I_w$, we have $X_P = \bigcup_{v \in I_w} X'_{P_v'}$. This shows that $X_{P}$ induces a non-empty, connected subtree of $T$.
    \end{proofclaim}
    
    This completes the proof that $\tw(G/\mathcal{P})<\beta(m)=\zeta(m+2m-2)(11+3m)$.
\end{proof}

The next lemma is an immediate corollary of~\Cref{lemma:geodesic_partition_k_almost_embeddable_graphs} and \Cref{lemma:helly_property_tree_decomposition}. The function $\beta$ is the same as in \Cref{lemma:geodesic_partition_k_almost_embeddable_graphs}.

\begin{cor}\label{lemma:helly_property_tree_decomposition_almost_embeddable}
There exists a function $\beta$ such that for all integers $m\geq 0$ and $d \geq 1$, for every $(m,m,m,0)$-almost-embeddable graph $G$, for every family $\cgF$ of connected subgraphs of $G$ either:
    \begin{enumerate}
        \item there are $d$ pairwise vertex-disjoint subgraphs in $\mathcal{F}$, or
        \item there exist a subgraph $X$ of $G$ that is the union of at most $(d-1)\beta(m)$ geodesics in $G$, and for every $F \in \cgF$ we have $V(F) \cap V(X) \neq \emptyset$.
    \end{enumerate}    
\end{cor}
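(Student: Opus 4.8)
The plan is to derive this corollary by feeding the geodesic partition from \Cref{lemma:geodesic_partition_k_almost_embeddable_graphs} into the tree-decomposition Helly property of \Cref{lemma:helly_property_tree_decomposition}. Let $\beta$ be the function supplied by \Cref{lemma:geodesic_partition_k_almost_embeddable_graphs}. Fix $m\geq 0$, $d\geq 1$, an $(m,m,m,0)$-almost-embeddable graph $G$, and a family $\cgF$ of connected subgraphs of $G$. First I would apply \Cref{lemma:geodesic_partition_k_almost_embeddable_graphs} to obtain a partition $\cgP$ of $G$ into geodesics of $G$ with $\tw(G/\cgP) < \beta(m)$, and fix a tree-decomposition $\cgW$ of $G/\cgP$ of width at most $\beta(m)-1$, so that every bag of $\cgW$ contains at most $\beta(m)$ members of $\cgP$.

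Next I would transport $\cgF$ to $G/\cgP$ via the quotient map: for each connected subgraph $F$ of $G$, let $F'$ be the subgraph of $G/\cgP$ induced by $\{P\in\cgP \mid V(P)\cap V(F)\neq\emptyset\}$. Since $F$ is connected and $\cgP$ partitions $V(G)$, the image $F'$ is a connected subgraph of $G/\cgP$. Moreover, since each vertex of $G$ lies in exactly one part of $\cgP$, whenever $F_1',\dots,F_d'$ are pairwise vertex-disjoint in $G/\cgP$ the corresponding subgraphs $F_1,\dots,F_d$ are pairwise vertex-disjoint in $G$. Let $\cgF' = \{F' \mid F\in\cgF\}$.

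Then I would apply \Cref{lemma:helly_property_tree_decomposition} to $G/\cgP$, the tree-decomposition $\cgW$, the family $\cgF'$, and the integer $d$. In the first outcome there are $d$ pairwise vertex-disjoint subgraphs in $\cgF'$, which by the previous paragraph yields $d$ pairwise vertex-disjoint subgraphs in $\cgF$, so item~(1) holds. In the second outcome there is a set $S$ of members of $\cgP$ that is the union of at most $d-1$ bags of $\cgW$, hence $|S|\leq (d-1)\beta(m)$, and $V(F')\cap S\neq\emptyset$ for every $F'\in\cgF'$. I would then take $X$ to be the union of the geodesics in $S$, a subgraph of $G$ that is the union of at most $(d-1)\beta(m)$ geodesics in $G$; for each $F\in\cgF$ there is $P\in S$ with $P\in V(F')$, i.e.\ $V(P)\cap V(F)\neq\emptyset$, so $V(X)\cap V(F)\neq\emptyset$, giving item~(2). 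There is no genuine obstacle here: the only points requiring care are that the quotient map carries connected subgraphs to connected subgraphs and respects vertex-disjointness, both of which are immediate from $\cgP$ being a vertex partition.
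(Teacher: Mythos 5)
Your proof is correct and is exactly the argument the paper intends: the paper states the corollary as an ``immediate'' consequence of \Cref{lemma:geodesic_partition_k_almost_embeddable_graphs} and \Cref{lemma:helly_property_tree_decomposition} without writing out details, and your writeup supplies precisely those details (quotient by the geodesic partition, apply the Helly property to $G/\cgP$, pull back). The two verifications you flag -- that the quotient preserves connectivity and reflects disjointness -- are indeed the only points needing care, and you handle both correctly.
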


Consider a graph embedded in a fixed surface. 
It is clear that one can introduce parallel edges, subdivide edges of the graph, and the resulting graph still has an embedding into the surface. 
The point of the following observation is that we can do the same with $(g,p,k,a)$-embeddable graphs and the resulting graph has the same parameters except for the width of the vortices that may go up by $+2$.
This is folklore in the structural and algorithmic graph theory community.

\begin{obs}\label{lemma:subdivided_almost_embeddable_graph}
    Let $g,p,k,a$ be non-negative integers,
    and let $G$ be a $(g,p,k,a)$-almost-embeddable graph.
    For every graph $G'$ obtained from $G$ by duplicating some edges and then subdividing some edges, 
    $G'$ is $(g,p,k+2,a)$-almost-embeddable.
\end{obs}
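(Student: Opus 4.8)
I would perform all the duplications first, producing a graph $G''$, and then perform all the subdivisions on $G''$ to obtain $G'$. Duplicating an edge never affects any parameter: a parallel copy of an edge of the embedded part $G_0$ is drawn in a thin neighbourhood of the original (hence stays off the disc interiors $\Delta_1,\dots,\Delta_s$), a parallel copy of an edge of a vortex $G_i$ is covered by the same bag of its vortical decomposition, and a parallel apex edge is again an apex edge. So $G''$ is still $(g,p,k,a)$-almost-embeddable with the same witnessing data, and it suffices to show that subdividing an arbitrary set $S$ of edges of $G''$ yields a $(g,p,k+2,a)$-almost-embeddable graph. Fix witnessing data for $G''$: apex set $A$ with $|A|\le a$, the embedded part $G_0$ in a surface $\Sigma$ of Euler genus at most $g$, vortices $G_1,\dots,G_s$ ($s\le p$) with vortical decompositions $(\Omega_i,(B^i_x\mid x\in V(\Omega_i)))$ of width at most $k$, and the discs $\Delta_1,\dots,\Delta_s$. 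I would classify each edge of $S$ according to whether it lies in $E(G_0)$, in some $E(G_i)$, or is an apex edge (incident with $A$), and treat the three types separately.

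\textbf{The routine cases.} If $e=uv\in E(G_0)$ is subdivided by a new vertex $w$, place $w$ on the drawing of $e$ in $\Sigma$; since the drawing of $e$ is disjoint from $\Delta_1,\dots,\Delta_s$, so are $w$ and the two new edges $uw,wv$, so $w$ simply joins $G_0$ and nothing else changes. If $e=uv$ is an apex edge with $u\in A$, keep $w\notin A$, so that $uw$ remains an apex edge; it then remains to place $wv$. If $v\in V(G_0)$, add $w$ to $G_0$ inside a face of $G_0$ incident with $v$ whose interior contains no $\Delta_i$ (such a face exists because the disc interiors are disjoint from the embedding of $G_0$), drawing $wv$ in that face; if both $u,v\in A$, add $w$ to $G_0$ as an isolated vertex drawn off all the discs. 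In every such sub-case $g$, $s$, $|A|$ and all vortex widths are unchanged. The last remaining possibility for an apex edge is that $u\in A$ and $v$ is an interior (non-$\Omega_i$) vertex of some vortex $G_i$; here $uw$ stays apex and $w$ is added to $G_i$, which I fold into the next case.

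\textbf{The main case, and the obstacle.} What remains is to insert into each vortex $G_i$ the new degree-two vertices arising from subdividing its edges (plus the stray vertices coming from apex edges whose non-apex end is interior to $G_i$), rebuilding a vortical decomposition of the modified vortex over the \emph{same} cyclic sequence $\Omega_i$ — its vertices lie on $\partial\Delta_i\cap G_0$ and cannot be changed without altering the embedding — of width at most $k+2$. Inserting one vertex $w$ subdividing $uv$ is trivial: $u$ and $v$ lie in a common bag $B^i_x$, and putting $w$ in that bag alone keeps the index set of $w$ an interval and covers $uw$ and $wv$. The difficulty — and the reason the bound is $k+2$ rather than $k+1$ — is that many new vertices may be forced towards a single bag $B^i_x$ (precisely, the subdivisions of those edges of $B^i_x$ whose endpoints have intervals meeting only at $x$), so one cannot just dump them all there. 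Controlling this congestion is the technical heart of the proof: I would, for each new vertex, choose a short interval of $\Omega_i$ near a common bag of its two endpoints, and whenever a bag would become overloaded, relocate the offending substructure — a graph dominated by at most $k$ old vertices, hence of bounded path-width — out of the vortex, either into the face of $G_0$ lying outside the corresponding boundary arc of $\Delta_i$ or by a local redrawing of $\Delta_i$, the additive slack of $2$ being exactly what this rerouting needs. All other ingredients ($g$, the number $s\le p$ of vortices, the bound $|A|\le a$, and the disc system) are preserved, which yields that $G'$ is $(g,p,k+2,a)$-almost-embeddable. I expect this congestion-control step to be the only genuinely delicate point; the rest is bookkeeping.
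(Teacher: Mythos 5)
Your handling of the surface part and of the apex edges is essentially fine, but the main case --- subdivision vertices that must live inside a vortex $G_i$ --- contains a genuine gap, and it originates in a false premise. You declare that the cyclic sequence $\Omega_i$ ``cannot be changed without altering the embedding'' and therefore try to rebuild a vortical decomposition over the same index set; this manufactures a ``congestion'' problem (many new vertices whose only available home is a single bag $B^i_x$), which you then propose to resolve by relocating pieces of the vortex into a face of $G_0$ or by ``a local redrawing of $\Delta_i$'', with the remark that the slack of $2$ is ``exactly what this rerouting needs''. That step is not an argument, and it cannot be made to work in general: a vortex is precisely a part of the graph that need not embed in the disc, so a subdivided vortex edge together with the rest of $G_i$ cannot simply be rerouted through the surface without crossings, and under your constraint the width could be forced up by far more than $2$ (e.g.\ subdivide once every edge of $G_i[B^i_x]$ whose endpoints occur only at position $x$).

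The missing idea --- and the reason the bound is $k+2$ --- is that a vortical decomposition is indexed by a cyclic permutation into which one may \emph{insert new positions}. If $uv\in E(G_i)$ is subdivided into $u=w_0,w_1,\dots,w_d,w_{d+1}=v$, pick $x\in V(\Omega_i)$ with $u,v\in B^i_x$ and insert $d$ new consecutive positions immediately after $x$, the $\ell$-th carrying the bag $B^i_x\cup\{w_\ell,w_{\ell+1}\}$, of size at most $k+2$. Existing bags are untouched; every old vertex $v'$ gains exactly the blocks of new positions following those $x$ with $v'\in B^i_x$, so its index set remains an interval; each $w_\ell$ occurs only in two consecutive new bags; and every new edge $w_\ell w_{\ell+1}$ (as well as $uw_1$ and $w_dv$) is covered. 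Since the new positions are degree-two vertices, placing them on the boundary arc of $\Delta_i$ just after $x$ is a trivial adjustment of the embedding, so the disc condition survives. With this there is no congestion at all, and the same insertion absorbs your leftover case of a subdivided apex edge whose non-apex end is interior to a vortex. This is exactly the construction in the paper's proof.
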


The following observation says that almost-embeddability is preserved under taking subgraphs but, surprisingly, this may increase the width of the vortices. A proof can be found, for example, in \cite[Lemma~45]{DEMW23}.

\begin{obs}
Let $g,p,k,a$ be non-negative integers, and let $G$ be a $(g,p,k,a)$-almost-embeddable graph. 
Let $G'$ be a subgraph of $G$. 
Then $G'$ is $(g,p,2k,a)$-almost-embeddable.
\end{obs}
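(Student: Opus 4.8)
The plan is to transform the data witnessing that $G$ is $(g,p,k,a)$-almost-embeddable into data witnessing that $G'$ is $(g,p,2k,a)$-almost-embeddable, keeping the surface and the number of vortices and only allowing the width of each vortex to double. So fix an apex set $A$ with $|A|\le a$, graphs $G_0,\dots,G_s$ with $s\le p$, cyclic permutations $\Omega_1,\dots,\Omega_s$, width-$\le k$ vortical decompositions $(\Omega_i,(B^i_x\mid x\in V(\Omega_i)))$ of $G_i$, a surface $\Sigma$ of Euler genus $\le g$, and discs $\Delta_1,\dots,\Delta_s$ realizing the last two conditions. First I would set $A'=A\cap V(G')$, so $|A'|\le a$. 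For the embedded part I would start from the restriction of $G_0$ to $G'$ (vertex set $V(G_0)\cap V(G')$, edge set $E(G_0)\cap E(G')$), which inherits the embedding in $\Sigma$. For each $i$, let $G_i'$ be the restriction of $G_i$ to $G'$; if $G_i'$ is empty I simply discard this vortex, so the number of vortices never increases. Since $G-A=G_0\cup G_1\cup\dots\cup G_s$ with $G_1,\dots,G_s$ pairwise vertex-disjoint, every edge and vertex of $G'-A'$ lies in one of $G_0\cap G',G_1',\dots,G_s'$.

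The substantive step is to equip each surviving $G_i'$ with a vortical decomposition of width at most $2k$. Restrict $\Omega_i$ to the vertices surviving in $G'$; call the deleted positions \emph{dead}, and note that they form maximal runs between consecutive surviving positions (or a single cyclic run if no position of $\Omega_i$ survives). Start each surviving position $y$ with the bag $B^i_y\cap V(G')$ and then traverse each adjacent run of dead positions, \emph{absorbing} a dead bag (intersected with $V(G')$) into the current bag whenever doing so introduces no vertex of $G'$ that is not already present and not already used as a position, and otherwise splitting off a freshly appearing vertex of that dead bag as a new society vertex carrying the relevant restricted dead bags as its bag. The key point is that before a fresh position can be broken off, the current bag absorbs the content of at most one extra dead bag, which has size at most $k$; hence every bag has size at most $2k$. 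The new society vertices lie in $G_i'$ but not in $G_0$, so I would place them, in the order dictated by the traversal, on the boundary of $\Delta_i$ at the location of the dead positions they replace, and add them as isolated vertices to the embedded part; then the boundary of each $\Delta_i$ meets the enlarged embedded part exactly in the new society of $G_i'$, compatibly with its cyclic order. It remains to verify the defining properties of a vortical decomposition: $x\in B_x$; every surviving edge lies in some bag (an edge dies only if it lay solely in a dead bag, which then got absorbed); the interval property (a vertex's family of new bags is its old family, restricted and possibly extended along a run of absorbed dead bags, which is still a cyclic interval); and pairwise distinctness of the positions.

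I expect the vortex surgery to be the only real obstacle: one must check that the absorption rule never pushes a bag past $2k$ — the delicate case being when the only candidates for a fresh position in a dead bag are vertices already used as positions, forcing an absorption — and that the interval property and distinctness of positions hold simultaneously throughout the traversal. Everything else is routine bookkeeping: with $A'$, the enlarged embedded part (still embeddable in $\Sigma$), the width-$\le 2k$ vortices $G_1',\dots,G_s'$, and the discs $\Delta_1,\dots,\Delta_s$, all of conditions (1)--(5) in the definition of $(g,p,2k,a)$-almost-embeddability hold, which gives the claim. A detailed treatment along these lines is in~\cite[Lemma~45]{DEMW23}.
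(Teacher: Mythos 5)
The paper itself gives no proof of this observation: it simply points to \cite[Lemma~45]{DEMW23}, so the closing citation in your proposal is, as far as this paper is concerned, the entire argument. Your supplementary sketch has the right overall shape --- restrict the apex set, the embedded part and each vortex to $G'$, discard empty vortices, and observe that the only non-trivial task is to repair the vortical decompositions when society vertices are deleted, at the cost of doubling their width --- and that is indeed what the cited lemma does.

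That said, the vortex surgery, which you correctly identify as the only real content, is not actually carried out, and the rules as stated do not yet yield the claim. Two concrete problems. First, the assertion that ``before a fresh position can be broken off, the current bag absorbs at most one extra dead bag'' is not what your absorption rule guarantees: the rule permits absorbing arbitrarily many consecutive dead bags whose surviving vertices are each ``already present or already used as a position'', and a vertex of the second kind need not lie in the current bag, so each such absorption can add up to $k$ new vertices and push the bag well past $2k$. To exclude this one must argue, via the interval property of the \emph{original} decomposition, that any previously-used position appearing in a later dead bag already lies in $B_y\cap V(G')$ (or in the bag created at the most recent split); you do not do this. Second, a ``freshly appearing'' vertex $v\in B_x\cap V(G')$ that you promote to a new society vertex may itself be a surviving original society vertex occurring later in the cyclic order (the interval property forces $v$ into every bag between $x$ and $v$'s own position, so this genuinely happens); promoting it at $x$ makes $v$ a position twice, contradicting the requirement that the new society be a cyclic permutation of a set of distinct vertices. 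Both issues are fixable, and the detailed treatment you cite handles them, but as written the sketch does not close them --- you flag the first yourself without resolving it. Given that the paper treats the citation as the proof, either rely on the citation alone or write the surgery out in full; the partial sketch is the one option that leaves a visible gap.
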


We have all the tools in hand to prove~\Cref{lem:X:geodesics}.

\begin{proof}[Proof of~\Cref{lem:X:geodesics}]
    Let $t,d$ be positive integers, let $G$ be a $K_t$-minor-free graph, and let $\cgF$ be a family of connected subgraphs of $G$. 
    If $\mathcal{F}$ is empty, then the result holds since $\beta(t),\gamma(t) \geq 0$. Now assume that $\mathcal{F}$ is non-empty.
    Without loss of generality, we can assume that each member of $\cgF$ is an induced subgraph. Therefore, with a slight abuse of notation, from now on we refer to $\cgF$ as a family of subsets of $V(G)$ such that each induces a connected graph.
    Suppose that \cref{lem:item:X:geodesics:disjoint} does not hold; that is, $\cgF$ has no $d$ pairwise disjoint members. In particular, $d\geq2$.
    
    Let $\alpha$ be the function from \Cref{thm:robertson-seymour-decomposition}. 
    By the theorem, there exists a tree-decomposition $\cgB = (T,(B_x \mid x \in V(T)))$ of $G$ such that $\torso(G,\cgB,x)$ is $\alpha(t)$-almost-embeddable, for every $x \in V(T)$. 
    For each $x \in V(T)$, let $A_x$ be the apex set of $\torso(G,\cgB,x)$ (that is, $A_x$ is a set of at most $\alpha(t)$ vertices such that $\torso(G,\cgB,x) - A_x$ is $(\alpha(t),\alpha(t),\alpha(t),0)$-almost-embeddable).
    By \Cref{lemma:helly_property_tree_decomposition}, there exists an integer $d' < d$ and $x_1, \dots, x_{d'} \in V(T)$ such that for every $F \in \mathcal{F}$, $F$ intersects $\bigcup_{i\in[d']} B_{x_i}$.
    Let $A = \bigcup_{i\in[d']} A_{x_i}$.
    Note that $|A| \leq (d-1)\alpha(t)$, so it suffices to take $\gamma = \alpha$. 
    For each $i \in [d']$, let $\cgF_i'$ be the family of all $F \in \cgF$ disjoint from $A$ that intersect $B_{x_i}$.
    

    We now sketch the next steps of the proof, see also \Cref{fig:gadget_graph}. First, for each $i \in [d']$ we modify the graph $G[B_{x_i}]$ to obtain an auxiliary graph $G_i^*$ that is $(\alpha(t),\alpha(t),2\alpha(t)+2,0)$-almost-embeddable. Then, we carefully project the family $\cgF'_i$ into $G_i^*$. In particular, when two sets from $\cgF'_i$ intersect, their projections will intersect as well. 
    Next, we will apply \Cref{lemma:helly_property_tree_decomposition_almost_embeddable} to the auxiliary graph to obtain a hitting set for the projected $\cgF'_i$ being a union of a small number of geodesics in $G_i^*$. Finally, we will lift the hitting set to the initial graph,  perhaps adding some more geodesics. Taking the union of hitting sets over all $i \in [d']$, we will finish the proof.

    Fix some $i \in [d']$, let $B = B_{x_i} - A$, and let $\cgF' = \cgF_i'$. 
    We say that two distinct vertices $u,v \in B$ are \emph{interesting} if $u$ and $v$ are in the same component of $G-A$ and there exists $y \in V(T)$ with $y \neq x_i$ such that $u,v \in B_y$. Let $\cgI$ be the set of all $2$-subsets of vertices in $B$ that are interesting. 
    
    We construct the auxiliary graph $G^*$ as follows. 
    We start the construction with $G[B]$. 
    For all $\{u,v\} \in \cgI$, 
    if $u$ and $v$ are adjacent in $G[B]$, then we call this length-one path $P_{uv}$ or $P_{vu}$; 
    if $u$ and $v$ are not adjacent in $G[B]$, then we add to the graph a path connecting $u$ and $v$ of length $\dist_{G-A}(u,v)$ where all internal vertices are new, i.e.\ disjoint from all the rest. Again, we call this path $P_{uv}$ or $P_{vu}$.
    Moreover, for all $\{u,v\} \in \cgI$, 
    we add to the graph a path connecting $u$ and $v$ of length $\dist_{G-A}(u,v)+1$ where all internal vertices are new. We call this path $P'_{uv}$ or $P'_{vu}$.
    This completes the construction of $G^*$.

    Note that $G^*$ is obtained from  $\torso(G,\cgB,x_i)$ by removing some vertices (from $A$), duplicating and perhaps subdividing some edges. 
    Therefore, by \Cref{lemma:subdivided_almost_embeddable_graph}, the graph $G^*$ is $(\alpha(t),\alpha(t),2\alpha(t)+2,0)$-almost-embeddable.

    Now, we will define a family $\cgF^*$ of connected subgraphs of $G^*$ that is roughly a projection of $\cgF'$ into $G^*$. 
    For a path $P$, let $\inter(P)$ denote the subpath of $P$ induced by all internal vertices of $P$. 
    For every $F \in \cgF'$, define
        \[
        F^* = (F \cap B) \cup \bigcup_{\substack{\{u,v\} \in \cgI\\ u,v \in F}} V(P_{u,v}) \cup \bigcup_{\substack{\{u,v\} \in \cgI\\ u \in F}} V(\inter(P_{u,v}')),
        \]
    and $\cgF^*=\set{F^*\mid F\in\cgF'}$. The following claim captures the critical properties of $\cgF^*$.

    \begin{figure}[!htbp] 
       \centering 
       \includegraphics[scale=1]{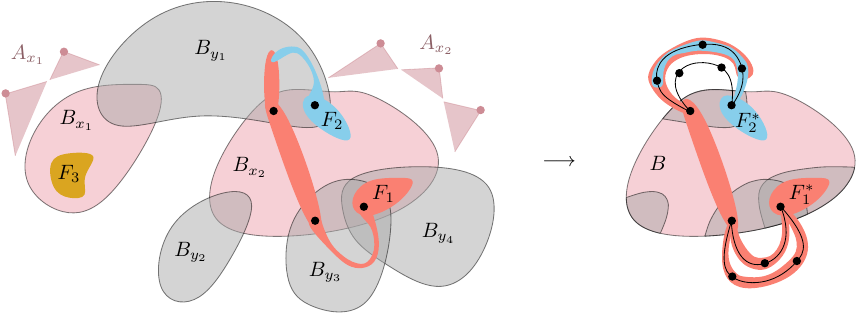} 
       \caption{The left figure depicts a tree-decomposition of a graph $G$.
       By \Cref{lemma:helly_property_tree_decomposition}, there is a small number of bags such that each member of $\cgF$ intersects these bags. 
       These are the bags $B_{x_1}$ and $B_{x_2}$.
       Next, we identify apex vertices (the set $A = A_{x_1} \cup A_{x_2}$).
       We focus on $B = B_{x_2}$ and define $\cgF'$ to be the elements of $\cgF$ that avoid $A$ and intersect $B$. 
       Note that we cannot just restrict the elements of $\cgF'$ to the graph $G[B]$, for two reasons. 
       First, $F \in \cgF'$ restricted to $G[B]$ can be disconnected.
       Second, $F_1,F_2 \in \cgF'$ that intersect in $G$ may no longer intersect when restricted to $G[B]$.
       We depict the two situations in the figure.
       To deal with these problems, we add some paths to the graph to obtain an auxiliary graph $G^*$ and we extend the subgraphs in $\cgF'$ to a family of subgraphs $\cgF^*$ of $G^*$.
       }
       \label{fig:gadget_graph}
    \end{figure}
    
    \begin{claim}
        Let $E,F \in \cgF'$. Then:
        \begin{enumerate}
            \item The graph $G^*[F^*]$ is connected. \label{item:projected_family_connected}
            \item If $E$ intersects $F$ then 
            $E^*$ intersects $F^*$.
            \label{item:projected_family_intersections}
        \end{enumerate}
    \end{claim}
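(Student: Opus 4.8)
**Proof plan for the claim about $\cgF^*$.**

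The plan is to verify the two items separately, starting from the explicit formula defining $F^*$. For item~\ref{item:projected_family_connected}, the key point is that every vertex of $F^*$ can be connected inside $G^*[F^*]$ to a vertex of $F\cap B$. Indeed, $F^*$ is the union of $F\cap B$, some whole paths $P_{u,v}$ with $u,v\in F$, and some interior paths $\inter(P'_{u,v})$ with $u\in F$. A vertex on $P_{u,v}$ (with $\{u,v\}\in\cgI$ and $u,v\in F$) reaches $u\in F\cap B$ along $P_{u,v}$, all of whose vertices lie in $F^*$. A vertex on $\inter(P'_{u,v})$ with $u\in F$ reaches $u$ along $\inter(P'_{u,v})u\subseteq F^*$. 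So it suffices to show $G^*[F\cap B]$ together with the paths $P_{u,v}$ ($u,v\in F$, $\{u,v\}\in\cgI$) is connected; equivalently, that the graph on vertex set $F\cap B$ with an edge for every edge of $G[F\cap B]$ and every pair $\{u,v\}\in\cgI$ with $u,v\in F$ is connected. First I would show this auxiliary graph on $F\cap B$ equals (has the same connectivity as) the graph obtained from $G[F]$ by contracting, for the component structure, the components of $G[F]-B$: $G[F]$ is connected by hypothesis; take any two vertices of $F\cap B$ and a path between them in $G[F]$; each maximal subpath leaving $B$ lies in a single component $D$ of $G[F]-B$, whose two attachment vertices $u,v$ in $B$ lie in a common bag $B_y$ with $y\neq x_i$ (walk up the tree-decomposition: the subtree of bags meeting $V(D)$ is disjoint from $x_i$ since $V(D)\cap B=\emptyset$, so $u,v$ appear together in the neighbouring bag), hence $\{u,v\}\in\cgI$ and $P_{u,v}$ is available. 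Replacing each such detour by the edge/path $P_{u,v}$ yields a walk in $G^*[F^*]$ between the two chosen vertices, proving connectivity.

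For item~\ref{item:projected_family_intersections}, suppose $E\cap F\neq\emptyset$ and pick $w\in E\cap F$. If $w\in B$ then $w\in (E\cap B)\cap(F\cap B)\subseteq E^*\cap F^*$ and we are done. Otherwise $w$ lies in a component $D$ of $G-A-B$ meeting both $E$ and $F$; since $G[E]$ and $G[F]$ are connected and meet $B$ (as $E,F\in\cgF'$), walking out of $B$ into $D$ shows $D$ has an attachment vertex $u\in B\cap E$ and an attachment vertex $v\in B\cap F$ — and in fact, pushing the path in $G[E]$ from $w$ to $B$ and the path in $G[F]$ from $w$ to $B$ through $D$, we may take $u,v$ to be attachments of the \emph{same} component $D$. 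Now $\{u,v\}\in\cgI$ (they lie in a common bag $\neq x_i$ by the same tree-decomposition argument as above — unless $u=v$, in which case $u\in E^*\cap F^*$ trivially). Then the path $P'_{u,v}$ has length $\dist_{G-A}(u,v)+1$, so $\inter(P'_{u,v})$ is non-empty; by definition, since $u\in E$ we have $V(\inter(P'_{u,v}))\subseteq E^*$, and since $v\in F$ we have $V(\inter(P'_{v,u}))=V(\inter(P'_{u,v}))\subseteq F^*$. (This is precisely why the auxiliary $P'$-paths are introduced with interiors included on the \emph{source} side only: two sets sharing a component but no bag-vertex still get a common interior vertex.) Hence $E^*\cap F^*\neq\emptyset$.

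The main obstacle I anticipate is the bookkeeping in the two "walk up the tree-decomposition" arguments: making precise that whenever a component $D$ of $G-A-B$ (equivalently of $G[F]-B$) has two attachment vertices $u,v$ in $B=B_{x_i}-A$, these are interesting, i.e. lie together in some bag $B_y$ with $y\neq x_i$. This uses that the set of tree nodes whose bag meets $V(D)$ induces a subtree of $T$ disjoint from $x_i$, that $u$ and $v$ each lie in some such bag and also in $B_{x_i}$, and that by the subtree/Helly property of tree-decompositions (\Cref{lemma:helly_property_tree_decomposition} in the degenerate form, or just connectivity of the index subtrees) $u$ and $v$ must then share the bag $B_y$ at the node of that subtree closest to $x_i$; one also needs $u,v$ in the same component of $G-A$, which holds because they are joined through $D$. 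Everything else is a direct unfolding of the definition of $F^*$ and of $\inter(\cdot)$.
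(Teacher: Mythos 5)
Your proposal is correct and follows essentially the same route as the paper's proof: item (i) is proved by taking a path in $G[F]$ between two vertices of $F\cap B$, observing that each maximal detour outside $B$ lies in one connected piece avoiding $B_{x_i}$ so that its two attachment vertices share a bag $B_y$ with $y\neq x_i$ (hence form an interesting pair, whose $P_{u,v}$ replaces the detour), and item (ii) is handled exactly as in the paper via the non-empty interior of $P'_{u,v}$, which lies in $E^*$ because $u\in E$ and in $F^*$ because $v\in F$. The only cosmetic differences are that you phrase the connectivity step as a quotient over components of $G[F]-B$ and use the component of $G-A-B$ containing $w$ rather than the concatenated path $\inter(PQ)$; both are the same argument.
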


    \begin{proofclaim}
        Let $u,v \in F^*$. 
        We will show that there is a path from $u$ to $v$ in $G^*[F^*]$ which will prove~\cref{item:projected_family_connected}. 
        If $u\not\in B$ then $u$ lies on one of the added paths in the construction of $G^*$. 
        Since each such path in $F^*$ has at least one endpoint in $B$, we can connect $u$ in $F^*$ with a vertex in $F^*\cap B$. 
        Therefore, we assume that both $u$ and $v$ are in $F^*\cap B$.

        Since $F\in\cgF'$, there is a walk $P$ connecting $u$  and $v$ in $G[F]$. 
        Recall that $F$ is disjoint from $A$, and so is $P$.
        We split $P$ into segments with endpoints in vertices from $B$, i.e., let $w_0,\ldots,w_{\ell}$ be vertices in $V(P)\cap B$ such that 
        \[
        P= w_0Pw_1\cdots Pw_{\ell-1}Pw_{\ell},
        \]
        where $w_0=u$, $w_{\ell}=v$ and $w_{j-1}Pw_{j}$ has no internal vertex in $B$ for each $j\in[\ell]$.
        Note that $w_{j-1}Pw_{j}$ could be just a one-edge path for some $j\in[\ell]$.

        We claim that we can replace each section $w_{j-1}Pw_j$ by a path connecting $w_{j-1}$ and $w_j$ in $G^*[F^*]$.
        Fix $j\in[\ell]$. 
  
        If $w_{j-1}$, $w_j$ are adjacent in $G[B]$, then they are also adjacent in $G^*$, as desired.
        If $w_{j-1}$ and $w_j$ are not adjacent in $G[B]$, 
        the set $X = \{y \in V(T) \mid B_y \cap V(\inter(w_{j-1}Pw_j)) \neq \emptyset\}$ induces a non-empty connected subset of $V(T)$.
        Moreover, since $w_{j-1}$ and $w_j$ are both adjacent to a vertex in $\inter(w_{j-1}Pw_j)$, there are vertices $y,y' \in X$ such that $w_{j-1} \in B_{y}$ and $w_j \in B_{y'}$.
        Since $X \cup \{x_i\}$ is acyclic in $T$, we have $y=y'$, and so
        $w_{j-1},w_j \in B_y$.
        This shows that $\{w_{j-1},w_j\} \in \mathcal{I}$.
        Thus, $P_{w_{j-1},w_j}$ was added to $F^*$ and we can use this path to connect $w_{j-1}$ and $w_j$ in $G^*[F^*]$.
        This way we completed a proof that there is a path from $u$ to $v$ in $G^*[F^*]$.

        Assume that $E, F \in \cgF'$ and that $E$ intersects $F$.
        To prove~\cref{item:projected_family_intersections}, 
        we will show that $E^*\cap F^*$ is non-empty as well.
        Fix $w\in E\cap F$. If $w\in B$, then $w \in E^*\cap F^*$, and we are done. 
        Hence, we suppose that $w\not\in B$.
        
        Let $P$ be a path in $G[E]$ from a vertex $u$ of $B$ to $w$ with no internal vertex in $B$.
        Let $Q$ be a path in $G[F]$ from $w$ to a vertex $v$ of $B$ with no internal vertex in $B$.
        If $u=v$, then $u \in E^* \cap F^*$ and we are done. 
        Otherwise we claim that $\{u,v\} \in \mathcal{I}$.
        Indeed, $\inter(PQ)$ is a non-empty connected subgraph of $G$, and so
        $X = \{x \in V(T)\mid V(\inter(PQ)) \cap B_x \neq \emptyset\}$
        is a non-empty connected subset of $V(T)$.
        Then, since $u$ and $v$ both have a neighbor in $\inter(PQ)$,
        we deduce that $u \in B_y, v \in B_{y'}$ for some $y,y' \in X \cap N_T(x_i)$.
        But since $T[X\cup\set{x_i}]$ is a tree, we must have $y=y'$, and so $u,v \in B_y$. This shows that $\{u,v\} \in \mathcal{I}$.
        Thus, $\inter(P'_{u,v}) \subseteq E^* \cap F^*$ and so $E^* \cap F^* \neq \emptyset$.
    \end{proofclaim}

    By the claim, the family $\cgF^*$ 
    is a family of connected subgraphs of $G^*$ containing no $d$ pairwise vertex-disjoint members. 
    Therefore, by \Cref{lemma:helly_property_tree_decomposition_almost_embeddable}, there exists a subgraph $X^*$ of $G^*$ such that $X^*$ is the union of a family $\cgR^*$ of at most $(d-1)\beta(2\alpha(t) + 2)$ geodesics in $G^*$ and for every $F \in \cgF'$ we have $V(F^*) \cap V(X^*) \neq \emptyset$.
    
    Let $R^* \in \cgR^*$. 
    Note that if one of the endpoints of $R^*$ lies on $\inter(P_{u,v})$ for some $\{u,v\} \in \cgI$, then one can remove $\inter(P_{u,v})$ from $R^*$ maintaining the fact that $\mathcal{R}^*$ is a family of geodesics in $G^*$ whose union of vertex sets intersects every member of $\mathcal{F}^*$. 
    Therefore, now assume without loss of generality that none of $R^* \in \cgR^*$ has an endpoint in the interior of any $P_{u,v}$. We now discuss the relation of geodesics in $G^*$ to the paths $P_{u,v}'$.

    \begin{claim}
        Let $\{u,v\} \in \cgI$.
        No geodesic in $G^*$ contains $P_{u,v}'$ as a subpath.
    \end{claim}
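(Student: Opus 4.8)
The plan is a short \emph{shortcut} argument, of the same flavour as the length count in the proof of \Cref{lemma:geodesic_partition_k_almost_embeddable_graphs}. Suppose for contradiction that some geodesic $Q$ in $G^*$, with endpoints $a$ and $b$, contains $P'_{u,v}$ as a subpath. Since $P'_{u,v}$ occurs as a subpath of $Q$, its two ends $u$ and $v$ lie on $Q$, and $Q$ runs monotonically along $P'_{u,v}$ from one end to the other; after possibly exchanging the names $u$ and $v$ (recall that $\{u,v\}$ is an unordered pair and $P'_{u,v}=P'_{v,u}$), we may assume $Q$ runs from $u$ to $v$ along $P'_{u,v}$, so that $Q = aQu\,P'_{u,v}\,vQb$, where each of the flanking subwalks $aQu$ and $vQb$ is allowed to be a single vertex.

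Next I would invoke the construction of $G^*$: besides $P'_{u,v}$, which has length $\dist_{G-A}(u,v)+1$, the graph $G^*$ also contains a $u$--$v$ path $P_{u,v}$ of length exactly $\dist_{G-A}(u,v)$ (this is the single edge $uv$ when $u$ and $v$ are adjacent in $G[B]$, and a path through fresh internal vertices otherwise). Replacing the middle part of $Q$ by $P_{u,v}$ yields the walk $W := aQu\,P_{u,v}\,vQb$ from $a$ to $b$ in $G^*$; note that no internal disjointness is needed here since $W$ is only claimed to be a walk. Its length is $\len(W) = \len(Q) - \len(P'_{u,v}) + \len(P_{u,v}) = \len(Q) - 1$. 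As the distance between two vertices is at most the length of any walk joining them, $\dist_{G^*}(a,b) \le \len(W) = \len(Q) - 1 < \len(Q) = \dist_{G^*}(a,b)$, a contradiction. Hence no geodesic of $G^*$ contains $P'_{u,v}$ as a subpath.

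I do not expect a genuine obstacle. The only two points deserving a word are (i) that ``subpath'' forces $Q$ to traverse $P'_{u,v}$ monotonically between its endpoints, which legitimises the decomposition $Q = aQu\,P'_{u,v}\,vQb$; and (ii) that $P_{u,v}$ is present in $G^*$ with the stated length, which is immediate from the definition of $G^*$. Everything else is the routine remark that detouring through an already-available, strictly cheaper parallel $u$--$v$ path contradicts the minimality of $Q$.
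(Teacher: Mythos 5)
Your argument is correct and is essentially identical to the paper's proof: both replace the occurrence of $P'_{u,v}$ inside the putative geodesic by the strictly shorter parallel path $P_{u,v}$, obtaining a shorter walk between the same endpoints and hence a contradiction. You merely spell out the length bookkeeping ($\len(P'_{u,v})=\len(P_{u,v})+1$) and the decomposition of the geodesic, which the paper leaves implicit.
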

    \begin{proofclaim}
        Let $R^*$ be a geodesic in $G^*$. Suppose that it contains $P_{u,v}'$ as a subpath. Then replacing the segment corresponding to $P_{u,v}'$ in $R^*$ with $P_{u,v}$ gives a shorter walk between endpoints of $R^*$ in $G^*$, which is a contradiction.
    \end{proofclaim}

    We need the following easy observation. 

    \begin{claim}
        For all $u,v \in B$, we have
            $\dist_{G-A}(u,v) = \dist_{G^*}(u,v)$.
        Moreover, if $R^*$ is a geodesic in $G^*$ connecting $u$ and $v$, then there exists a geodesic $R$ in $G-A$ connecting $u$ and $v$ such that $V(R^*) \cap B \subset V(R) \cap B$.
    \end{claim}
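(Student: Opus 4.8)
The plan is to establish the two inequalities $\dist_{G^*}(u,v)\le\dist_{G-A}(u,v)$ and $\dist_{G-A}(u,v)\le\dist_{G^*}(u,v)$ separately, and to read off the ``moreover'' part from the analysis of the second one. For the first inequality I would take a geodesic $P$ of $G-A$ from $u$ to $v$ and cut it at its vertices in $B$, writing $P=w_0Pw_1\cdots Pw_\ell$ with $w_0=u$, $w_\ell=v$, where each segment $w_{j-1}Pw_j$ has no internal vertex in $B$; each such segment, being a subpath of a geodesic, is a geodesic of $G-A$, so has length $\dist_{G-A}(w_{j-1},w_j)$. If $w_{j-1}w_j\in E(G[B])$ the segment is a single edge, which lies in $G^*$ and I keep it; otherwise the segment has length $\ge 2$, its interior $Z_j$ is non-empty, and I claim $\{w_{j-1},w_j\}\in\cgI$, so that the path $P_{w_{j-1}w_j}$ of length $\dist_{G-A}(w_{j-1},w_j)$ was added to $G^*$ and can replace the segment. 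Concatenating these replacements gives a walk of $G^*$ from $u$ to $v$ of length exactly $\dist_{G-A}(u,v)$, so $\dist_{G^*}(u,v)\le\dist_{G-A}(u,v)$.

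The one non-routine point — and the main obstacle — is verifying $\{w_{j-1},w_j\}\in\cgI$ when $w_{j-1}w_j\notin E(G[B])$. Since $P$ avoids $A$, the vertices $w_{j-1},w_j$ lie in a common component of $G-A$, so it only remains to find $y\ne x_i$ with $w_{j-1},w_j\in B_y$. The interior $Z_j$ avoids $A$ and $B$, hence avoids $B_{x_i}=B\cup(B_{x_i}\cap A)$; therefore the set $T_{Z_j}=\{y\in V(T)\mid B_y\cap V(Z_j)\ne\emptyset\}$ is a subtree of $T$ (it is a connected union of the subtrees indexed by the vertices of the connected graph $Z_j$) not containing $x_i$. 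Both $w_{j-1}$ and $w_j$ have a neighbour on $Z_j$ (the first, resp.\ last, internal vertex of the segment), so each of them lies in a bag indexed by some node of $T_{Z_j}$. Since the set of nodes whose bag contains $w_{j-1}$ is a subtree of $T$ containing $x_i$ and a node of $T_{Z_j}$, it contains the unique ``gateway'' node $q\in T_{Z_j}$ that is closest to $x_i$ in $T$ (every path from $x_i$ into the subtree $T_{Z_j}$ passes through $q$); the same applies to $w_j$. Hence $w_{j-1},w_j\in B_q$ with $q\ne x_i$, so $\{w_{j-1},w_j\}\in\cgI$.

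For the reverse inequality and the ``moreover'' statement I would start from a geodesic $R^*$ of $G^*$ from $u$ to $v$. The key structural observation is that the internal vertices of every added path $P_{u'v'}$ (with $u'v'\notin E(G[B])$) and $P'_{u'v'}$ are new and have degree $2$ in $G^*$, so $R^*$ either avoids the interior of such a path entirely or traverses the whole path between its two endpoints in $B$; and since $\len(P'_{u'v'})=\dist_{G-A}(u',v')+1>\dist_{G-A}(u',v')=\len(P_{u'v'})$, a geodesic of $G^*$ never uses a $P'$-path, as the traversal could be replaced by $P_{u'v'}$ to obtain a strictly shorter $u$--$v$ walk. Consequently, cutting $R^*$ at its vertices $z_0=u,z_1,\dots,z_k=v$ in $B$, each segment $z_{m-1}R^*z_m$ is either a single edge of $G[B]$ or the whole of a path $P_{z_{m-1}z_m}$ with $\{z_{m-1},z_m\}\in\cgI$; in both cases its length equals $\dist_{G-A}(z_{m-1},z_m)$. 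Replacing each segment by a geodesic $R_m$ of $G-A$ between its ends yields a $u$--$v$ walk $W=R_1\cdots R_k$ in $G-A$ with $\len(W)=\sum_m\dist_{G-A}(z_{m-1},z_m)=\len(R^*)=\dist_{G^*}(u,v)$; hence $\dist_{G-A}(u,v)\le\dist_{G^*}(u,v)$, and together with the first inequality this gives $\dist_{G-A}(u,v)=\dist_{G^*}(u,v)$. Finally $\len(W)=\dist_{G-A}(u,v)$ is the minimum possible length of a $u$--$v$ walk in $G-A$, so $W$ has no repeated vertex (a repetition could be shortcut to a strictly shorter walk) and is therefore a geodesic $R$ of $G-A$; since $z_0,\dots,z_k$ all lie on $W$, we get $V(R^*)\cap B=\{z_0,\dots,z_k\}\subseteq V(R)\cap B$, as required.
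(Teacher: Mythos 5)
Your proof is correct and follows essentially the same route as the paper's: split a path at its vertices in $B$ and replace each $B$-free segment by the corresponding added path $P_{w_{j-1}w_j}$ (resp.\ by a geodesic of $G-A$) in the two directions. You in fact supply details the paper leaves implicit or truncated, namely the tree-decomposition ``gateway'' argument showing $\{w_{j-1},w_j\}\in\cgI$ and the derivation of the ``moreover'' statement from $\len(W)=\dist_{G-A}(u,v)$.
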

    \begin{proofclaim}
        Let $u,v \in B$ and let $P$ be a path between $u$ and $v$ in $G-A$. We will show that there exists a path $P^*$ between $u$ and $v$ in $G^*$ of length at most the length of $P$. Let $w_0,\dots,w_\ell \in B$ and let $P_1,\dots,P_\ell$ be (possibly empty) paths in $G-A-B$ such that 
        \[
            P = w_0 P_1 w_1 P_2 \dots P_\ell w_\ell
        \]
        with $w_0 = u$ and $w_\ell=v$.
        Let $j \in [\ell]$. 
        If $P_j$ is an empty path, then let $P_j^*$ be also an empty path.
        Otherwise, $\set{w_{j-1},w_j}$.
        It follows that $P_{w_{j-1},w_j} \subset G^*$ and moreover, $\len(\mathrm{int}(P_{w_{j-1},w_j})) \leq  \len(P_j)$.
        Define $P_j^* = P_{w_{j-1},w_j}$. 
        Let $P^*$ be the walk defined by
        \[
            P^* = w_0 P_1^* w_1 \dots P_\ell^* w_\ell.
        \]
        Clearly, $P^*$ is a walk between $u$ and $v$ in $G^*$, and $\len(P^*) \leq \len(P)$.
        This shows that $\dist_{G^*}(u,v) \leq \dist_{G-A}(u,v)$.

        Now, let $P^*$ be a path between $u$ and $v$ in $G^*$. Let $w_0,\dots,w_\ell \in B$ and let $P_1^*,\dots,P_\ell^*$ be (possibly empty) paths in $G^*-B$ such that 
        \[
            P^* = w_0 P_1^* w_1 \dots P_\ell^* w_\ell
        \]
        with $u=w_0$ and $v=w_\ell$.
        If $P_j^*$ is an empty path, then let $P_j$ be also an empty path.
        Otherwise, by definition, it is clear that $\dist_{G-A}(w_{j-1},w_j) \leq \len(P_j^*)$. Let $P_j$ be any shortest path between $w_{j-1}$ and $w_j$ in $G-A$. 
        Let $P$ be the walk defined by
        \[
            P = w_0 P_0 w_1 \dots P_\ell w_\ell.
        \]
        Clearly, $P$ is a walk between $u$ and $v$ in $G-A$, and $\len(P) \leq \len(P^*)$. 
        This shows that $\dist_{G-A}(u,v) \leq \dist_{G^*}(u,v)$.
        
        Moreover, if $P^*$ is a geodesic in $G^*$, then
        $P$ is a geodesic in $G-A$ with $V(P^*) \cap B \subseteq V(P) \cap B$.
    \end{proofclaim}

    Let $\cgS$ be the collection of all the paths of the form $\mathrm{int}(P_{u,v}')$ in $G^*$ -- note that all such paths are nonempty.
    It follows that for every $R^* \in \cgR$, the geodesic $R^*$ intersects at most two distinct members of $\cgS$, and so, we can write that $R^*$ is a concatenation of $S_1$, $R_0^*$, and $S_2$, where $S_1$ and $S_2$ are subpaths of paths in $\cgS$ each, and $R_0^*$ is disjoint from $\bigcup_{S \in \cgS} V(S)$. 
    Clearly, $R_0^*$ is a geodesic in $G^*$, and moreover, it connects vertices in $B$. We aim to replace each geodesic $R^* \in \cgR$ with at most three geodesics in $G$ maintaining the property that the union of all constructed geodesics intersects every member of $\mathcal{F}'$. 

    For technical reasons, we assume that the empty path is a geodesic.

    \begin{claim}
        Let $R^* \in \cgR$. There exist at most three geodesics $F_{R^*}^0,F_{R^*}^1,F_{R^*}^2$ in $G$ such that for every $F \in \cgF'$, if $F^* \cap V(R^*) \neq \emptyset$ then $F \cap V(F_{R^*}^j) \neq \emptyset$ for some $j \in \{0,1,2\}$.
    \end{claim}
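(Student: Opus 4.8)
The plan is to read the three geodesics off the decomposition $R^* = S_1\, R_0^*\, S_2$ already in hand — one geodesic per piece — and then to check that whenever $F^*$ meets $R^*$, the geodesic attached to the piece containing the intersection point meets $F$ itself. Concretely, for the middle piece: $R_0^*$ is a geodesic of $G^*$ joining two vertices $p,q\in B$, so the distance claim proved just above furnishes a geodesic $R_0$ in $G-A$ from $p$ to $q$ with $V(R_0^*)\cap B\subseteq V(R_0)\cap B$, and I set $F^0_{R^*}=R_0$. For $S_1$: if nonempty it is a subpath of $\inter(P'_{a_1,b_1})$ for a unique interesting pair $\{a_1,b_1\}$ (unique because the internal vertices of distinct gadget paths are pairwise disjoint), and since $\{a_1,b_1\}\in\cgI$ forces $a_1,b_1$ into a common component of $G-A$, I let $F^1_{R^*}$ be a shortest $a_1$–$b_1$ path in $G-A$; symmetrically $F^2_{R^*}$ comes from $S_2$, and empty pieces contribute the empty geodesic. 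All three are geodesics of $G-A$, which is the form actually needed for \Cref{lem:X:geodesics}.

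For the verification I would first record two bookkeeping facts about $F^*$: (i) $F^*\cap B = F\cap B$, since every gadget path $P_{u,v}$ placed into $F^*$ has both of its $B$-vertices, namely its endpoints, in $F$, while each $\inter(P'_{u,v})$ contributes no vertex of $B$; and (ii) an internal vertex of $P'_{a,b}$ lies in $F^*$ only if $\{a,b\}\cap F\neq\emptyset$, and an internal vertex of $P_{u,v}$ lies in $F^*$ only if $\{u,v\}\subseteq F$ (again by disjointness of the inserted vertices of distinct gadget paths). Now, given $F\in\cgF'$ with $F^*\cap V(R^*)\neq\emptyset$, pick $w\in F^*\cap V(R^*)$ and split into cases according to whether $w$ lies on $S_1$, on $R_0^*$, or on $S_2$. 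If $w\in V(S_1)$, then $w$ is internal to $P'_{a_1,b_1}$, so by (ii) some endpoint of $\{a_1,b_1\}$ is in $F$, and $F^1_{R^*}$ contains both endpoints, so it meets $F$; the $S_2$-case is symmetric. If $w\in V(R_0^*)\cap B$, then $w\in F$ by (i) and $w\in V(R_0^*)\cap B\subseteq V(F^0_{R^*})$. If $w\in V(R_0^*)\setminus B$, then $w$ is internal to some gadget path, and since $R_0^*$ avoids every $\inter(P'_{u,v})$, this gadget is a $P_{u,v}$, so $\{u,v\}\subseteq F$ by (ii); since the internal vertices of $P_{u,v}$ have degree $2$ in $G^*$ while $R_0^*$ has both endpoints $p,q$ in $B$, the path $R_0^*$ must run through the whole of $P_{u,v}$, whence $u\in V(R_0^*)\cap B\subseteq V(F^0_{R^*})$.

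I expect the only slightly delicate point to be that last sub-case — that a geodesic of $G^*$ which enters the interior of a $P$-gadget is forced to traverse the whole gadget, so that its $B$-endpoints $u,v$ (which lie in $F$) are captured by $V(R_0^*)$. This is the degree-$2$ tracing argument, combined with the facts, already recorded before the claim, that $R_0^*$ avoids the $P'$-interiors and has both of its endpoints in $B$. Everything else is routine case analysis layered on the decomposition of $R^*$ and the distance claim already established.
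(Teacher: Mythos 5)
Your proof is correct and follows essentially the same route as the paper's: split $R^*$ into $S_1,R_0^*,S_2$, lift $R_0^*$ to a geodesic $R_0$ in $G-A$ via the preceding distance claim, and handle the $S_j$ pieces through the endpoints of the corresponding interesting pair. The only differences are cosmetic — the paper takes $F^j_{R^*}$ to be the singleton $\{u_j'\}$ (using that the other endpoint $u_j$ already lies on $R_0$) where you take a full $a_j$–$b_j$ geodesic, and you spell out the degree-two tracing argument for why $F^*\cap V(R_0^*)\neq\emptyset$ forces $F^*\cap B\cap V(R_0^*)\neq\emptyset$, a step the paper compresses into ``by construction of $F^*$.''
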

    \begin{proofclaim}
        Let $S_1,S_2,R_0^*$ be a partition of $R^*$ as described above.
        Let $u_1$ and $u_2$ be the endpoints of $R_0^*$. 
        By the previous claim, there exists a geodesic $R_0$ connecting $u_1$ and $u_2$ such that $V(R^*_0) \cap B \subset V(R_0) \cap B$. Put $F_{R^*}^0 = R_0$. Let $j \in \{1,2\}$. 
        If $S_j$ is an empty path, then set $F_{R^*}^j$ to be an empty path. 
        Otherwise, $S_j$ is a segment of the path $P_{u_j,u_j'}'$ for some $u_j' \in B$ such that $\{u_j, u_j'\} \in \mathcal{I}$. 
        In this case, set $F_{R^*}^j$ to be the one-vertex path containing $u_j'$.

        Clearly, $F_{R^*}^0,F_{R^*}^1,F_{R^*}^2$ are geodesics in $G-A$. 
        We now prove that they satisfy the assertion of the claim. 
        
        Let $F \in \cgF'$ be such that $F^* \cap V(R^*) \neq \emptyset$.
        Thus, either $F^* \cap V(S_j) \neq \emptyset$ for some $j \in \{1,2\}$, or $F^* \cap V(R_0^*) \neq \emptyset$.
        If $F^* \cap V(S_j) \neq \emptyset$ for some $j \in \{1,2\}$,
        then by the construction of $F^*$, either $u_j \in F$ or $u'_j \in F$.
        In the first case $F \cap V(F_{R^*}^0) \neq \emptyset$,
        and in the second case $F \cap V(F_{R^*}^j) \neq \emptyset$.

        It remains to deal with the case when $F^* \cap V(R_0^*) \neq \emptyset$. 
        By construction of $F^*$ we have $F^* \cap B \cap V(R_0^*) \neq \emptyset$.
        However $V(R^*_0) \cap B \subset V(R_0) \cap B$ and $F^* \cap B = F \cap B$. Therefore,
        \[ 
            F \cap V(F^0_{R^*}) \supseteq 
            F \cap V(R_0) \supseteq
            F \cap B \cap V(R_0) \supseteq 
            F^* \cap B \cap V(R_0) \supseteq 
            F^* \cap B \cap V(R_0^*) \neq
            \emptyset.
        \]
    \end{proofclaim}

    Finally, define
    \[
        X_i = \bigcup_{R^* \in \cgR} F_{R^*}^0\cup F_{R^*}^1\cup F_{R^*}^2.
    \]
    It follows that for each $i \in [d']$, the subgraph $X_i$ is the union of at most $3|\cgR|\leq 3(d-1)\beta(2\alpha(t) + 2)$ geodesics. Let $X = \bigcup_{i\in [d']} X_i$. For every $F \in \cgF$ we have $F \cap (X \cup A) \neq \emptyset$. Moreover, $X$ is a union of at most $3(d-1)^2\beta(2\alpha(t) + 2)$ geodesics in $G-A$. This proves the lemma with $\delta(t) = 3\beta(2\alpha(t) + 2)$.    
\end{proof}

\section{Excluding an Apex Graph}
\label{sec:excluding_an_apex}

Recall that a graph $G$ is \emph{apex} if there is a vertex $v\in V(G)$ such that $G-v$ is planar. For a given apex graph $X$, let $t(X)$ be the minimum integer such that, for some integer $c$, every $X$-minor-free graph is isomorphic to a subgraph of $H\boxtimes P\boxtimes K_c$ where $\tw(H)\leq t(X)$ and $P$ is a path. In this section, we show that $t(X)$ is tied to the treedepth of $X$.

A tree-decomposition $(T,(B_x\mid x\in V(T)))$ of a graph is \emph{rooted} when 
$T$ is a rooted tree.
For a rooted tree-decomposition $\mathcal{B} = (T,(B_x \mid x\in V(T)))$ of a graph $G$, let $\torso^-(G,\mathcal{B},x)$ be the supergraph of $G[B_x]$ obtained by adding all edges $uv$ with $u,v \in B_x \cap B_y$ and $x$ is the parent of $y$ in $T$.
We use the following result of Dujmović, Esperet, Morin, and Wood \cite{DEMW23}.


\begin{thm}[Theorem~48 in~\cite{DEMW23}]\label{apex_minor_free_structure}
For every apex graph $X$, there exist positive integers $w,t$ such that every $X$-minor-free graph $G$ has a rooted tree-decomposition $\cgB=(T,(B_x \mid x\in V(T)))$ of adhesion at most $3$, and for each $x\in V(T)$, there exists a layered partition $(\mathcal{P}_x,\mathcal{L}_x)$ of $\torso^-(G,\cgB,x)$ with:
\begin{enumerate}
    \item $|P\cap L|\le w$ for each $(P,L)\in\mathcal{P}_x\times\mathcal{L}_x$; \label{item:apex_minor_free_structure:layered_width}
    \item if $x$ has a parent $y$ in $T$, then
    \begin{enumerateAlpha}
        \item all vertices in $B_x\cap B_y$ are in the first layer of $\mathcal{L}_x$,\label{item:apex_minor_free_structure:interface_in_the_first_layer}
        \item each vertex of $B_x\cap B_y$ is in a singleton part of $\mathcal{P}_x$; and\label{item:apex_minor_free_structure:singleton_parts}
    \end{enumerateAlpha} \label{item:apex_minor_free_structure:parents_properties}
    \item $\torso^-(G,\cgB,x)/\mathcal{P}_x$ is a minor of $G$ and has treewidth less than $t$.\label{item:apex_minor_free_structure:Qx_minor_of_G}
\end{enumerate}
\end{thm}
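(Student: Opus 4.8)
The plan is to derive \Cref{apex_minor_free_structure} from the Graph Minor Structure Theorem together with the product structure theorem for bounded-genus graphs, using the apex hypothesis to remove apex vertices, and then to rework the decomposition so that its adhesion drops to $3$ and the boundary conditions in item~\ref{item:apex_minor_free_structure:parents_properties} hold.

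First I would invoke the \emph{apex-free} strengthening of \Cref{thm:robertson-seymour-decomposition}: since $X$ is apex, there is a constant $k=k(X)$ such that every $X$-minor-free graph $G$ has a tree-decomposition $\cgB_0=(T_0,(B^0_x\mid x\in V(T_0)))$ of bounded adhesion in which every torso $\torso(G,\cgB_0,x)$ is $(k,k,k,0)$-almost-embeddable, i.e.\ has \emph{no} apex vertices. This is the point where ``$X$ is apex'' is used: if a torso had an apex vertex $v$ and still carried genuine structure, its embedded part would have large treewidth and hence contain a large wall $W$; then $v$ together with $W$ would realise an $X$-minor, because $X$ is a minor of $\{u\}\oplus W$ once $W$ is large enough, a contradiction. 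One may take this refinement as known (Robertson--Seymour; Demaine--Hajiaghayi--Kawarabayashi) or recover it from \Cref{thm:robertson-seymour-decomposition} by a clean-up argument. Rooting $T_0$ arbitrarily makes $\cgB_0$ rooted, and $\torso^-(G,\cgB_0,x)$, being a subgraph of $\torso(G,\cgB_0,x)$, is $(k,k,2k,0)$-almost-embeddable.

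Next, for each $x\in V(T_0)$ I would construct the layered partition $(\mathcal{P}_x,\mathcal{L}_x)$ of $G_x:=\torso^-(G,\cgB_0,x)$ in the spirit of \Cref{lemma:geodesic_partition_k_almost_embeddable_graphs}, but tracking layers. Write $G_x=G_0\cup G_1\cup\dots\cup G_s$ with $G_0$ embedded in a surface of Euler genus at most $2k$ and $G_1,\dots,G_s$ vortices of width at most $2k$ attached along disjoint discs $\Delta_1,\dots,\Delta_s$. Apply the bounded-genus product structure theorem (the layered strengthening of \Cref{theorem:geodesic_partition_bounded_genus} from~\cite{DJMMUW20,DHHW22}) to $G_0$ to get a layered partition $(\mathcal{Q},\mathcal{L}_0)$ with $\mathcal{L}_0$ a BFS layering, $|Q\cap L|\le 3$ for all $(Q,L)$, and $\tw(G_0/\mathcal{Q})$ bounded by a constant; then fold each vortex $G_i$ into $(\mathcal{Q},\mathcal{L}_0)$, distributing its vertices among the parts and layers that already meet $\Delta_i$, at the cost of raising the cell width and the quotient treewidth to constants depending only on $k$. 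Every part of the resulting $\mathcal{P}_x$ is connected, and every edge of $G_x$ absent from $G$ is witnessed by a path through the rest of $G$ whose interior can be absorbed, so $G_x/\mathcal{P}_x$ is a minor of $G$; this establishes items~\ref{item:apex_minor_free_structure:layered_width} and~\ref{item:apex_minor_free_structure:Qx_minor_of_G} with suitable constants $w$ and $t$.

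The step I expect to be the main obstacle is the combination of reducing the adhesion of $\cgB_0$ to at most $3$ and arranging conditions \ref{item:apex_minor_free_structure:interface_in_the_first_layer}--\ref{item:apex_minor_free_structure:singleton_parts} of item~\ref{item:apex_minor_free_structure:parents_properties}. For the latter: run the BFS layering of each $G_x$ from a new dummy vertex adjacent to exactly the (at most $3$) vertices of the parent-adhesion $B_x\cap B_y$, so that after deleting the dummy vertex these vertices lie in the first layer, and then split each of them off from its geodesic part into a singleton part, which keeps the remainder a subgeodesic and increases $\tw(G_x/\mathcal{P}_x)$ by at most the adhesion. For the adhesion reduction: the adhesion sets of $\cgB_0$ are cliques of the almost-embeddable torsos, and such a clique lies either essentially inside a single vortex bag or in the embedded part; in either case, using the layered partition of the parent torso together with the fact that separators of the bounded-treewidth quotient $G_x/\mathcal{P}_x$ pull back to small separators of $G$, one can refine $\cgB_0$ so that every adhesion set has size at most $3$ --- the difficulty being entirely about how the near-embeddable pieces are glued together, since when $G$ is planar the whole decomposition is already trivial. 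As every modification above inflates the constants only boundedly, the required $w$ and $t$ exist.
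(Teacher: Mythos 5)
First, a point of orientation: the paper does not prove this statement — it is imported verbatim as Theorem~48 of \cite{DEMW23} and used as a black box in \Cref{sec:excluding_an_apex} — so you are attempting to reprove a substantial external theorem. Your overall architecture (Graph Minor Structure Theorem, plus the layered/geodesic product structure for bounded genus, plus folding in vortices) is indeed the right family of ideas, but two key steps fail as written. The first is your opening step: the claim that for apex $X$ every $X$-minor-free graph has a tree-decomposition whose torsos are $(k,k,k,0)$-almost-embeddable, i.e.\ with \emph{no} apex vertices, is false, and your justification does not repair it. An apex vertex $v$ of an almost-embeddable torso is just a vertex deleted before embedding, with no constraint on its adjacencies; a large wall $W$ in the embedded part gives you $\{v\}\oplus W$ (and hence $X$) as a minor only if $v$ attaches to many branch sets of $W$ spread across the surface, which need not hold. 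The correct strengthening available for apex-minor-free graphs (Dvořák--Thomas, in the form used in \cite{DJMMUW20,DEMW23}) is weaker: apex vertices may be assumed adjacent only to vertices of the vortices. They do not disappear, and they must be carried through the rest of the construction (absorbed into enlarged vortices or into distinguished parts), which your plan never does.

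The second gap is the adhesion-$3$ reduction, which you flag as the main obstacle and then dispatch with the assertion that separators of the bounded-treewidth quotient $\torso^-(G,\cgB,x)/\mathcal{P}_x$ ``pull back to small separators of $G$''. They do not: a separator of size $s$ in the quotient pulls back to the union of $s$ parts of $\mathcal{P}_x$, and although each part meets each layer in at most $w$ vertices, the number of layers is unbounded, so the pulled-back separator has unbounded size. In the actual proof of Theorem~48 the adhesion-$3$ decomposition is not obtained by post-hoc surgery on the Graph Minor Structure Theorem decomposition; it is constructed recursively together with the layered partitions, the adhesion sets arising as small interfaces of the BFS-layer peeling inside each almost-embeddable piece, and conditions \ref{item:apex_minor_free_structure:interface_in_the_first_layer}--\ref{item:apex_minor_free_structure:singleton_parts} fall out of that recursion (your dummy-vertex trick for placing $B_x\cap B_y$ in the first layer is fine in isolation, but it presupposes the adhesion bound you have not established). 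Since neither of these two steps can be completed as described, the proposal does not constitute a proof.
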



The next result proves the upper bound in \eqref{ApexStructure}.

\begin{thm} 
\label{exluded-apex-graph}
    For every apex graph $X$, there exists a positive integer $c$ such that  for every $X$-minor-free graph $G$,
    there exists a graph $H$ of treewidth at most $2^{\td(X)+1}-1$ such that $G\subsetsim H\boxtimes P \boxtimes K_c$ for some path $P$. 
\end{thm}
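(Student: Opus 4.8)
The strategy is to combine the structural decomposition of apex-minor-free graphs from \Cref{apex_minor_free_structure} with \Cref{XMinorFreeProduct} applied to each torso, then stitch the resulting layered partitions together along the tree-decomposition. Fix an apex graph $X$ and let $w,t$ be the constants from \Cref{apex_minor_free_structure}. Let $G$ be $X$-minor-free, and take the rooted tree-decomposition $\cgB = (T,(B_x \mid x\in V(T)))$ of adhesion at most $3$ with the layered partitions $(\cgP_x,\cgL_x)$ of $\torso^-(G,\cgB,x)$ as in the theorem. The key point is item~\ref{item:apex_minor_free_structure:Qx_minor_of_G}: the quotient $Q_x := \torso^-(G,\cgB,x)/\cgP_x$ is a minor of $G$, hence $X$-minor-free, and has treewidth less than $t$. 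So we may apply \Cref{XMinorFreeProduct} to $Q_x$: there is a universal constant $c_0 = c_0(X)$ and a graph $H_x$ with $\tw(H_x) \le 2^{\td(X)+1}-4$ such that $Q_x \subsetsim H_x \boxtimes K_{c_0 t}$, i.e.\ (by \Cref{ObsPartitionProduct}) $Q_x$ has an $H_x$-partition $(U^x_y \mid y\in V(H_x))$ of width at most $c_0 t$.

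\textbf{From a partition of $Q_x$ to a layered partition of $\torso^-(G,\cgB,x)$.} Composing the two partitions: each vertex $v$ of $\torso^-(G,\cgB,x)$ lies in a unique part $P\in\cgP_x$, which in turn is a vertex of $Q_x$ lying in a unique part $U^x_y$; assign $v$ to a new part indexed by $y$. Since adjacency in $\torso^-(G,\cgB,x)$ projects to adjacency or equality in $Q_x$, this is again an $H_x$-partition, call it $(V^x_y \mid y \in V(H_x))$. Together with the layering $\cgL_x$ this gives a layered partition of $\torso^-(G,\cgB,x)$, and by item~\ref{item:apex_minor_free_structure:layered_width} each part of $\cgP_x$ meets each layer of $\cgL_x$ in at most $w$ vertices, so $|V^x_y \cap L| \le (c_0 t)\cdot w =: c_1$ for every $y$ and $L$. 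By \Cref{ObsLayeredPartitionProduct}, $\torso^-(G,\cgB,x) \subsetsim H_x \boxtimes P_x \boxtimes K_{c_1}$ for some path $P_x$. The crucial feature for the gluing, supplied by item~\ref{item:apex_minor_free_structure:parents_properties}, is that when $x$ has parent $y$, every vertex of the interface $B_x\cap B_y$ (at most $3$ of them) is a singleton part of $\cgP_x$ and lies in the first layer of $\cgL_x$; hence in the layered partition of $\torso^-(G,\cgB,x)$ these $\le 3$ vertices occupy $\le 3$ distinct parts $V^x_\bullet$, all intersecting only the first layer.

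\textbf{Stitching along $T$.} Now build the global graph $H$ by processing $T$ from the root down. For each node $x$, the interface $B_x\cap B_{\mathrm{parent}(x)}$ is a set of $\le 3$ vertices each already placed (from the parent's layered partition) into $\le 3$ distinct parts; we must identify the corresponding $\le 3$ parts of $H_x$ with those already-existing parts. Because the quotient torsos have adhesion sets that are cliques (the torso operation makes $B_x\cap B_y$ a clique, and this clique survives in $H_x$ up to the partition), the $\le 3$ relevant parts of $H_x$ form a clique in $H_x$, so the identification is a clique-sum; thus $\tw(H) = \max_x \tw(H_x) \le 2^{\td(X)+1}-4$ — but we have spare room, as the statement allows $2^{\td(X)+1}-1$. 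In fact the simplest accounting adds at most $3$ apex-type vertices per branch to absorb the interface, giving the stated bound $2^{\td(X)+1}-1$ comfortably. For the layerings: align the path $P_x$ of each child so that its first layer coincides with the layer of $P_{\mathrm{parent}(x)}$ containing the interface (possible since the interface sits in the first layer of $\cgL_x$ by~\ref{item:apex_minor_free_structure:interface_in_the_first_layer}), then concatenate/merge the $P_x$'s into a single path $P$ along $T$; standard arguments (as in \cite{DEMW23} or \cite{DJMMUW23}) show this can be done consistently on a tree so that each vertex meets a bounded number of layers. Take $c = c_1$ (or a small multiple, to account for overlaps at interfaces). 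Applying \Cref{ObsLayeredPartitionProduct} in reverse yields $G \subsetsim H\boxtimes P \boxtimes K_c$ with $\tw(H)\le 2^{\td(X)+1}-1$.

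\textbf{Main obstacle.} The routine part is applying \Cref{XMinorFreeProduct} and composing partitions; the delicate part is the global layering. One must choose the offsets of the child paths $P_x$ along a root-to-leaf traversal so that the interfaces match up in a single layer and the total number of layers any vertex sees stays bounded — this is exactly the technical core of tree-decomposition-to-path arguments and is where the adhesion-$\le 3$ and first-layer hypotheses of \Cref{apex_minor_free_structure} are essential. I expect to invoke the layered-partition gluing lemma from \cite{DEMW23} (or adapt its proof) rather than redo it from scratch.
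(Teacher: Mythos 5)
Your proposal is correct and follows essentially the same route as the paper: apply \Cref{apex_minor_free_structure}, run \Cref{XMinorFreeProduct} on each quotient torso (with the $\leq 3$ interface vertices removed and re-added as dominant vertices---exactly your ``3 apex-type vertices per branch'', which accounts for the bound $2^{\td(X)+1}-4+3$), and glue by clique-sums while shifting each child's layering so that its first layer aligns with the parent layer(s) containing the interface. The only imprecision is your initial claim that $\tw(H)=\max_x \tw(H_x)$: \Cref{XMinorFreeProduct} gives no control over where the interface vertices land in $H_x$, so those parts need not form a clique there, which is precisely why the paper (and your corrected accounting) joins them as universal vertices to $H'$ before clique-summing.
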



\begin{proof}
    Let $X$ be an apex graph.
    Let $w,t$ be the constants depending only on $X$ given by \Cref{apex_minor_free_structure}.
    Let $c'$ be the constant depending only on $X$ given by \Cref{XMinorFreeProduct}.
    Let $c = c' \cdot t \cdot w$.

    Let $G$ be an $X$-minor-free graph.
    By \Cref{apex_minor_free_structure}, there is
    a rooted tree-decomposition $\cgB=(T,(B_x \mid x\in V(T)))$ of $G$ and for every $x \in V(T)$ there is a layered partition $(\mathcal{P}_x,\mathcal{L}_x)$ of $\torso^-(G,\mathcal{B},x)$ such that items~\ref{item:apex_minor_free_structure:layered_width}-\ref{item:apex_minor_free_structure:Qx_minor_of_G} hold.

    Let $r$ be the root of $T$. For each vertex $x$ in $T$ with $x\neq r$, let $p(x)$ be the parent of $x$ in $T$. 
    Let $(v_1, \dots, v_{|V(T)|})$ be an ordering of $V(T)$ such that for every edge $v_i v_j$ of $T$, if $v_i=p(v_j)$, then $i<j$.
    For every $i \in[|V(T)|]$, let $G^i$ be the graph obtained from $G[\bigcup_{j \leq i} B_{v_j}]$
    by adding for every $j >i$ with $p(v_j) \in \{v_1, \dots, v_i\}$, all the missing edges with both endpoints in $B_{v_j} \cap B_{p(v_j)}$.
    Next, for each $i\in[|V(T)|]$, we will construct a graph $H^i$, an $H^i$-partition $(V^i_x \mid x \in V(H^i))$ of $G^i$ and a layering $\mathcal{L}^i$ of $G^i$ such that
    \begin{enumerate}
        \item $\tw(H^i) \leq 2^{\td(X)+1}-1$, and
        \item $|V^i_x \cap L| \leq c$ for every $x \in V(H^i)$ and $L \in \mathcal{L}^i$.
    \end{enumerate}

    By \Cref{ObsLayeredPartitionProduct}, this yields $G \subsetsim H^{|V(T)|} \boxtimes P \boxtimes K_c$ for some path $P$, which will complete the proof.
    
    The construction is iterative, starting with $i=1$.    Observe that $v_1=r$ and $G^1 = \torso^-(G,\mathcal{B},r)$. 
    Let $Q=\torso^-(G,\mathcal{B},r)/\mathcal{P}_r$. 
    By \Cref{apex_minor_free_structure}.\ref{item:apex_minor_free_structure:Qx_minor_of_G}, $\tw(Q)<t$ and $Q$ is a minor of $G$, so $Q$ is $X$-minor-free.
    By \Cref{XMinorFreeProduct}, we obtain a graph $H^1$ and an $H^1$-partition $(U_z \mid z \in V(H^1))$ of $Q$
    such that $\tw(H^1) \leq 2^{\td(X)+1}-4$ and $|U_z| \leq c' \cdot t$ for every $z \in V(H^1)$.
    Let $V^1_z = \bigcup_{P \in U_z} P$ for every $z \in V(H^1)$ and $\mathcal{L}^1 = \mathcal{L}_r$. Then $(V^1_z \mid z \in V(H^1))$ is an $H^1$-partition of $G^1$ such that $|V^1_z \cap L| \leq |U_z| \cdot w \leq c' \cdot t \cdot w = c$ for every $z \in V(H^1)$ and $L \in \mathcal{L}^1$.

    Next, let $i>1$, and assume that $H^{i-1}$, $(V^{i-1}_x \mid x \in V(H^{i-1}))$ and $\mathcal{L}^{i-1}$ are already defined.
    Let $x = v_i$, $R = B_x \cap B_{p(x)}$, and  $Z = \{z \in V(H^{i-1}) \mid R \cap V^{i-1}_z \neq \emptyset\}$.
    Note that $R$ is a clique in $G^{i-1}$ and so $Z$ is a clique in $H^{i-1}$.
    Recall that the elements of $R$ are in singleton parts of $\mathcal{P}_x$ by \Cref{apex_minor_free_structure}.\ref{item:apex_minor_free_structure:parents_properties}.\ref{item:apex_minor_free_structure:singleton_parts}.
    Let $Q = \torso^-(G,\mathcal{B},x)/\mathcal{P}_x - \{\{v\} \mid v \in R\}$.
    By \Cref{apex_minor_free_structure}.\ref{item:apex_minor_free_structure:Qx_minor_of_G}, $\tw(Q)<t$ and $Q$ is a minor of $G$, so $Q$ is $X$-minor-free.
    By \Cref{XMinorFreeProduct}, we obtain a graph $H'$ and an $H'$-partition $(U_z \mid z \in V(H'))$ of $Q$
    such that $\tw(H') \leq 2^{\td(X)+1}-4$ and $|U_z| \leq c' \cdot t$ for every $z \in V(H')$.
    Now define $H^i$ to be the clique-sum of $H^{i-1}$ and $Z \oplus H'$
    according to the identity function on $Z$.
    Then $\tw(H^i) = \max\{\tw(H^{i-1}),|Z|+\tw(H')\} \leq 2^{\td(X)+1}-4+3$.
    For every $z \in V(H^i)$ let
    \[
    V^i_z = 
    \begin{cases}
        V^{i-1}_z & \text{if $z \in V(H^{i-1})$,} \\
        \bigcup_{P \in U_z} P & \text{if $z\in V(H')$.}\\    
    \end{cases}
    \]
    Then $(V^i_z \mid z \in V(H^i))$ is an $H^i$-partition of $G^i$.
    It remains to define the layering $\mathcal{L}^i = (L^i_0,L^i_1, \dots)$.
    Let $\mathcal{L}^{i-1} = (L^{i-1}_0,L^{i-1}_1, \dots)$
    and $\mathcal{L}_x = (L^x_0,L^x_1, \dots)$.
    Since $R$ is a clique in $G^{i-1}$, there is a non-negative integer $j$ such that $R \subseteq L^{i-1}_j \cup L^{i-1}_{j+1}$.
    For every non-negative integer $k$, let
    \[
    L^i_k = 
    \begin{cases}
        L^{i-1}_k &\text{if $k < j$,} \\
        L^{i-1}_k \cup (L^x_{k-j} - R) &\text{if $k\geq j$.} \\
    \end{cases}
    \]
    First we show that $\mathcal{L}^i=(L^i_0, L^i_1, \dots)$ is a layering of $G^i$.
    Let $uv$ be an edge of $G^i$. 
    Note that either $uv$ is an edge of $G^{i-1}$ or $uv$ is an edge of $\torso^-(G,\mathcal{B},x)$.  If $uv$ is an edge of $G^{i-1}$ then there is an integer $k$ such that $u,v \in L^{i-1}_k \cup L^{i-1}_{k+1}$, and so $u,v \in L^{i}_k \cup L^{i}_{k+1}$.
    If $uv$ is an edge of $\torso^-(G,\mathcal{B},x)$, then there is an integer $k$ such that $u,v \in L^x_k \cup L^x_{k+1}$.
    Note that in this case $\{u,v\} \not\subset R$.
    If $u,v \not\in R$, then $u,v \in (L^x_k-R) \cup (L^x_{k+1} -R) \subseteq L^i_{j+k} \cup L^i_{j+k+1}$.
    The last case to consider is when $|\{u,v\} \cap R| = 1$.
    Without loss of generality, assume that $u \in R$.
    By \Cref{apex_minor_free_structure}.\ref{item:apex_minor_free_structure:parents_properties}.\ref{item:apex_minor_free_structure:interface_in_the_first_layer}, $u \in R \subset L_0^x$, hence, $v \in L^x_0 \cup L^x_{1}$.
    Moreover, $u \in R \subset  L^{i-1}_{j} \cup L^{i-1}_{j+1}$.
    It follows that $u,v \in L^i_j \cup L^i_{j+1}$.
    This proves that $\mathcal{L}^i$ is a layering of $G^i$.
    
    Finally, for every non-negative integer $k$, and for every $z \in V(H^i)$, either $z \in V(H^{i-1})$ and 
    $|L^i_k \cap V^i_z| = |L^{i-1}_k \cap V^{i-1}_z| \leq c$, or $z \in V(H')$ and $|L^i_k \cap V^i_z| = |L^x_{k-j} \cap \bigcup_{P \in U_z} P| \leq |U_z| \cdot w \leq c' \cdot t \cdot w = c$.
    This concludes the proof.
\end{proof}

Dębski et al.~\cite{DFMS21} proved that if $G\subsetsim H \boxtimes P \boxtimes K_c$ where $\tw(H)\leq t$ and $P$ is a path, then $\chi_p(G) \leq c(p+1)\binom{p+t}{t} \leq c(p+1)^{t+1}$.

\Cref{exluded-apex-graph} thus implies:

\begin{cor}\label{cor:p_centered_coloring_apex}
For every apex graph $X$, every $X$-minor-free graph $G$, and every integer $p\geq 1$,
\[
\chi_p(G) \leq c(p+1)^{2^{\td(X)+1}},
\]
where $c$ is from \Cref{exluded-apex-graph}.
\end{cor}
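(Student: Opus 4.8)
The plan is to combine \Cref{exluded-apex-graph} with the quoted bound of Dębski, Felsner, Micek, and Schröder~\cite{DFMS21} on $p$-centered chromatic numbers of subgraphs of $H\boxtimes P\boxtimes K_c$. First I would fix an apex graph $X$ and let $c$ be the positive integer supplied by \Cref{exluded-apex-graph}, so that every $X$-minor-free graph $G$ satisfies $G\subsetsim H\boxtimes P\boxtimes K_c$ for some path $P$ and some graph $H$ with $\tw(H)\le 2^{\td(X)+1}-1$. Setting $t:=2^{\td(X)+1}-1$, the result of~\cite{DFMS21} gives $\chi_p(G)\le c(p+1)\binom{p+t}{t}$.

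It remains to bound $\binom{p+t}{t}$. The standard estimate $\binom{p+t}{t}\le (p+1)^t$ (each of the $t$ factors $\tfrac{p+i}{i}$ is at most $p+1$, or equivalently $\binom{p+t}{t}$ counts monomials of degree $\le t$ in one variable capped, etc.) yields
\[
\chi_p(G)\le c(p+1)\binom{p+t}{t}\le c(p+1)\cdot(p+1)^{t}=c(p+1)^{t+1}=c(p+1)^{2^{\td(X)+1}},
\]
which is exactly the claimed bound. Since $X$-minor-freeness of $G$ is all that was used, and $c$ depends only on $X$, this holds for every $X$-minor-free graph $G$ and every integer $p\ge 1$.

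There is essentially no obstacle here: the corollary is a one-line consequence of \Cref{exluded-apex-graph} and the cited inequality, and the only arithmetic is the elementary bound $\binom{p+t}{t}\le(p+1)^t$ together with the bookkeeping that $t+1=2^{\td(X)+1}$ when $t=2^{\td(X)+1}-1$. The real content was already delivered in the proof of \Cref{exluded-apex-graph}; the $p$-centered colouring statement is just packaging.
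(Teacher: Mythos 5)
Your proposal is correct and is exactly the paper's argument: the paper likewise applies \Cref{exluded-apex-graph} to obtain $G\subsetsim H\boxtimes P\boxtimes K_c$ with $\tw(H)\leq 2^{\td(X)+1}-1$ and then invokes the bound $\chi_p(G)\leq c(p+1)\binom{p+t}{t}\leq c(p+1)^{t+1}$ of Dębski et al.~\cite{DFMS21}. The elementary estimate $\binom{p+t}{t}\leq (p+1)^t$ and the bookkeeping $t+1=2^{\td(X)+1}$ are exactly as in the paper.
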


\section{Open Questions}
\label{sec:open}
We conclude the paper with a number of open problems. 

\begin{ques}
Can the upper bound on $\utw(\mathcal{G}_X) $ in \Cref{eq:bound_on_f} be improved? In particular, is $\utw(\mathcal{G}_X)$ at most a polynomial function of $\td(X)$?
\end{ques}

The next problem asks whether \Cref{XMinorFreeProduct} can be extended to the setting of excluded topological minors. 

\begin{ques}
\label{TopoQuestion}
    Is there a function $f$ such that
    for every graph $X$ there exists a function $c$ such that 
    for every positive integer $t$ and
    for every graph $G$ with $\tw(G)<t$ that does not contain $X$ as a topological minor,
    there exists a graph $H$ of treewidth at most $f(\td(X))$ such that 
    $G\subsetsim H\boxtimes K_{c(t)}$?
\end{ques}

This question is related to various results of Campbell~et~al.~\cite{CCDGHHHITTW22} on the underlying treewidth of $X$-topological minor-free graphs. They showed that a monotone class has bounded underlying treewidth if and only if it excludes some fixed topological minor. In particular, they proved the weakening of Question~\ref{TopoQuestion} with $\tw(H)\leq f(\td(X))$ replaced by $\tw(H)\leq |V(X)|$. This is tight for complete graphs. That is, the underlying treewidth of $K_t$-topological minor-free graphs equals $t$ (for $t\geq 5$), which implies Question~\ref{TopoQuestion} for complete graphs $X$. Campbell et al.~\cite{CCDGHHHITTW22} also prove \Cref{TopoQuestion}  for $X=K_{s,t}$ for $s\leq 3$, but note that it is open for $s\geq 4$.
They also prove that the underlying treewidth of $P_k$-free graphs equals $\floor{\log_2k}-1$, which gives good evidence for a positive answer to \Cref{TopoQuestion} since $\td(P_k) = \lceil \log_2(k+1) \rceil$.

A positive answer to \Cref{TopoQuestion} would be a qualitative generalisation of both \Cref{XMinorFreeProduct} and the following result of an anonymous referee of~\cite{Ding_1995} (where $X=K_{1,\Delta+1}$ in \Cref{TopoQuestion}): for every graph $G$ with treewidth $t$ and maximum degree $\Delta$,
there is a tree $T$ such that $G \subsetsim T \boxtimes K_{24 t \Delta}$. 

\begin{ques}
\label{ques:p-cetered}
Is there a function $g$ such that for every graph $X$, 
there is a constant $c$ such that for every $X$-minor-free graph $G$,
$\chi_p(G) \leq c\cdot p^{g(\td(X))}$ for every $p\geq 1$?\end{ques}

Our results give a positive answer to \Cref{ques:p-cetered} when $X$ is apex. However, we do not see a way to adjust our proof techniques and prove an analogue of~\Cref{thm-wcol-main} for $p$-centered colorings when $X$ is an arbitrary graph. The main obstacle is that we do not know how to use chordal partitions to construct $p$-centered colorings. Therefore, we do not know how to set up an equivalent of~\Cref{lemma:structure_for_wcol}.

\begin{ques}
Let $X$ be a graph. 
Let $f(X)$ be the infimum of all the real numbers $c$ such that there is a constant $a$, such that  for every $X$-minor-free graph $G$ and every integer $r \geq 1$, $\wcol_r(G) \leq a \cdot r^c$.
\Cref{thm-wcol-main} and a construction of Grohe~et~al.~\cite{Grohe15} imply that $\tw(X)-1 \leq f(X) \leq g(\td(X))$ for some function $g$. Is $f(X)$ tied to some natural graph parameter of $X$?
Is $f$ tied to some natural graph parameter?
\end{ques}

We know that $f$ is tied to neither $\td$, $\pw$ nor $\tw$. 
For treedepth or pathwidth, consider $X$ to be a complete ternary tree of vertex-height $k$ so both the pathwidth and treedepth of $X$ are $k$. 
Then $X$-minor-free graphs have bounded pathwidth, and it is easy to see that $\wcol_r(G)\leq (\pw(G)+1)(2r+1)$ for all graphs $G$. 
Thus, the exponent is $1$ which is independent of $k$.
For treewidth, consider the family $\set{G_{r,t}}_{r,t\geq 0}$ from~\cite{Grohe15}, which satisfy  $\tw(G_{r,t})\leq t$ and $\wcol_r(G_{r,t})=\Omega(r^t)$. 
Note that $G_{r,t}$ excludes $L_t$ (a ladder with $t$ rungs).
Since $\tw(L_t)\leq 3$ for all $t$, 
when we take $X=L_t$, the exponent becomes $t$ while treewidth remains constant.
The only parameter that we are aware of that could be tied with $f$ is $\td_2$, as defined in~\cite{HuynhJMSW22}.


\bibliographystyle{plain}
\bibliography{biblio}

\end{document}